\colorlet{darkishRed}{red!80!black}
\colorlet{darkishBlue}{blue!60!black}
\colorlet{darkishGreen}{green!60!black}
\renewcommand{\PrintDOI}[1]{\doi{#1}}
\let\setminus=\smallsetminus
\let\setminus=\smallsetminus
\newcommand{\rest}{\upharpoonright}
\renewcommand{\subset}{\subseteq}
\renewcommand{\supset}{\supseteq}
\newcommand{\at}{attached to }
\newcommand{ \N } { \mathbb{N} }
\newcommand{ \Q } { \mathbb{Q} }
\newcommand{\dc}[1]{\lceil #1\rceil}
\newcommand{\uc}[1]{\lfloor #1\rfloor}
\def\calCommandfactory#1{%
   \expandafter\def\csname c#1\endcsname{\mathcal{#1}}}
\def\frakCommandfactory#1{%
   \expandafter\def\csname frak#1\endcsname{\mathfrak{#1}}}
\newcounter{ctr}
  \edef\X{\@Alph\c@ctr}
  \edef\Y{\@alph\c@ctr}
\def\lowfwd #1#2#3{{\mathop{\kern0pt #1}\limits^{\kern#2pt\raise.#3ex
\vbox to 0pt{\hbox{$\scriptscriptstyle\rightarrow$}\vss}}}}
\def\lowbkwd #1#2#3{{\mathop{\kern0pt #1}\limits^{\kern#2pt\raise.#3ex
\vbox to 0pt{\hbox{$\scriptscriptstyle\leftarrow$}\vss}}}}
\def\fwd #1#2{{\lowfwd{#1}{#2}{15}}}
\def\vS{{\hskip-1pt{\fwd S3}\hskip-1pt}}
\def\vE{{\hskip-1pt{\fwd{E}{3.5}}\hskip-1pt}}
\def\vF{{\hskip-1pt{\fwd{F}{3.5}}\hskip-1pt}}
\def\ve{\kern-1.5pt\lowfwd e{1.5}2\kern-1pt}
\def\ev{\kern-1pt\lowbkwd e{0.5}2\kern-1pt}
\def\vf{\kern-2pt\lowfwd f{2.5}2\kern-1pt}
\def\vs{\lowfwd s{1.5}1}
\def\sv{\lowbkwd s{0}1}
\def\vr{\lowfwd r{1.5}2}
\def\rv{\lowbkwd r02}
\def\vSd{{\mathop{\kern0pt S\lower-1pt\hbox{${}
     \scriptstyle'$}}\limits^{\kern2pt\raise.1ex
     \vbox to 0pt{\hbox{$\scriptscriptstyle\rightarrow$}\vss}}}}
\newtheorem{theorem}{Theorem}[section] 
\newtheorem{proposition}[theorem]{Proposition}
\newtheorem{lemma}[theorem]{Lemma}
\newtheorem{problem}[theorem]{Problem}
\newtheorem{mainresult}{Theorem}
\newenvironment{customthm}[1]
  {\innercustomthm}
  {\endinnercustomthm}
\theoremstyle{definition}
\newtheorem{definition}[theorem]{Definition}
\theoremstyle{remark}
\newtheorem*{ack}{Acknowledgement}
\newcommand{\iecon}{infi\-nite\-ly edge-con\-nec\-ted}
\newcommand{\TTT}{T_{\aleph_0}\!\ast t}
\newcommand{\minor}{\preccurlyeq}
\newcommand{\rminor}{\succcurlyeq}
\newcommand{\fcut}{{\mathcal{B}}_{\aleph_0}}
\def\vcB{\lowfwd \cB{1.5}1}
\newcommand{\vfcut}{\vcB_{\mkern-.65\thinmuskip\aleph_0}}
\newcommand{\FG}{F}
\newcommand{\hFG}{\breve{F}}
\newcommand{\con}{/\!\!/}
\newcommand{\fsep}[4]{\bigl(\begin{smallmatrix}#1 & #4\\ #2 & #3\end{smallmatrix}\bigr)}
\begin{document}

\title[Infinite edge-connectivity, the Farey graph and $\TTT$]{Every infinitely edge-connected graph contains the Farey graph or $\boldsymbol{\TTT}$ as a minor}

\author{Jan Kurkofka}
\address{University of Hamburg, Department of Mathematics, Bundesstraße 55 (Geomatikum), 20146 Hamburg, Germany}
\email{jan.kurkofka@uni-hamburg.de}

\begin{abstract}
We show that every \iecon\ graph contains the Farey graph or $\TTT$ as a minor.
These two graphs are unique with this property up to minor-equivalence.
\end{abstract}

\keywords{infinite graph; infinitely edge-connected graph; typical; unavoidable; infinite connectivity; infinite edge-connectivity; Farey graph; infinitely regular tree; T aleph 0 t; graph minor}

\makeatletter
\@namedef{subjclassname@2020}{%
  \textup{2020} Mathematics Subject Classification}
\makeatother
\subjclass[2020]{05C63, 05C55, 05C40, 05C83, 05C10}

\vspace*{-1.9cm}
\maketitle

\vspace*{-1cm}
\noindent\begin{figure}[h]
\centering
\begin{minipage}{.45\textwidth}
  \centering
  \includegraphics[height=.7944\textwidth]{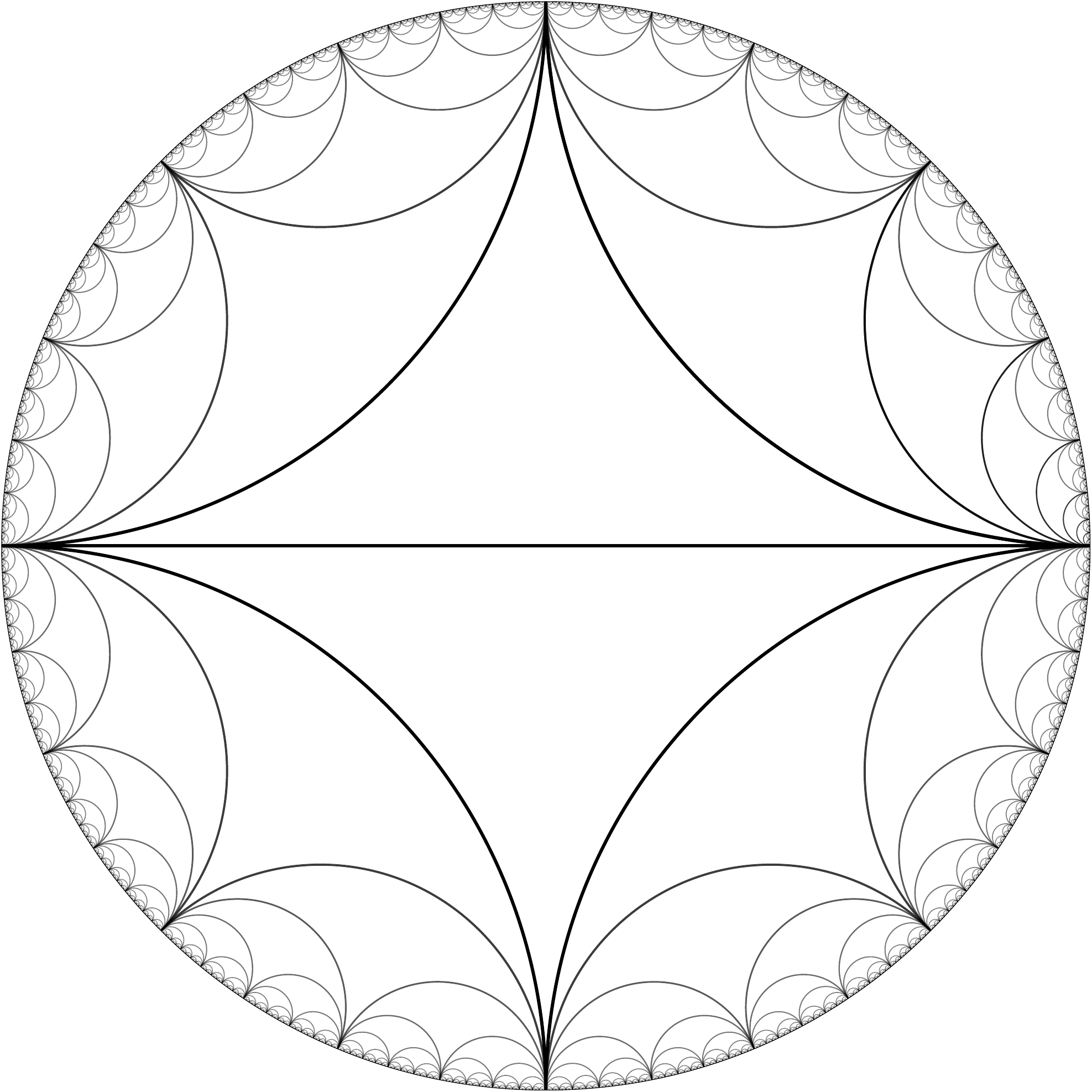}
  \captionof{figure}{The Farey graph}
  \label{fig:FareyGraph}
\end{minipage}%
\begin{minipage}{.55\textwidth}
  \centering
  \includegraphics[height=.65\textwidth]{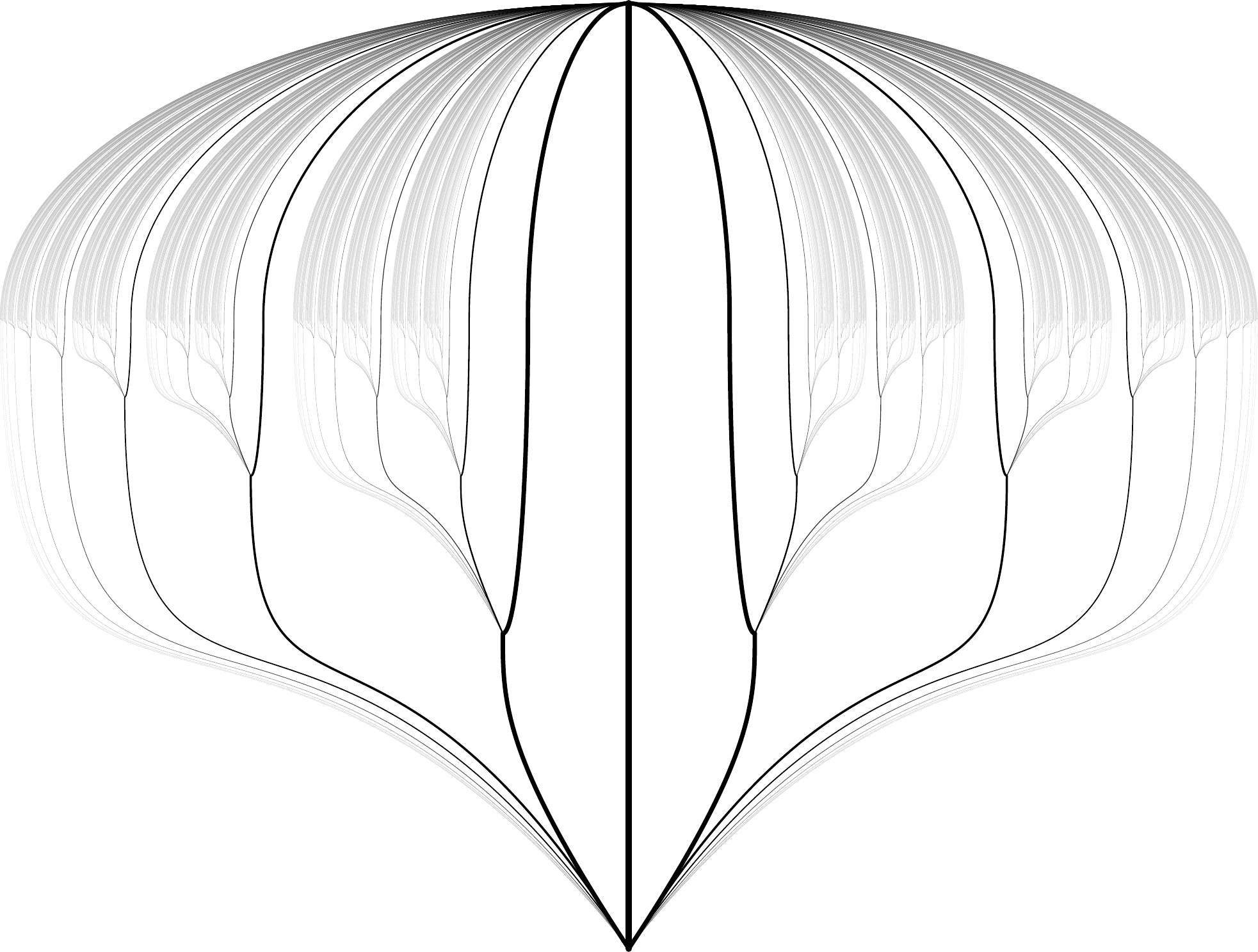}
  \captionof{figure}{The graph $\TTT$}
  \label{fig:TTT}
\end{minipage}
\end{figure}

\section{Introduction}

\noindent The Farey graph, shown in Figure~\ref{fig:FareyGraph} and surveyed in \cites{OfficeHoursGroupTheory,hatcher2017topology}, plays a role in a number of mathematical fields ranging from group theory and number theory to geometry and dynamics~\cite{OfficeHoursGroupTheory}.
Curiously, graph theory is not among these. 
In this paper we show that the Farey graph plays a central role in graph theory too: it is one of two infinitely edge-connected graphs that must occur as a minor in every infinitely edge-connected graph. 
Previously it was not known that there was any set of graphs determining infinite edge-connectivity by forming a minor-minimal list in this way, let alone a finite set.

Ramsey theory and the study of connectivity intersect in the problem of finding for any given connectivity $k$ a small set of $k$-connected subgraphs that occur in every $k$-connected graph, and thereby characterise $k$-connectedness.
To keep these unavoidable sets small for $k\ge 3$, the subgraph relation referred to above is usually relaxed to the graph minor relation. 
Here, a graph is a \emph{minor} of a graph $G$ if it can be obtained from a subgraph of $G$ by contracting connected (possibly infinite) induced disjoint subgraphs~\cite{DiestelBook5}.
We refer to~\cite{DiestelBook5}*{§9.4} or the introduction of~\cite{GollinHeuerKcon} for surveys on the known results for this problem and its variations~\cites{DiestelBook5,GenGridTheorem,GollinHeuerKcon,halin78,JoerisPhD,OporowskiOxleyThomas}.
Such sets of minor-minimal $k$-connected graphs are known only for $k\le 4$, and only for finite graphs~\cite{OporowskiOxleyThomas}.
These results of Oporowski, Oxley and Thomas were generalised to $k>4$ by Geelen and Joeris~\cite{GenGridTheorem} for finite graphs, and by Gollin and Heuer~\cite{GollinHeuerKcon} for infinite graphs, but with a different notion of connectivity.

For infinite connectivity, the problem asks for a small selection of infinitely connected graphs such that every infinitely connected graph contains at least one of the selected graphs as a minor.
Here, `infinitely connected' can be understood in two ways.
When it is understood as `infinitely vertex-connected', the answer is already known:
Every infinitely connected graph contains the countably infinite complete graph $K^{\aleph_0}$ as a minor~\cite{DiestelBook5}*{§8.1}.
But when `infinitely connected' is understood as `infinitely edge-connected' then, as we shall see, $K^{\aleph_0}$ is not the answer, and in fact no answer has been known.
Indeed it is not even clear a priori that there exists a finite set of unavoidable \iecon\ minors.
Any such unavoidable \iecon\ minors will be countable, because in every \iecon\ graph we can greedily find a countable \iecon\ subgraph.
But the countable graphs are not known to be well-quasi-ordered by the minor-relation.
It is therefore not clear that any minor-minimal set of \iecon\ graphs must be finite, nor even that such a minimal set exists.

In this paper we find a pair of \iecon\ graphs that occur unavoidably as minors in any \iecon\ graph, and which are unique with this property up to minor-equivalence: the Farey graph $F$, and the graph $\TTT$ obtained from the infinitely-branching tree $T_{\aleph_0}$ by joining an additional vertex $t$ to all its vertices (Figure~\ref{fig:TTT}).

\begin{mainresult}\label{mainresult}
Every \iecon\ graph contains either the Farey graph or $\TTT$ as a minor.
\end{mainresult}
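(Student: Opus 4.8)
Since, as noted above, every \iecon\ graph contains a \emph{countable} \iecon\ subgraph — greedily, using that between any finitely many vertices one can always route a fresh system of edge-disjoint paths — we may assume $G$ is countable; moreover every vertex of $G$ has infinite degree, since otherwise the edges at a finite-degree vertex would form a finite edge-cut with both sides non-empty. The plan is then to study the finite-order \emph{vertex} separations of $G$ — which genuinely occur, as both target graphs show — by means of a \CTD\ $(T,\cV)$ supplied by the structure theory above, chosen with small adhesion and with torsos admitting no further such splitting. Infinite edge-connectivity is not inherited by the torsos; it survives only through the infinite degrees, so every vertex lying in a small part must lie in the adhesion of infinitely many edges of $T$ and hence ``spread along'' an infinite subtree of $T$. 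The argument is a case analysis of how these subtrees, and the branching of $T$ itself, interact.

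\textbf{The dichotomy.} A Ramsey-type argument, driven by the star--comb lemma applied inside $T$ and inside the parts, should split into two cases. In the first, some connected piece $W\subset G$ — coming from a dominating subtree of $T$, or from a vertex that is \eto\ in the sense of the earlier sections — meets infinitely many components of $G-W$, each of which, by the same analysis applied within it, again branches infinitely and is attached to $W$; iterating this through $T$ with the right disjointness bookkeeping contracts $G$ onto a copy of $T_{\aleph_0}$ with $W$ adjacent to every branch set, i.e.\ onto $\TTT$, giving $G\rminor\TTT$. In the second case no vertex or piece captures infinitely much of the branching, so the relevant part of $T$ is locally tame; one then starts from a single adhesion pair $\{u,v\}$, uses that $\{u,v\}$ is never an edge-cut of $G$ — so its two sides are re-linked through further vertices — and carves out recursively, now only binary-branching because infinite concentration is unavailable, a branch-set model of the iterated-triangulation (Stern--Brocot) construction of the Farey graph: at each step the current pair is triangulated by a fresh branch set and the two new sides are handed to the recursion along a fixed copy of the construction tree. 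The $\omega$-step limit is a minor model of $F$, so $G\rminor F$.

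\textbf{The main obstacle.} The step I expect to be hardest is the second case: extracting \emph{precisely} the Farey graph rather than merely ``some tree of triangles''. This requires showing that failure of the first case really does force the recursion to continue at every node with enough unspent infinite edge-connectivity on each side, so that it never gets stuck — this is where the compactification machinery and the notions of \eto\ and \asi\ graphs should earn their keep — and then choosing the branch sets along a fixed copy of the Farey tree so that the transfinite limit is connected and realizes exactly the Farey adjacencies and no fewer. One must also check that the two cases are exhaustive for every \iecon\ $G$, i.e.\ that ``the binary recursion can always be continued'' and ``there is a dominating piece'' leave no gap between them. Making the $\omega$-step bookkeeping converge while keeping this dichotomy clean is, I expect, the bulk of the work; the first case, though also recursive, should be the more routine of the two.
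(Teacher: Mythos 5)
Your proposal is an outline rather than a proof, and the two places where you yourself flag the real work --- establishing that the dichotomy is exhaustive, and making the binary recursion for the Farey graph never get stuck --- are precisely where the paper's substance lies; neither case of your sketch closes the gap. More concretely, your starting point is off: you propose to analyse finite-order \emph{vertex} separations via a tree-decomposition of small adhesion (the ``\CTD'' and the ``\eto/\asi'' machinery you invoke are not developed anywhere in this paper --- those are stray macros), but for an edge-connectivity statement the right primitive is the finite \emph{bond}. The paper quotients each relevant subgraph $C$ by the relation ``not separable by a finite edge cut'', takes a finitely separating spanning tree of the quotient $\tilde C$ (Theorem~\ref{finSepSTsExist}), and reads this as an $S$-tree over $\fcut(C)$; the \iecon\ ``parts'' are the classes of that relation, not torsos. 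The actual dichotomy, proved for a component $C$ of $G-u-v$, is then: either that spanning tree contains a subdivision of the infinite binary tree, in which case a cofinality argument about which of $u,v$ attaches to which side of the bonds, together with Lemma~\ref{Taleph0BranchSetConstructionFragment} to make the parts of infinite stars connected, yields $\TTT$ (Lemma~\ref{fcutTreeContainsBinaryGivesTTT}); or the tree is recursively prunable, and one extracts either a ``football'' (a minor $H$ containing $u,v$ with $H-u-v$ still \iecon) or an ``arrow barrage'', and iterating arrow barrages again produces $\TTT$ (Theorem~\ref{PowerHorse}). Your first case asserts that a dominating piece meeting infinitely many infinitely-branching components ``contracts onto $\TTT$'', but this skips exactly the disjointness and connectivity bookkeeping that Lemma~\ref{fcutTreeContainsBinaryGivesTTT} exists to do, and your two cases are not shown to be complementary.

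For the Farey half, ``no infinite concentration, hence binary branching'' is not the mechanism. What the paper actually needs at each step is a \emph{plow}: two \iecon\ graphs overlapping in a single vertex that is joined to the two current endvertices. To get it, one first uses that excluding $\TTT$ excludes $TK^{\aleph_0}$, hence (Lemma~\ref{ABgiveS}) there is a finite vertex set $S$ separating suitable representatives of $u$ and $v$; then Lemma~\ref{HuAndHv} carves out induced subgraphs $H_u,H_v$ meeting in a finite \emph{connected} set $X$ with both $H_u/X$ and $H_v/X$ \iecon\ --- this contracted $X$ is the new triangle vertex, and the two halves being again \iecon\ and $\TTT$-free is what lets the recursion continue. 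The robustness theorem (footballs) is the input that makes $S$ exist and the halves nondegenerate. Without these lemmas your recursion has no reason not to get stuck after one step, which is the gap you acknowledge but do not fill; the limit step itself (Lemma~\ref{limitMinor}) is the only routine part.
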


\noindent The uniqueness of the pair $\{\mkern+.5\thinmuskip F,\,\TTT\,\}$, up to minor-equivalence, follows from the fact that they are not minors of each other (Lemmas~\ref{FareyGraphNotMinorOfTTT} and~\ref{TTTnotMinorOfFarey}):

\begin{mainresult}\label{mainresultTwo}
Let $M$ be any set of \iecon\ graphs such that every \iecon\ graph has a minor in~$M$ and no element of $M$ is a minor of another.
Then $M$ consists of two graphs, of which one is minor-equivalent to the Farey graph and the other is minor-equivalent to~$\TTT$.
\end{mainresult}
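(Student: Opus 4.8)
The plan is to deduce Theorem~\ref{mainresultTwo} entirely from Theorem~\ref{mainresult} together with the two separating facts $F\not\minor\TTT$ and $\TTT\not\minor F$ supplied by Lemmas~\ref{FareyGraphNotMinorOfTTT} and~\ref{TTTnotMinorOfFarey}, using also that $F$ and $\TTT$ are themselves \iecon. Throughout, recall that every element of $M$ is \iecon\ by hypothesis, so Theorem~\ref{mainresult} applies to each of them; and since $F$ is \iecon, the covering property of $M$ in particular makes $M$ non-empty.

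First I would extract the two expected members of $M$. Since $F$ is \iecon, the covering property of $M$ yields an element $M_F\in M$ with $M_F\minor F$; likewise there is $M_T\in M$ with $M_T\minor\TTT$. Next I would identify these up to minor-equivalence. Applying Theorem~\ref{mainresult} to the \iecon\ graph $M_F$ gives $F\minor M_F$ or $\TTT\minor M_F$; in the second case transitivity would give $\TTT\minor M_F\minor F$, contradicting Lemma~\ref{TTTnotMinorOfFarey}, so $F\minor M_F$, and with $M_F\minor F$ this shows $M_F$ is minor-equivalent to $F$. Symmetrically, Theorem~\ref{mainresult} applied to $M_T$ gives $F\minor M_T$ or $\TTT\minor M_T$, the first of which would yield $F\minor M_T\minor\TTT$ against Lemma~\ref{FareyGraphNotMinorOfTTT}; hence $\TTT\minor M_T$ and $M_T$ is minor-equivalent to $\TTT$. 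In particular $M_F\ne M_T$: if they were equal, $F$ and $\TTT$ would both be minor-equivalent to this single graph and hence to each other, again contradicting either lemma.

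Finally I would show there is nothing else in $M$. Let $N\in M$ be arbitrary. Since $N$ is \iecon, Theorem~\ref{mainresult} gives $F\minor N$ or $\TTT\minor N$. If $F\minor N$, then $M_F\minor F\minor N$, so $M_F\minor N$ with $M_F,N\in M$; as no element of $M$ is a minor of a \emph{different} element of $M$, this forces $N=M_F$. If instead $\TTT\minor N$, then $M_T\minor\TTT\minor N$ gives $N=M_T$ in the same way. Hence $N\in\{M_F,M_T\}$, and since $M_F$ and $M_T$ are two distinct members of $M$ with $M_F$ minor-equivalent to $F$ and $M_T$ minor-equivalent to $\TTT$, we conclude $M=\{M_F,M_T\}$ as claimed.

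I do not expect a genuine obstacle in this deduction: the real content of Theorem~\ref{mainresultTwo} is imported, namely Theorem~\ref{mainresult} (the hard part, proved earlier) and the two lemmas separating $F$ and $\TTT$ in the minor quasi-order, and what remains is bookkeeping with transitivity of $\minor$. The only point to treat with a little care is the reading of ``no element of $M$ is a minor of another'': it must refer to \emph{distinct} elements, so that the trivial instances $M_F\minor M_F$ and $M_T\minor M_T$ do not spuriously violate the hypothesis.
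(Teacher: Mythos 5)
Your proof is correct and uses exactly the ingredients the paper does (Theorem~\ref{mainresult}, Lemmas~\ref{FareyGraphNotMinorOfTTT} and~\ref{TTTnotMinorOfFarey}, transitivity of $\minor$, and the antichain hypothesis); the paper phrases the same argument slightly more abstractly, showing minor-equivalence induces a bijection between $M$ and $\{F,\TTT\}$, whereas you concretely exhibit the two representatives $M_F,M_T$ and then show every $N\in M$ equals one of them. Your closing caveat about reading ``no element is a minor of another'' as referring to distinct elements matches the paper's intended reading.
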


Theorem~\ref{mainresult} is best possible also in the sense that one cannot replace `minor' with `topological minor' in its wording (Theorem~\ref{mainresultBestPossible}).

Since both the Farey graph and $\TTT$ are planar, our result implies that every \iecon\ graph contains a planar \iecon\ graph as a minor.
Thus, in this sense, infinite edge-connectivity is an inherently planar property.

This paper is organised as follows. 
Section~\ref{sec:Preliminaries} formally introduces the Farey graph.
In Section~\ref{sec:Antichain} we show that the Farey graph and $\TTT$ are not minors of each other, and deduce Theorem~\ref{mainresultTwo}.
Theorem~\ref{mainresultBestPossible} above is proved there as well.
We outline the overall strategy of the proof of Theorem~\ref{mainresult} in Section~\ref{sec:Proofstrategy}.
The proof itself consists of two halves. The first half of the proof is carried out in Section~\ref{sec:HalfOne}, and the second half is carried out in Section~\ref{sec:HalfTwo}.
Section~\ref{sec:Outlook} gives an outlook, and Section~\ref{sec:appendix} contains the appendix.

\begin{ack}
I am grateful to Konstantinos Sta\-vro\-pou\-los for stimulating conversations.
\end{ack}

\section{Preliminaries}\label{sec:Preliminaries}

\noindent We use the notation of Diestel's book~\cite{DiestelBook5}. 
Two graphs are \emph{minor-equivalent} if they are minors of each other.
If $G$ is any graph and $X\subset V(G)$ is any vertex set, then we denote by $\partial X=\partial_G X$ the subset of $X$ formed by the vertices in $X$ that send an edge in $G$ to $V(G)\setminus X$.

\subsection{The Farey graph}
The \emph{Farey graph} $F$ is the graph on $\Q\cup\{\infty\}$ in which two rational numbers $a/b$ and $c/d$ in lowest terms (allowing also $\infty=(\pm 1)/0$) form an edge if and only if $\det\bigl( \begin{smallmatrix}a & c\\ b & d\end{smallmatrix}\bigr)=\pm 1$, cf.~\cite{OfficeHoursGroupTheory}.
In this paper we do not distinguish between the Farey graph and the graphs that are isomorphic to it.
For our graph-theoretic proofs it will be more convenient to work with the following purely combinatorial definition of the Farey graph that is indicated in~\cite{OfficeHoursGroupTheory} and~\cite{hatcher2017topology}.

The \emph{halved Farey graph} $\hFG_0$ of order $0$ is a $K^2$ with its sole edge coloured blue.
Inductively, the \emph{halved Farey graph} $\hFG_{n+1}$ of order $n+1$ is the edge-coloured graph that is obtained from $\hFG_n$ by adding a new vertex $v_e$ for every blue edge $e$ of $\hFG_n$, joining each $v_e$ precisely to the endvertices of $e$ by two blue edges, and colouring all the edges of $\hFG_n\subset\hFG_{n+1}$ black.
The \emph{halved Farey graph} $\hFG:=\bigcup_{n\in\N}\hFG_n$ is the union of all $\hFG_n$ without their edge-colourings, and the \emph{Farey graph} is the union $F=G_1\cup G_2$ of two copies $G_1,G_2$ of the halved Farey graph such that $G_1\cap G_2=\hFG_0$.

\begin{lemma}\label{FareyGisMinorOfHalvedFareyG}
The halved Farey graph and the Farey graph are minor-equivalent.
\end{lemma}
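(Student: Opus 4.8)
The plan is to prove the two minor relations separately, as one of them is trivial: the halved Farey graph is (isomorphic to) the subgraph $G_1$ of $F=G_1\cup G_2$, and every subgraph is a minor, so $\hFG$ is a minor of $F$. The content of the lemma is the converse --- that a \emph{single} copy of the halved Farey graph already contains the whole Farey graph as a minor --- and here I would exploit the self-similarity of $\hFG$.

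First I would record that self-similarity precisely. Write $\hFG_1$ as the triangle on $\{a,b,c\}$ with $\hFG_0=ab$ and $c=v_{ab}$, so that $ac$ and $bc$ are exactly the two blue edges of $\hFG_1$. For $n\ge 1$ let $H_1^{(n)}$ be the subgraph of $\hFG_n$ consisting of the edge $ac$ together with all vertices and edges introduced, from $\hFG_2$ onwards, by the recursion started from $ac$, and define $H_2^{(n)}$ analogously from $bc$. A routine induction on $n$ then shows that $\hFG_n$ arises from $H_1^{(n)}\cup H_2^{(n)}$ by adding the edge $ab$, that $V(H_1^{(n)})\cap V(H_2^{(n)})=\{c\}$ and $E(H_1^{(n)})\cap E(H_2^{(n)})=\emptyset$, and that $H_i^{(n)}$ together with the colouring inherited from $\hFG_n$ is a copy of $\hFG_{n-1}$ in which its root edge ($ac$ or $bc$) plays the role of $\hFG_0$. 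Passing to unions over $n$ yields subgraphs $H_1,H_2\subseteq\hFG$ such that $\hFG$ arises from $H_1\cup H_2$ by adding the edge $ab$, such that $V(H_1)\cap V(H_2)=\{c\}$ and $E(H_1)\cap E(H_2)=\emptyset$, and such that there are isomorphisms $H_i\cong\hFG$ carrying the root edge of $H_i$ onto $\hFG_0$.

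Now recall that $F=G_1\cup G_2$ with $G_1\cong G_2\cong\hFG$ and $G_1\cap G_2=\hFG_0$, say on the two vertices $x$ and $y$. Composing $G_i\cong\hFG$ with the isomorphism from the previous paragraph, I fix isomorphisms $\phi_1\colon G_1\to H_1$ and $\phi_2\colon G_2\to H_2$ with $\phi_1(x)=a$, $\phi_2(x)=b$ and $\phi_1(y)=\phi_2(y)=c$; this is possible because $\{x,y\}=V(\hFG_0)$ and $\hFG_0=K^2$ admits the transposition of its two vertices. I claim that
\[
B_x:=\{a,b\},\qquad B_y:=\{c\},\qquad B_w:=\{\phi_i(w)\}\ \text{ for } w\in V(G_i)\setminus\{x,y\}\ (i=1,2)
\]
defines a minor model of $F$ in $\hFG$, i.e.\ a family of pairwise disjoint connected vertex sets indexed by $V(F)$ with an edge of $\hFG$ between $B_p$ and $B_q$ whenever $pq\in E(F)$. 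Indeed, $\hFG[B_x]$ contains the edge $ab$ and is hence connected, while all other branch sets are singletons; pairwise disjointness follows from the injectivity of $\phi_1$ and $\phi_2$ together with $V(H_1)\cap V(H_2)=\{c\}$ and $a,b\notin\{c\}$; and every edge of $F$ lies in $G_1$ or in $G_2$, say in $G_i$, whence an edge $pq$ of $G_i$ is mapped by $\phi_i$ to an edge $\phi_i(p)\phi_i(q)$ of $H_i\subseteq\hFG$ running between $B_p$ and $B_q$, because $\phi_i(x)\in B_x$, $\phi_i(y)\in B_y$, and $\phi_i(w)\in B_w$ for all other $w$. Thus $F$ is a minor of $\hFG$, and together with the first paragraph the lemma follows.

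I do not expect a real obstacle here; the only step requiring care is the bookkeeping in the second paragraph, in particular the verification that $c$ is the \emph{only} vertex shared by the two halves $H_1$ and $H_2$ of $\hFG$. It is precisely this disjointness that allows the two halves $G_1$ and $G_2$ of $F$ to be modelled separately inside $H_1$ and $H_2$, interacting only through the two shared branch vertices $B_x=\{a,b\}$ and $B_y=\{c\}$ that take over the role of $\hFG_0$.
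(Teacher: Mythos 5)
Your proof is correct and is essentially the paper's argument: the paper also contracts $V(\hFG_0)=\{a,b\}$ into a single branch set and uses the self-similarity of $\hFG$ at the two blue edges of $\hFG_1$, stating the result compactly as $(\hFG-e)/V(\hFG_0)\cong F$ for a blue edge $e$ of $\hFG_1$ (the deletion of $e$ only serves to make the contraction an isomorphism rather than a minor model with a doubled edge). Your more explicit bookkeeping of the branch sets and of the decomposition $\hFG=ab\cup H_1\cup H_2$ is a sound, if lengthier, rendering of the same idea.
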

\begin{proof}
The halved Farey graph is a subgraph of the Farey graph.
Conversely, the Farey graph is a minor of the halved Farey graph: if $e$ is a blue edge of $\hFG_1$, then the Farey graph is the contraction minor of $\hFG-e$ whose sole non-trivial branch set is~$V(\hFG_0)$, i.e., $(\hFG-e)/V(\hFG_0)\cong\FG$.
\end{proof}

We remark that the Farey graph is uniquely determined by its connectivity~\cite{FareyGraphChar}.

\subsection{Separation systems and \texorpdfstring{$\boldsymbol{S}$}{S}-trees}

Separation systems and $S$-trees are two fundamental tools in graph minor theory.
In this section we briefly introduce the definitions from~\cites{AbstractSepSys,DiestelBook5,RhdTreeSets} that we need.

A \emph{separation of a set} $V$ is an unordered pair $\{A,B\}$ such that $A\cup B=V$.
The ordered pairs $(A,B)$ and $(B,A)$ are its \emph{orientations}.
Then the \emph{oriented separations} of $V$ are the orientations of its separations.
The map that sends every oriented separation $(A,B)$ to its \emph{inverse} $(B,A)$ is an involution that reverses the partial ordering
\begin{align*}
    (A,B)\le (C,D)\;:\Leftrightarrow\;A\subset C\text{ and }B\supset D
\end{align*}
since $(A,B)\le (C,D)$ is equivalent to $(D,C)\le (B,A)$.

More generally, a \emph{separation system} is a triple $(\vS,{\le},{}^\ast)$ where $(\vS,{\le})$ is a partially ordered set and ${}^\ast\colon\vS\to\vS$ is an order-reversing involution.
We refer to the elements of $\vS$ as \emph{oriented separations}.
If an oriented separation is denoted by $\vs$, then we denote its \emph{inverse} $\vs^\ast$ as $\sv$, and vice versa.
That ${}^\ast$ is \emph{order-reversing} means $\vr\le\vs\leftrightarrow\rv\ge\sv$ for all $\vr,\vs\in\vS$.

A \emph{separation} is an unordered pair of the form $\{\vs,\sv\}$, and then denoted by $s$.
Its elements $\vs$ and $\sv$ are the \emph{orientations} of $s$.
The set of all separations $\{\vs,\sv\}\subset\vS$ is denoted by $S$.
When a separation is introduced as $s$ without specifying its elements first, we use $\vs$ and $\sv$ (arbitrarily) to refer to these elements.
Every subset $S'\subset S$ defines a separation system $\vSd:=\bigcup S'\subset\vS$ with the ordering and involution induced by~$\vS$.

Separations of sets, and their orientations, are an instance of this abstract setup if we identify $\{A,B\}$ with $\{\,(A,B)\,,(B,A)\,\}$.
Here is another example:
The set $\vE(T):=\{\,(x,y)\mid xy\in E(T)\,\}$ of all \emph{orientations} $(x,y)$ of the edges $xy=\{x,y\}$ of a tree $T$ forms a separation system with the involution $(x,y)\mapsto (y,x)$ and the natural partial ordering on $\vE(T)$ in which $(x,y)<(u,v)$ if and only if $xy\neq uv$ and the unique $\{x,y\}$--$\{u,v\}$ path in $T$ is $\mathring{x}yTu\mathring{v}=yTu$. 

In the context of a given separation system $(\vS,{\le},{}^\ast)$, a \emph{star (of separations)} is a subset $\sigma\subset\vS$ such that $\vr\le\sv$ for all distinct $\vr,\vs\in\sigma$; see~\cite{DiestelBook5}*{Fig.~12.5.1} for an illustration.\footnote{Officially, in~\cite{AbstractSepSys} a star $\sigma$ is additionally required to consist only of oriented separations $\vs$ satisfying $\vs\neq\sv$. In this paper, however, all separations considered will satisfy this condition, which is why we drop it here.}
If $t$ is a node of a tree $T$, then the set
\begin{align*}
    \vF_{\! t}:=\{\,(x,t)\mid xt\in E(T)\,\}
\end{align*}
is a star in $\vE(T)$.

An $S$-\emph{tree} is a pair $(T,\alpha)$ such that $T$ is a tree and $\alpha\colon\vE(T)\to\vS$ propagates the ordering on $\vE(T)$ and commutes with inversion: that $\alpha(\ve)\le\alpha(\vf)$ if $\ve\le\vf\in\vE(T)$ and $(\alpha(\ev))^\ast=\alpha(\ve)$ for all $\ve\in \vE(T)$; see \cite{DiestelBook5}*{Fig.~12.5.2} for an illustration.
Thus, every node $t\in T$ is associated with a star $\vF_{\! t}$ in $\vE(T)$ which $\alpha$ sends to a star $\alpha[\vF_{\! t}]$ in~$\vS$.
A tree-decomposition $(T,\cV)$, for example, makes $T$ into an $S$-tree for the set of separations it induces~\cite{DiestelBook5}*{§12.5}.
For oriented edges $(x,y)\in\vE(T)$ we will write $\alpha(x,y)$ instead of $\alpha((x,y))$.
Note that $S$-trees are `closed under taking minors' in the sense that if $(T,\alpha)$ is an $S$-tree and $T'\minor T$, then $(\,T',\,\alpha\rest \vE(T')\,)$ is again an $S$-tree when we view $E(T')$ as a subset of $E(T)$.

\section{Uniqueness and topological minors}\label{sec:Antichain}

\subsection{Uniqueness}

\noindent In this section we show that the pair $\{\mkern+.5\thinmuskip F,\,\TTT\,\}$ is unique up to minor-equivalence:

\begin{customthm}{\ref{mainresultTwo}}
Let $\cH$ be any set of \iecon\ graphs such that every \iecon\ graph has a minor in~$\cH$ and no element of $\cH$ is a minor of another.
Then $\cH$ consists of two graphs, of which one is minor-equivalent to the Farey graph and the other is minor-equivalent to~$\TTT$.
\end{customthm}

\noindent This will follow easily from the following two lemmas:

\begin{lemma}\label{FareyGraphNotMinorOfTTT}
The Farey graph is not a minor of $\TTT$.
\end{lemma}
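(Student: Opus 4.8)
The plan is to exhibit a structural property of $\TTT$ that is preserved under taking minors but fails for the Farey graph. The natural candidate is that $\TTT$ has a vertex whose deletion leaves a graph of bounded tree-width — in fact, deleting the apex vertex $t$ from $\TTT$ leaves the tree $T_{\aleph_0}$, which has tree-width $1$. I would therefore prove the contrapositive statement in the form: if $H$ is a minor of $\TTT$, then $H$ has a vertex $x$ such that $H-x$ has tree-width at most $1$, i.e.\ $H-x$ is a forest. Since the Farey graph visibly has no such vertex — it is $2$-connected, and for every vertex $v$ the graph $F-v$ still contains arbitrarily large cycles (each $\hFG_n$ with $n$ large contributes triangles far from $v$), so $F-v$ is never a forest — this yields the lemma.

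First I would set up the minor via branch sets: let $(B_h)_{h\in V(H)}$ be disjoint connected vertex sets in $\TTT$, one for each vertex of $H$, with an edge of $H$ between $h$ and $h'$ witnessed by a $\TTT$-edge between $B_h$ and $B_{h'}$. Exactly one branch set, say $B_{x}$, contains the apex $t$; for every other $h$ we have $B_h\subset V(T_{\aleph_0})$, so $B_h$ is a subtree of $T_{\aleph_0}$. Then I would argue that $H-x$ is a forest. Suppose for contradiction that $H-x$ contains a cycle $h_1h_2\cdots h_kh_1$. The union $B_{h_1}\cup\cdots\cup B_{h_k}$ together with the witnessing edges between consecutive branch sets is a connected subgraph of $T_{\aleph_0}$ that contains a cycle (contract each $B_{h_i}$ to a point to see the cycle survives), contradicting that $T_{\aleph_0}$ is a tree. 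Hence $H-x$ is acyclic, as claimed.

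To finish, I would verify the two easy facts about the Farey graph. It suffices to show $F-v$ contains a cycle for every vertex $v$: pick $n$ large enough that some triangle $v_e$-plus-endpoints introduced at stage $n$ in one of the two halved copies avoids $v$ — this is possible because at each stage infinitely many new triangles are created and only finitely many of the relevant vertices can coincide with $v$ — so $F-v$ is not a forest, whereas by the previous paragraph every minor $H$ of $\TTT$ has a vertex $x$ with $H-x$ a forest. Since $F$ is itself one of its own minors, $F$ cannot be a minor of $\TTT$.

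I expect the only genuinely delicate point to be the bookkeeping that the cycle in $H-x$ really forces a cycle in $T_{\aleph_0}$: one must be slightly careful that the witnessing edges used along the cycle are genuine $T_{\aleph_0}$-edges (not edges incident to $t$), which holds precisely because none of $B_{h_1},\dots,B_{h_k}$ contains $t$. Everything else is routine; in particular the claim about $F$ is immediate from the inductive construction of $\hFG$, which keeps spawning triangles indefinitely.
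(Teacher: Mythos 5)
Your proof is correct, but it takes a different route from the paper's, which is a one-liner: the Farey graph contains two disjoint cycles, whereas $\TTT$ does not (every cycle of $\TTT$ must pass through $t$, since $T_{\aleph_0}$ is a tree), and containing two disjoint cycles is preserved under taking minors. Your invariant is instead the apex-forest structure: every minor $H$ of $\TTT$ has a vertex $x$ with $H-x$ a forest, while $F-v$ always contains a triangle. Both arguments exploit the same underlying fact that all cycles of $\TTT$ go through $t$, but the paper's packaging avoids all branch-set bookkeeping, whereas yours proves a stronger structural statement about \emph{every} minor of $\TTT$ (not just that $F$ is not one), at the cost of the lifting argument. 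One small inaccuracy in your write-up: it is not true that ``exactly one'' branch set contains $t$ --- possibly none does, in which case $H$ is a minor of the tree $T_{\aleph_0}$ and hence itself a forest, so your conclusion holds trivially; with that case added, the argument is complete.
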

\begin{proof}
The Farey graph contains two disjoint cycles, but $\TTT$ does not.
\end{proof}

\begin{lemma}\label{TTTnotMinorOfFarey}
The graph $\TTT$ is not a minor of the Farey graph.
\end{lemma}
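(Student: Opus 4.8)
The plan is to exhibit one finite graph that is a minor of $\TTT$ but not of the Farey graph; transitivity of $\minor$ then gives $\TTT\not\minor\FG$. I would take this graph to be $K_{2,3}$.

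That $K_{2,3}$ is a minor of $\TTT$ is immediate: pick the root $r$ of $T_{\aleph_0}$ and three of its children $c_1,c_2,c_3$; then $\{t,r\}$ together with $\{c_1,c_2,c_3\}$ spans a copy of $K_{2,3}$ in $\TTT$ (the extra edge $tr$ is harmless). The real content is that $K_{2,3}$ is \emph{not} a minor of $\FG$, and I would derive this from the stronger assertion that every finite subgraph of $\FG$ is outerplanar. Granting this, the conclusion runs as follows: any hypothetical $K_{2,3}$-minor of $\FG$, after each branch set is cut down to a finite subtree joining the finitely many vertices that carry the witnessing edges, would be a $K_{2,3}$-minor of some finite subgraph of $\FG$; but that subgraph is outerplanar, outerplanarity is preserved under subgraphs and under minors, and $K_{2,3}$ is not outerplanar --- a contradiction.

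It remains to verify that every finite subgraph of $\FG$ is outerplanar. Since any finite subgraph of $\FG=G_1\cup G_2$ is contained, for $n$ large enough, in the graph obtained from two copies of $\hFG_n$ by identifying their copies of $\hFG_0$, it suffices to draw that graph outerplanarly. I would do this by induction along the construction in Section~\ref{sec:Preliminaries}: place the two vertices of $\hFG_0$ on a circle with the edge drawn as a chord, and when passing from $\hFG_n$ to $\hFG_{n+1}$ put each new vertex $v_e$ (for a blue edge $e$) on the circular arc between the endvertices of $e$, drawing its two new edges as chords. The invariant to maintain is that each blue edge of $\hFG_n$ joins two circle-consecutive vertices, so that this arc is free of earlier vertices and $v_e$ can be inserted without a crossing; this is clear by induction, since the construction only ever subdivides the blue edges, which sit on the boundary. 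For the glued graph one uses the two arcs cut off by the shared copy of $\hFG_0$, placing one copy of $\hFG_n$ on each.

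I expect this concluding induction --- checking that the insertions never create crossings and that all vertices stay on one face --- to be the only slightly delicate step, and it is entirely routine. The remaining ingredients (transitivity of $\minor$; that $K_{2,3}$ is one of the two minor-obstructions to outerplanarity; that outerplanarity passes to subgraphs and to minors; and that a finite minor of a graph is already a minor of a finite subgraph) are all standard.
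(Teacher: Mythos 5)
Your proposal is correct, and it takes a genuinely different route from the paper. The paper also argues via a bipartite obstruction, but an infinite one: it observes $K_{2,\aleph_0}\subset\TTT$ and then shows that the Farey graph has no $K_{2,\aleph_0}$ minor by a hands-on analysis — the star-comb lemma reduces a hypothetical minor to a subdivided ladder (or its contracted variant) sitting along the unique end of $F$ that contains it, and a careful study of the blue edges $e_n$ converging to that end, the triangles $\Delta_n$ they span and the components $C_n,D_n,H_n$ of their deletion produces a contradiction. You instead use the finite obstruction $K_{2,3}\subset\TTT$ and the observation that every finite subgraph of $F$ is outerplanar, which your induction on the recursive construction (the invariant that blue edges always join circle-consecutive vertices, so each new vertex sits in an empty arc and its two chords lie in a nested lens) establishes correctly; together with the standard reduction of a finite minor to a minor of a finite subgraph and the fact that $K_{2,3}$ is a forbidden minor for outerplanarity, this gives the lemma. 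Your argument is shorter and proves a strictly stronger fact (no $K_{2,3}$ minor at all, which implies no $K_{2,\aleph_0}$ minor), at the price of importing the outerplanarity machinery for finite graphs and the geometric drawing of $F$; the paper's longer argument stays within the infinite-graph toolkit it has already set up (star-comb lemma, ends, domination) and incidentally develops the end-structure picture of the Farey graph that resurfaces in its later sections.
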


\begin{proof}[Proof of Theorem~\ref{mainresultTwo}]
We write $\cG=\{\mkern+.5\thinmuskip F,\,\TTT\,\}$ and note that neither element of $\cG$ is a minor of another by Lemmas~\ref{FareyGraphNotMinorOfTTT} and~\ref{TTTnotMinorOfFarey}.
Every graph $H\in\cH$ contains a graph $G\in\cG$ as a minor (Theorem~\ref{mainresult}) which in turn contains a graph $H'\in\cH$ as a minor, and then $H\rminor G\rminor H'$ implies $H=H'$ because no element of $\cH$ is a minor of another.
Thus, every graph in $\cH$ is minor-equivalent to some graph in $\cG$ and, conversely, every graph in $\cG$ is minor-equivalent to some graph in $\cH$ by symmetry.
Since no two graphs in $\cH$ or in $\cG$ are comparable with regard to the minor-relation, we deduce that minor-equivalence induces a bijection between $\cH$ and $\cG$.
\end{proof}

Showing that $\TTT$ is not a minor of the Farey graph requires more effort, and some preparation.
A \emph{comb} is the union of a ray $R$ (the comb's \emph{spine}) with infinitely many disjoint finite paths, possibly trivial, that have precisely their first vertex on~$R$. 
The last vertices of those paths are the \emph{teeth} of this comb.
Given a vertex set $U$, a \emph{comb attached to} $U$ is a comb with all its teeth in $U$, and a \emph{star attached to} $U$ is a subdivided infinite star with all its leaves in $U$.
The following lemma is~\cite{DiestelBook5}*{Lemma~8.2.2}, see also the series~\cites{StarComb1StarsAndCombs,StarComb2TheDominatedComb,StarComb3TheUndominatedComb,StarComb4TheUndominatingStar}.

\begin{lemma}[Star-comb lemma]
Let $U$ be an infinite set of vertices in a connected graph $G$.
Then $G$ contains either a comb \at $U$ or a star \at $U$.
\end{lemma}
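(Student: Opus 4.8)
The plan is to reduce the lemma to the case of trees and then run the standard tree-order argument. Since $G$ is connected it has a spanning tree $T$, and a comb or star attached to $U$ inside $T$ is in particular one inside $G$; so I would work in $T$ from the outset. Fix a root $r\in U$ (possible since $U\neq\emptyset$), inducing a tree-order on $V(T)$ with least element $r$, and for $t\in V(T)$ write $D_t$ for the set of vertices $\ge t$, so that $t\in D_t$. The object to focus on is the minimal subtree $T'\subseteq T$ containing $U$, which is precisely $T\bigl[\{\,t : D_t\cap U\neq\emptyset\,\}\bigr]$. Two observations are then immediate and will drive everything: every leaf of $T'$ lies in $U$ (a leaf has no child in $T'$, so the only vertex of its $D_t$ that can lie in $U$ is $t$ itself), and $T'$ is infinite (it contains $U$).

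Next I would split according to the usual dichotomy for infinite trees. If $T'$ has a vertex $t$ of infinite degree, then $t$ has infinitely many children $c_1,c_2,\dots$ in $T'$; picking $u_i\in D_{c_i}\cap U$ for each $i$, the paths $tTu_i$ pairwise meet only in $t$ because the sets $D_{c_i}$ are pairwise disjoint, so their union is a star attached to $U$ with centre $t$. Otherwise $T'$ is locally finite, and then, being infinite and rooted at $r$, it contains a ray $R=t_0t_1\cdots$ with $t_0=r$ by K\"onig's infinity lemma. The key question is now whether infinitely many $t_i$ either lie in $U$ or have a child in $T'$ other than $t_{i+1}$. If so, then for each such $i$ I pick a finite $t_i$--$U$ path $P_i$ meeting $R$ only in $t_i$ — namely $P_i=\{t_i\}$ if $t_i\in U$, and otherwise $P_i=t_iTu_i$ with $u_i\in D_{c_i}\cap U$ for some child $c_i\neq t_{i+1}$ of $t_i$ in $T'$ — and a short check with the descendant sets shows that the $P_i$ are pairwise disjoint and each touches $R$ only in its first vertex $t_i$, so that $R\cup\bigcup_iP_i$ is a comb attached to $U$.

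It remains to rule out the case that only finitely many $t_i$ are of that kind, and here I would derive a contradiction. Then some $N$ satisfies: for every $i\ge N$ the vertex $t_i$ lies outside $U$ and has $t_{i+1}$ as its only child in $T'$. Tracking where a vertex of $D_{t_i}\cap U$ can sit then forces $D_{t_i}\cap U=D_{t_{i+1}}\cap U$ for all $i\ge N$, whence $D_{t_N}\cap U=\bigcap_{i\ge N}\bigl(D_{t_i}\cap U\bigr)$; but this set is empty, since $\bigcap_{i\ge N}D_{t_i}=\emptyset$ (a vertex has only finitely many vertices below it in the tree-order, so cannot lie above all of the $t_i$), contradicting $t_N\in T'$. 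I expect this last step, together with the verification that the paths $P_i$ in the comb case are genuinely disjoint and meet the spine only at their first vertices, to be the part that needs care: it is just bookkeeping with the descendant sets $D_t$, but it is where the argument actually has content, while the reduction to spanning trees and the appeal to K\"onig's lemma are routine.
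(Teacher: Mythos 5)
Your proof is correct. The paper does not prove this lemma itself but quotes it from \cite{DiestelBook5}*{Lemma~8.2.2}, and your argument is essentially the standard one given there: pass to a spanning tree, take the minimal subtree containing $U$, and split on whether it has a vertex of infinite degree (yielding a star) or is locally finite (yielding a ray by K\"onig's lemma and then a comb via the descendant-set bookkeeping), so there is nothing to add.
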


\begin{proof}[Proof of Lemma~\ref{TTTnotMinorOfFarey}]
Since $K_{2,\aleph_0}$ is a subgraph of $\TTT$, it suffices to show that the Farey graph does not contain $K_{2,\aleph_0}$ as a minor.
So let us assume for a contradiction that the Farey graph contains a $K_{2,\aleph_0}$ minor.
By applying the star-comb lemma inside the branch sets of the two infinite-degree vertices of $K_{2,\aleph_0}$ if necessary, and using that the Farey graph does not contain infinitely many independent paths between any two of its vertices, we find that our model of $K_{2,\aleph_0}$ contains a subdivision $G$ of one of the following two graphs $G_1$ and $G_2$.
The graph $G_1$ is the ladder with every rung subdivided exactly once, i.e., it is the disjoint union of two rays $R=v_1v_2\ldots$ and $R'=v_1'v_2'\ldots$ with infinitely many disjoint $R$--$R'$ paths~$v_n z_n v_n'$ ($n\in\N$).
And the graph $G_2$ is obtained from $G_1$ by contracting $R'$ to a single vertex that we call~$d$.

In either case, the sole end of $G\subset F$ is included in a unique end $\omega$ of $F$.
The end $\omega$ chooses, for every $n\in\N$, a blue edge $e_n\in F_n$ with vertex set $X_n$ for which it lives in the component $C_n$ of $F-X_n$ avoiding $F_n$.
Then $C_n$ has neighbourhood $X_n$, and so does the other component $D_n$ of $F-X_n$.
We remark that, by the construction of the Farey graph, for every vertex of $F$ there is a number $n$ such that the vertex is not contained in $C_n$.
For all $n$ the two vertex sets $X_n$ and $X_{n+1}$ together induce a triangle $\Delta_n$ in $F$.
We write $x_n$ for the vertex in which $X_n$ and $X_{n+1}$ meet, and we write $Y_n$ for vertex set consisting of the other two vertices of the triangle $\Delta_n$.
The graph $F-\Delta_n$ has precisely three components, namely $D_n$ and $C_{n+1}$ and a third component with neighbourhood $Y_n$ which we denote by $H_n$.

First, we consider the case that $G\subset F$ is a subdivision of~$G_1$, and we write $\hat{R}$ and $\hat{R}'$ for the subdivisions of $R$ and $R'$ in~$G$.
Then there cannot be a number $N$ such that $x_n=x_N$ for all $n\ge N$:
Indeed, for every $k\in\N$ there is a number $f(k)\ge k$ such that both $v_k$ and $v_k'$ are not contained in $C_{f(k)}$ and, as a consequence, $x_{f(k)}$ must be contained in $v_k \hat{R}\cup v_k'\hat{R}'$.
Thus, every vertex of $F$ lies in a component $D_n$ eventually (and $\omega$ is undominated).
Let $N$ be the least number for which the first vertices of $\hat{R}$ and $\hat{R}'$ lie in $D_N$.
To derive a contradiction from $G\subset F$, let us consider any $\hat{R}$--$\hat{R}'$ path $P\subset G$ that lies entirely in the component $C_N$, and consider the maximal number $n$ for which $P$ avoids $D_n$, noting $n\ge N$.
Since the two rays $\hat{R}$ and $\hat{R}'$ induce a bipartition of the 2-set $X_{n+1}$, the path $P$ cannot meet $C_{n+1}$ without contradicting the maximality of~$n$.
Therefore, the path $P$ is contained entirely in $F[H_n\sqcup\Delta_n]$.
Without loss of generality we have $x_n\in \hat{R}$.
Then $Y_n\subset \hat{R}'$ follows.
But now the non-empty subpath $\mathring{P}$ must be contained in $H_n$, contradicting that $H_n$ has neighbourhood $Y_n\subset \hat{R}'$.

Second, we consider the case that $G\subset F$ is a subdivision of $G_2$, and again we write $\hat{R}$ for the subdivision of $R$ in $G$.
Since $d\in G_2$ dominates the end of $G_2$, the end $\omega$ is dominated in $F$ by $d$.
Let $N$ be the least number such that both $d$ and the first vertex of $\hat{R}$ are not contained in~$C_N$.
Then $d=x_N=x_n$ for all $n\ge N$ because $d$ dominates $\omega$.
Thus, $Y_n\subset \hat{R}$ for all $n\ge N$.
Now consider any $d$--$\hat{R}$ path $P\subset G$ with $\mathring{d}P\subset C_N$ and choose $n$ maximal with the property that the non-empty subpath $\mathring{P}$ avoids $D_n$, noting $n\ge N$.
Then $\mathring{P}\subset H_n$ follows because of $Y_n\subset \hat{R}$, contradicting that $d=x_n$ does not lie in the neighbourhood $Y_n$ of $H_n$.
\end{proof}

\subsection{Minor versus topological minor}\label{sec:mainresultBestPossible}

\noindent Theorem~\ref{mainresult} is best possible in the sense that one cannot replace `minor' with `topological minor' in its wording:

\begin{theorem}\label{mainresultBestPossible}
There exists an \iecon\ graph that contains neither the Farey graph nor $\TTT$ as a topological minor.
\end{theorem}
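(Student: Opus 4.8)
# Proof Proposal for Theorem~\ref{mainresultBestPossible}

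The plan is to construct an explicit \iecon\ graph whose maximum degree is bounded, which immediately rules out $\TTT$ as a topological minor (since $\TTT$ contains the vertex $t$ of infinite degree, and a subdivision of $\TTT$ would need a vertex of infinite degree as a branch vertex), and then argue separately that the Farey graph cannot appear as a topological minor either. The natural candidate is a graph built from a family of large finite highly-connected pieces glued together along a tree-like skeleton: for instance, take the infinitely-branching tree $T_{\aleph_0}$ and replace each vertex by a finite graph of high edge-connectivity, joining consecutive pieces by many parallel edges whose number grows with the distance from the root. Concretely, one can let $G$ be the union of finite graphs $G_n$, where $G_n$ is obtained from $G_{n-1}$ by attaching, to each piece at level $n-1$, countably many new finite pieces, each joined by at least $n$ edge-disjoint paths; choosing the pieces to have bounded degree (e.g. subdivided expanders, or cubic graphs of high girth and edge-connectivity) keeps $\Delta(G)$ finite while making $G$ \iecon.

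The first step is to verify that this $G$ is indeed \iecon: between any two vertices $u,v$, one routes edge-disjoint paths through the bottleneck separating their pieces; since that bottleneck can be chosen arbitrarily far from the root by going deep enough into the tree, and the number of parallel connections grows without bound, the edge-connectivity between $u$ and $v$ is infinite. The second step is the degree bound: because each finite piece has bounded degree and only finitely many of the parallel edges meet any single vertex (one spreads the $n$ connections across $n$ distinct vertices of each piece), $G$ has bounded maximum degree. This kills $\TTT$ as a topological minor outright.

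The third and main step is to show the Farey graph is not a topological minor of $G$. Here one should exploit the tree-like structure: $G$ admits a tree-decomposition modelled on $T_{\aleph_0}$ with \emph{finite} adhesion sets (the bundles of parallel edges have finitely many endpoints in each piece). A subdivision of the Farey graph inside $G$ would, via this decomposition, give a "tree-like" presentation of the Farey graph itself with finite adhesion. But the Farey graph contains, for each of its ends, arbitrarily many internally disjoint double rays / cycles accumulating at that end, and more to the point every finite separator of the Farey graph leaves at most one "big" side — yet the Farey graph is $2$-connected and contains two disjoint cycles through regions that a single finite adhesion set cannot separate in the required tree-like fashion. The cleanest route is to show $G$ has no $K_4$-subdivision spanning an end in a way the Farey graph requires; more robustly, one argues that any topological minor of $G$ inherits a tree-decomposition over $T_{\aleph_0}$ with finite adhesion, whereas the Farey graph, being "highly connected along each of its ends" (indeed minor-equivalent to the self-similar halved Farey graph in which every finite separator is a $2$-set both of whose sides are again Farey-like), cannot carry such a decomposition: a finite-adhesion tree-decomposition of the Farey graph over a tree would have to separate it into small pieces, contradicting that removing any finite vertex set from $F$ leaves a component that still contains a copy of (a large finite truncation of) $F$ of unbounded "width".

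The step I expect to be the genuine obstacle is this last one — pinning down the right invariant that the Farey graph possesses and $G$ provably lacks. The degree argument against $\TTT$ is immediate, but ruling out $F$ requires isolating a structural feature preserved under taking topological minors. My proposed invariant is: \emph{the existence of a tree-decomposition over $T_{\aleph_0}$ with finite adhesion and finite torsos} (equivalently, bounded "local" structure along a locally finite modelling tree). One must check (a) $G$ has such a decomposition by construction, (b) this property passes to topological minors of $G$ — this is the delicate part, as subdivisions can route across many pieces, so one needs the finite adhesion to bound how a branch path of the subdivision interacts with the skeleton — and (c) the Farey graph has no such decomposition, using its self-similarity: any finite adhesion set $S$ separating $F$ non-trivially has a side $F_S$ that still contains all but finitely many levels $\hFG_n$ up to relabelling, so the torsos cannot all be finite and the modelling tree cannot be locally finite. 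Assembling (a)--(c) carefully, with the combinatorial definition of $F$ via the halved Farey graphs $\hFG_n$ from Section~\ref{sec:Preliminaries} doing the work in (c), completes the proof.
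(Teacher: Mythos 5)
There is a fatal gap at the very first step: no \iecon\ graph can have bounded (or even just one vertex of finite) degree. If a vertex $v$ has finite degree $d$, then the $d$ edges at $v$ form a finite cut separating $v$ from every other vertex, so there are at most $d$ edge-disjoint $v$--$u$ paths for any $u$. Indeed, every vertex of an \iecon\ graph has infinite degree (the paper uses exactly this fact in the proof of Lemma~\ref{basicFact}). So your proposed construction cannot exist, and your argument for excluding $\TTT$ --- which rests entirely on the degree bound --- collapses with it. Spreading the parallel connections ``across $n$ distinct vertices of each piece'' does not help: \emph{every} vertex must end up with infinite degree, so the degree obstruction to a subdivision of $\TTT$ is simply not available in this setting. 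Your step (c) is also on shaky ground independently of this, since the Farey graph itself is built along a binary tree and one would need a precise invariant, but the argument never gets that far.

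The paper's actual proof is one paragraph: it invokes a result from~\cite{OrderCompatiblePaths} giving an \iecon\ graph $G$ in which no two vertices are joined by infinitely many edge-disjoint pairwise \emph{order-compatible} paths (paths that traverse their common vertices in the same order). Both $F$ and $\TTT$ contain a pair of vertices with infinitely many edge-disjoint pairwise order-compatible paths between them, and this property is visibly inherited by any graph containing a subdivision of $F$ or $\TTT$; hence $G$ contains neither as a topological minor. The lesson is that the invariant separating $G$ from $F$ and $\TTT$ must be an \emph{edge}-theoretic one that both target graphs share, rather than a degree or tree-width-type condition, which infinite edge-connectivity already forbids.
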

\begin{proof}
By a recent result~\cite{OrderCompatiblePaths} there exists an \iecon\ graph $G$ which does not contain infinitely many edge-disjoint pairwise order-compatible paths between any two of its vertices.
Here, two \mbox{$u$--$v$} paths are \emph{order-compatible} if they traverse their common vertices in the same order.
Then the graph $G$ does not contain a subdivision of the Farey graph or of $\TTT$ because both the Farey graph and $\TTT$ have pairs of vertices with infinitely many edge-disjoint pairwise order-compatible paths between them.
\end{proof}

\section{Overall proof strategy}\label{sec:Proofstrategy}

\noindent Our aim for the remainder of this paper is to show that every \iecon\ graph contains either the Farey graph or $\TTT$ as a minor (Theorem~\ref{mainresult}).
The proof consists of two halves.
In the first half (Section~\ref{sec:HalfOne}) we show that every \iecon\ graph without a $\TTT$ minor is `robust' (Theorem~\ref{PowerHorse}), explained below.
Then, in the second half (Section~\ref{sec:HalfTwo}), we employ Theorem~\ref{PowerHorse} to prove that every \iecon\ graph without a $\TTT$ minor must contain a Farey graph minor, completing the proof of Theorem~\ref{mainresult}.

The Farey graph and $\TTT$ are both \iecon , but in different ways.
The infinite edge-connectivity of the Farey graph, on the one hand, is robust in that deleting the two endvertices of an edge always leaves only \iecon\ components.
The infinite edge-connectivity of $\TTT$, on the other hand, is fragile in that deleting $t$ results in a tree.
In the first half of the proof of Theorem~\ref{mainresult} we show that every \iecon\ graph without a $\TTT$ minor is essentially robust, not fragile (Theorem~\ref{PowerHorse}).

In the second half of the proof of Theorem~\ref{mainresult} we construct a Farey graph minor in an arbitrary \iecon\ $\TTT$ free graph~$G$.
By Lemma~\ref{FareyGisMinorOfHalvedFareyG} it suffices to construct a halved Farey graph minor.
Using that $G$ is robust by Theorem~\ref{PowerHorse}, we shall essentially prove the following assertion:

\emph{For every two vertices $u$ and $v$ of $G$ there exist two induced subgraphs $H_u,H_v\subset G$ containing $u$ and $v$ respectively and which satisfy the following conditions:
\begin{enumerate}
\item $X:=V(H_u)\cap V(H_v)$ is finite, non-empty and connected in $G$;
\item both $H_u/X$ and $H_v/X$ are \iecon ;
\item $X$ avoids $u$ and $v$;
\item $uX$ is an edge of $H_u/X$ and $vX$ is an edge of $H_v/X$.
\end{enumerate}}
\noindent If we choose $u$ and $v$ to form an edge of $G$, then the three vertices $u,v$ and $X$ span a triangle $\hFG_1$ in $(H_u\cup H_v)/X$.
And since $H_u/X$ and $H_v/X$ are both \iecon\ and robust again, we can reapply the assertion in $(H_u/X)-uX$ to $u$ and $X$, and in $(H_v/X)-vX$ to $v$ and $X$.
By iterating this process, we obtain a halved Farey graph minor in the original graph $G$ at the limit, and this will complete the proof of Theorem~\ref{mainresult}.

\section{Robustness}\label{sec:HalfOne}

\noindent The aim of this section is to prove Theorem~\ref{PowerHorse} which has been outlined in the previous section.
Our proof proceeds in three steps.
First, we provide some tools that will help us to (i) identify \iecon\ `parts' of arbitrary graphs and (ii) allow us to distinguish all these `parts' at once in a tree-like way.
In the second step, then we employ these tools to analyse the components of $G-u-v$ for \iecon\ graphs $G$ and vertices $u,v$ of~$G$.
In the third step, we proceed to prove Theorem~\ref{PowerHorse}.

\subsection{Finitely separating spanning trees}

Let $G$ be any graph.
Two vertices of $G$ are said to be \emph{finitely separable} in $G$ if there is a finite set of edges of $G$ separating them in~$G$.
If every two distinct vertices of $G$ are finitely separable, then $G$ itself is said to be \emph{finitely separable}.
An equivalence relation ${\sim}={\sim}_G$ is declared on the vertex set of $G$ by letting $x\sim y$ whenever $x$ and $y$ are not finitely separable.
The graph $\tilde{G}$ is defined on $V(G)/{\sim}$ by declaring $XY$ an edge whenever $X\neq Y$ and there is an $X$--$Y$ edge in~$G$.
Note that the graph $\tilde{G}$ is always finitely separable.
A~spanning tree $T$ of $G$ is \emph{finitely separating} if all its fundamental cuts are finite.
By standard arguments of topological infinite graph theory, the following theorem is equivalent to Theorem~6.3 in~\cite{duality}.
See the appendix in Section~\ref{sec:appendix} for the arguments.

\begin{theorem}\label{finSepSTsExist}
Every connected finitely separable graph has a finitely separating spanning tree.
\end{theorem}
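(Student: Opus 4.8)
The plan is to build the finitely separating spanning tree by a direct greedy/transfinite construction that keeps all fundamental cuts finite, rather than invoking the duality theorem. First I would reduce to the countable case: in a connected finitely separable graph $G$, every vertex $v$ together with a fixed root $r$ is separated by some finite edge set, so the "finite-distance" balls around $r$ are countable, and $G$ is a countable union of countable sets; hence I may assume $V(G)$ is countable (if $G$ is uncountable one builds the tree on each countable piece compatibly, but the countable case carries the essential content). Enumerate $V(G)=\{v_0,v_1,v_2,\dots\}$ with $v_0=r$.

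Next I would construct an increasing sequence of finite subtrees $T_0\subset T_1\subset\cdots$ with $v_n\in V(T_n)$, maintaining the invariant that each $T_n$ is "finitely separating relative to $G$" in the sense that for every edge $e\in E(T_n)$ the fundamental cut of $e$ in $G$ computed from the two sides of $T_n-e$ is finite — equivalently, the two components of $T_n-e$ have only finitely many $G$-edges between them. To pass from $T_n$ to $T_{n+1}$, if $v_{n+1}\notin V(T_n)$, pick any $V(T_n)$--$v_{n+1}$ path in $G$ (using connectedness), let $w$ be its last vertex on $T_n$ and $P$ its final segment from $w$, and set $T_{n+1}:=T_n\cup P$. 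Adding the finite path $P$ at the single attachment vertex $w$ creates, for the new edges, fundamental cuts that are finite because each new component of $T_{n+1}-e$ (for $e\in P$) is finite, hence sends only finitely many edges anywhere; and it does not change which vertices lie on either side of an old edge $e\in E(T_n)$ except by adding the finite set $V(P)$ to the side containing $w$, so the fundamental cut of $e$ grows by at most finitely many edges and stays finite. Then $T:=\bigcup_n T_n$ is a spanning tree of $G$.

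Finally I would check that $T$ is finitely separating, i.e. every fundamental cut of $T$ in $G$ is finite. Fix $e\in E(T)$; then $e\in E(T_n)$ for some $n$, and for $m\ge n$ the two sides $A_m,B_m$ of $T_m-e$ increase to the two sides $A,B$ of $T-e$. The subtlety is that the fundamental cut $E_G(A,B)$ is a limit of the finite cuts $E_G(A_m,B_m)$, and an increasing union of finite sets can be infinite — so I must argue that this union stabilizes. Here is where finite separability of $G$ is used crucially: pick any vertex $a\in A$ and $b\in B$; since $a\not\sim b$ there is a finite edge set $S$ separating $a$ from $b$ in $G$, and one shows $E_G(A,B)$ is "trapped" by $S$ in the sense that every $A$--$B$ edge, lying on the unique $a$--$b$ path-structure forced through $e$, must be among a controlled finite set. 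More carefully, the right invariant to carry in the construction is stronger: I would additionally require at stage $n$ that for each $e\in E(T_n)$ we have already committed a finite edge set $S_e\subset E(G)$ with the property that $S_e$ separates the two sides of $T-e$ will be — concretely, when $e$ is first added as part of a path $P$ attached at $w$, the two endpoints of $e$ are finitely separable, so fix a finite separator $S_e$; then every later vertex added lands on a definite side of $S_e$, so $A$ and $B$ are exactly the two sides determined by $S_e$, whence $E_G(A,B)\subset S_e$ is finite.

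\textbf{Main obstacle.} The delicate point, and the only place the hypothesis is really needed, is the last one: ensuring that the finite fundamental cuts appearing along the construction do not blow up to an infinite cut in the limit. Everything rests on pinning down, at the moment each tree edge is born, a finite $G$-separator that will forever separate its two eventual sides; finite separability of $G$ supplies exactly such a separator for the two endvertices of the edge, and one must verify that "the side of $S_e$ on which a vertex lies" is well-defined and stable as the tree grows. I would expect this bookkeeping — reconciling the topological statement "fundamental cut is finite" with the combinatorial statement "some fixed finite $S_e$ separates the sides" — to be the part requiring the most care, and it is presumably why the paper prefers to cite Theorem~6.3 of~\cite{duality} and defer the translation to the appendix.
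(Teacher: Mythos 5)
Your proposal attempts a direct greedy construction, whereas the paper does not prove the statement from scratch at all: it derives it from Bruhn--Diestel's topological Theorem~6.3 of~\cite{duality} together with the appendix Lemma~\ref{lem:theEquivalence}, which shows that the spanning trees whose closures contain no circle are exactly the finitely separating ones. A self-contained combinatorial proof would be welcome, but yours has a genuine gap exactly at the point you flag as the main obstacle. The claim that, once a finite separator $S_e$ of the endvertices of $e$ is committed when $e$ is born, ``$A$ and $B$ are exactly the two sides determined by $S_e$, whence $E_G(A,B)\subset S_e$'' is false for the construction as described: nothing forces later path attachments to respect $S_e$, so vertices on the ``$x$-side'' of $S_e$ can end up in the $T$-component of $y$ and vice versa. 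Concretely, take $G$ to be the infinite ladder (two rays $u_0u_1\ldots$ and $w_0w_1\ldots$ with rungs $u_iw_i$), which is locally finite and hence finitely separable. Enumerating the vertices as $u_0,w_0,u_1,w_1,\ldots$ and attaching shortest paths greedily, your procedure builds the spanning tree consisting of both rays plus the single rung $u_0w_0$; the fundamental cut of that rung is the set of all rungs, which is infinite, and no finite separator $S_{u_0w_0}$ fixed at stage $1$ contains it. Your stage-$n$ invariant is vacuous (any two disjoint finite vertex sets in a simple graph have finitely many edges between them) and so gives no control in the limit; what is actually needed is a rule that \emph{steers} every later attachment path to hit the current tree on the correct side of all previously committed separators, and supplying such a rule (and proving it can always be followed) is the substance of the theorem, not bookkeeping.

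A secondary error: your reduction to the countable case is unjustified. Finite separability does not imply countability --- the star $K_{1,\aleph_1}$ is connected, finitely separable and uncountable, and its ``ball of separation-radius $1$'' around the centre is uncountable, contradicting your claim that these balls are countable. The uncountable case would require a transfinite construction with a verified limit-stage invariant, which cannot be waved away as carrying no essential content.
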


Usually, we will employ Theorem~\ref{finSepSTsExist} to find a finitely separating spanning tree $T$ of $\tilde{G}$ that we will then use to analyse the overall structure of $G$ with regard to infinite edge-connectivity.
In this context, the nodes of $T\subset\tilde{G}$ will also be viewed as the vertex sets of $G$ that they formally are.
When we view a node of $T$ as a vertex set of $G$ we will refer to it as \emph{part} for clarity.

Every finitely separating spanning tree $T\subset\tilde{G}$ defines an $S$-tree $(T,\alpha)$ for the set $S=\fcut(G)$ of all the separations of the vertex set $V(G)$ that are bipartitions induced by finite bonds of~$G$:
Let the map $\alpha$ send every oriented edge $(t_1,t_2)\in\vE(T)$ to the ordered pair $(\,\bigcup V(T_1)\,,\,\bigcup V(T_2)\,)$ for the two components $T_1$ and $T_2$ of $T-t_1t_2$ containing $t_1$ and $t_2$ respectively.
Then $\alpha(t_1,t_2)$ clearly is an oriented bipartition of $V(G)$.
Moreover, we have $\alpha(\ve)\le\alpha(\vf)$ whenever $\ve\le\vf\in\vE(T)$ and $(\alpha(\ev))^\ast=\alpha(\ve)$ for all $\ve\in\vE(T)$.
It remains to show that $\alpha(\ve)$ always stems from a finite bond of~$G$.
%
%
%
%
%
%
%
For this, it suffices to show that if $\{A,B\}\in\fcut(\tilde{G})$ then $\{\,\bigcup A\,,\,\bigcup B\,\}\in\fcut(G)$, because all the fundamental cuts of $T$ are finite bonds.
Between every two $\sim$-classes $U$ and $W$ of $G$ there are only finitely many edges, because $u\in U$ is separated from $w\in W$ by a finite cut of $G$ and then $U$ and $W$ must respect this finite cut.
Hence the finitely many $A$--$B$ edges in $\tilde{G}$ give rise to only finitely many $(\bigcup A)$--$(\bigcup B)$ edges in $G$, and these are all $(\bigcup A)$--$(\bigcup B)$ edges in $G$.
Using that $G$ contains for all ${\sim}$-equivalent vertices $x$ and~$y$ an $x$--$y$ path avoiding the finitely many $(\bigcup A)$--$(\bigcup B)$ edges, it is straightforward to show that both $G[\,\bigcup A\,]$ and $G[\,\bigcup B\,]$ are connected.


The \emph{part} of a star $\{\,(A_i,B_i)\mid i\in I\,\}$ of separations of a given set is the intersection $\bigcap_{i\in I} B_i$.
If~$(T,\alpha)$ is a $\fcut(G)$-tree that is defined by a finitely separating spanning tree $T$ of~$\tilde{G}$, then for every node $t\in T$ the part of the star $\alpha [\vF_{\! t}]\subset\vfcut(G)$ associated with $t$ is equal to the part $t\subset V(G)$.
And the parts $t\subset V(G)$ in turn are precisely the $\sim$-classes of~$G$.
Thus, in this sense, by Theorem~\ref{finSepSTsExist} every connected graph admits a tree structure that displays all its $\sim$-classes.

Parts of infinite stars in $\vfcut(G)$ can be made connected for a reasonable price:

\begin{lemma}\label{Taleph0BranchSetConstructionFragment}
Suppose that $G$ is a connected graph, that $\sigma=\{\,(A_i,B_i)\mid i\in I\,\}$ is an infinite star in $\vfcut(G)$ 
and that $i_\ast\in I$ is given.
Then there is an infinite subset $J\subset I$ containing $i_\ast$ such that the part of the infinite substar $\{\,(A_j,B_j)\mid j\in J\,\}\subset\sigma$ is connected in $G$.
\end{lemma}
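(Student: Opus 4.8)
The plan is to build the index set $J$ greedily along a fixed enumeration, choosing at each step an index whose separation is comparable-below the separations already chosen, so that the resulting part is the vertex set of an increasing union of connected subgraphs.

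First I would fix an enumeration $i_0:=i_\ast, i_1, i_2, \dots$ of a countably infinite subset of $I$; since $\sigma$ is infinite this is possible, and it suffices to produce an infinite $J$ inside this countable subset. The key structural observation is that for any finite set $F$ of indices with $i_\ast\in F$, the intersection $\bigcap_{i\in F} B_i$ is connected in $G$. Indeed, the separations $(A_i,B_i)$ stem from finite bonds, so $G[B_i]$ is connected for each $i$ (this is exactly the kind of connectedness fact established in the paragraph proving that $\alpha(\ve)$ stems from a finite bond); and because $\sigma$ is a star, for distinct $i,j\in F$ we have $(A_i,B_i)\le(B_j,A_j)$, i.e.\ $A_i\subseteq B_j$, hence $B_i\cup B_j = V(G)\setminus(A_i\cap A_j)$ and the two connected sets $B_i$, $B_j$ cover all of $V(G)$ minus a set disjoint from either; since $G$ is connected, $G[B_i\cup B_j]$ is connected, and then by a standard induction the finite intersection $\bigcap_{i\in F}B_i$ is connected provided we also know its complement $\bigcup_{i\in F}A_i$ is "small" in the right sense. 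The honest way to run this: since each $(A_i,B_i)$ comes from a finite bond and $\sigma$ is a star, one shows directly that $G\bigl[\bigcap_{i\in F}B_i\bigr]$ is connected by the same argument used in the excerpt for the two-set case $\{\bigcup A,\bigcup B\}$, iterated finitely often.

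Next, I build $J=\{j_0<j_1<\cdots\}$ recursively with $j_0$ the position of $i_\ast$. Having chosen $j_0,\dots,j_n$, I let $P_n:=\bigcap_{k\le n} B_{j_k}$, a connected vertex set by the observation above. I then pick $j_{n+1}$ to be any index (from the enumeration) larger than $j_n$; no extra compatibility is needed because $\sigma$ being a star already guarantees all the $(A_i,B_i)$ are pairwise comparable after inversion, so $P_{n+1}=P_n\cap B_{j_{n+1}}$ is again connected by the same finite-intersection argument. The part of the substar $\{(A_j,B_j)\mid j\in J\}$ is $\bigcap_{n} B_{j_n}=\bigcap_n P_n$, a decreasing intersection of connected subgraphs of $G$. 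A decreasing intersection of connected sets need not be connected in general, so this is where the real work lies.

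\textbf{Main obstacle.}
The hard part will be showing that the nested intersection $\bigcap_n P_n$ is actually connected, not merely a nested intersection of connected sets. The idea I would use: fix any two vertices $x,y\in\bigcap_n P_n$. They are $\sim$-inequivalent or not; if $x\sim y$ then $G$ contains an $x$–$y$ path avoiding any prescribed finite edge set, in particular avoiding all the (finitely many, at each stage) edges across each bond — but we need one path avoiding all the relevant edges simultaneously, across infinitely many bonds. To handle this I would instead use that each $A_j$ is "on the far side": since $\sigma$ is a star, the sets $A_j$ are pairwise disjoint (for a star of separations $\vr\le\sv$ gives $A_r\subseteq B_s$, i.e.\ $A_r\cap A_s=\varnothing$), so $\bigcup_{j\in J}A_j$ is a disjoint union, and $\bigcap_n P_n = V(G)\setminus\bigcup_{j\in J}A_j$. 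Now, given $x,y$ in this set and any $x$–$y$ path $W$ in $G$ (which exists, $G$ connected), $W$ meets only finitely many of the pairwise-disjoint sets $A_j$ — say those with $j$ in a finite $F\subseteq J$. We may then reroute $W$ around each such $A_j$: whenever $W$ enters and leaves $A_j$, the entry and exit vertices lie in $B_j$, which is connected, and one replaces the detour through $A_j$ by a path inside $B_j$ — but care is needed because this replacement path might dip into some other $A_{j'}$. Since the $A_j$ are pairwise disjoint and $F$ is finite, an induction on $|F|$ (reroute around the $A_j$ with, say, the largest enumeration index last, never reintroducing a previously-eliminated one) yields an $x$–$y$ path avoiding $\bigcup_{j\in J}A_j$ entirely, i.e.\ a path in $G\bigl[\bigcap_n P_n\bigr]$. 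That establishes connectedness of the part. I would double-check the induction step — that rerouting around $A_{j}$ inside $B_{j}$ can be done so as only to possibly touch $A_{j'}$ with $j'$ still "to be processed" — since the disjointness of the $A_j$ and finiteness of $F$ is exactly what makes this terminate; this bookkeeping is the only genuinely delicate point.
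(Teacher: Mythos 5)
There is a genuine gap, and in fact your approach cannot work as written: you take $J$ to be an essentially arbitrary infinite subset of $I$ (``no extra compatibility is needed''), but the lemma is false for arbitrary infinite substars --- the freedom to choose $J$ is the entire content of the statement. Concretely, let $G=K_{2,\aleph_0}$ with non-adjacent vertices $c_1,c_2$ of infinite degree and leaves $\ell_1,\ell_2,\dots$, and put $(A_i,B_i)=(\{\ell_i\},\,V(G)\setminus\{\ell_i\})$. This is an infinite star in $\vfcut(G)$, yet the part of the full star is $\{c_1,c_2\}$, which is disconnected. Your rerouting argument breaks down on exactly this example: the path $c_1\ell_1c_2$ meets $A_1$, every detour through $B_1$ must pass through some $A_{j'}$ with $j'\neq 1$, and so on forever. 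The induction on $|F|$ you propose does not terminate, because the delicate point is not reintroducing previously eliminated sets $A_j$ but introducing sets $A_{j'}$ with $j'\in J\setminus F$ that the original path never met; since $J$ is infinite, this can happen indefinitely. Your preliminary ``structural observation'' that finite intersections $\bigcap_{i\in F}B_i$ are connected is also false: in the $4$-cycle $v_1v_2v_3v_4$ the pairs $(\{v_1\},\{v_2,v_3,v_4\})$ and $(\{v_3\},\{v_1,v_2,v_4\})$ form a star of bond-bipartitions whose part $\{v_2,v_4\}$ is disconnected.

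The missing idea is that $J$ must be chosen recursively so that the part contains a connected skeleton through all the boundaries $\partial B_j$. The paper builds an ascending sequence of finite trees $T_0\subset T_1\subset\cdots$ with $T_n\subset G[\,\bigcap_{j\in J_n}B_j\,]$ and $T_n\supset\partial B_j$ for all $j\in J_n$, and --- this is the step your proposal omits --- picks the next index $i_n$ so that $A_{i_n}$ avoids the finite tree $T_n$, which is possible precisely because the infinitely many $A_i$ are pairwise disjoint and $T_n$ is finite. Detours are then taken inside $T=\bigcup_n T_n$, which lies entirely in the part; hence a path chosen to cross as few of the bonds $E(A_j,B_j)$ as possible can be shown to cross none at all, and connectedness follows. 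Without such a skeleton the detours have nowhere safe to go, as the $K_{2,\aleph_0}$ example shows.
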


\begin{proof}
For each $i\in I$ we write $F_i$ for the finite bond $E(A_i,B_i)$ of $G$.

Inductively, we construct an ascending sequence $T_0\subset T_1\subset\cdots$ of finite trees in $G$ 
together with a sequence of distinct indices $i_0,i_1,\dots$ in $I\setminus\{i_\ast\}$ such that, for all $n\in\N$ and $J_n:=\{i_\ast\}\sqcup\{i_0,\ldots,i_{n-1}\}$, the tree $T_n$ is a subgraph of $G_n:=G[\,\bigcap_{j\in J_n}B_j\,]$ containing all $\partial B_j$ with $j\in J_n$.
Then the tree $T:=\bigcup_{n\in\N}T_n$ will ensure that $G_\infty:=G[\,\bigcap_{j\in J}B_j\,]$ is connected for $J:=\bigcup_{n\in\N}J_n$.
(For whenever a path in $G$ connecting two given vertices in $G_\infty$ uses vertices that are not in $G_\infty$, then the path crosses one of the bonds $F_j$, and the number of bonds crossed can be decreased by replacing path segments with detours in $T\supset \partial B_j$ because $T\subset G_\infty$.
Therefore, choosing a path that crosses as few bonds $F_j$ as possible will suffice to find a path that lies entirely in $G_\infty$.)

To start the construction, let $T_0$ be any finite tree in $G[B_{i_\ast}]$ that contains $\partial B_{i_\ast}$.
At step $n+1$ of the construction, suppose that we have already constructed $T_n$ and $J_n$.
As $T_n$ is finite, we find an index $i_n\in I\setminus J_n$ for which $A_{i_n}$ avoids $T_n$, ensuring $T_n\subset G_{n+1}$.
To ensure that $T_n$ can be extended in $G_{n+1}$ to a finite tree $T_{n+1}$ that contains $\partial B_{i_n}$, it suffices to show that $G_{n+1}$ is connected.
Given any two vertices in $G_{n+1}$, consider any path between them in $G[B_{i_n}]$, chosen to cross as few of the finite bonds $F_j$ with $j\in J_n$ as possible.
Then the path avoids all these $F_j$, for otherwise the number of bonds crossed could be decreased by replacing path segments with detours in $T_n\supset\bigcup_{j\in J_n}\partial B_j$.
Therefore, $G_{n+1}$ is connected.
\end{proof}

\subsection{Analysing the components}

Now we analyse the components of $G-u-v$ for \iecon\ graphs $G$ and vertices $u,v$ of~$G$.
The main results here are the two Lemmas~\ref{fcutTreeContainsBinaryGivesTTT} and~\ref{fcutTreeArrowBarrage}.
Here is the first main lemma:

\begin{lemma}\label{fcutTreeContainsBinaryGivesTTT}
Suppose that $G$ is an \iecon\ graph, that $u,v$ are two distinct vertices of $G$, and that $C$ is a component of $G-u-v$.
If $\tilde{C}$ has a finitely separating spanning tree that contains a subdivision of the infinite binary tree, then $G[C+u+v]$ contains $\TTT$ as a minor.
\end{lemma}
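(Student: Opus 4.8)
The plan is to build a model of $\TTT$ inside $G[C+u+v]$ by using the two vertices $u,v$ together to play the role of the apex $t$, and by using the finitely separating spanning tree $T$ of $\tilde{C}$ to organise the branch sets. Fix a subdivision $S\subset T$ of the infinite binary tree; its nodes that have degree $3$ in $S$ form a set carrying the structure of the infinite binary tree, and I will index along this tree. The crucial leverage is that $G$ is \iecon{} while the fundamental cuts of $T$ in $\tilde{C}$ are finite: each edge $e$ of $S$ corresponds to a finite bond of $\tilde C$, hence (by the argument in Section~\ref{sec:HalfOne} that passes finite bonds of $\tilde G$ back to finite bonds of $G$) to a finite edge set of $G$ separating the two sides of $e$ within $C$; since $C$ itself is not finitely separated from the rest by these edges — indeed $G$ is \iecon{} — removing any such finite bond from $C$ still leaves each side attached to $\{u,v\}$ by infinitely many edges, for otherwise that side together with the finite bond and the finitely many edges to $\{u,v\}$ would be a finite cut in the \iecon{} graph $G$.

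First I would set up the branch sets. For the root side I would take a branch set $B_t \supset \{u,v\}$; to make it connected I add a $u$--$v$ path through $G$ (available since $G$ is connected — if $uv\notin E(G)$, route through $C$ or elsewhere, but most cleanly one notes $G$ \iecon{} gives a $u$--$v$ path, and one may absorb its interior into $B_t$ while keeping the rest of the construction disjoint by choosing the binary-tree branch sets deep inside $C$). For each node $x$ of the binary tree I would assign a branch set $B_x$ consisting of a connected piece of $C$ living "below" the corresponding edge of $S$, chosen recursively and pairwise disjoint: use the parts of $S$ along the path from the root, and use Lemma~\ref{Taleph0BranchSetConstructionFragment} (or directly the connectivity of the nested intersections $G[\bigcap B_j]$ established in Section~\ref{sec:HalfOne}) to keep each $B_x$ connected and finite. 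The key edges to exhibit are: (a) a $B_x$--$B_{x'}$ edge whenever $x'$ is a child of $x$ in the binary tree — this is exactly an edge of the subdivided binary tree $S\subset T\subset\tilde C$, lifted to $G$; and (b) a $B_t$--$B_x$ edge for every node $x$ of the binary tree — here is where infinite edge-connectivity is used: since each $B_x$ is a finite connected subset of $C$ and $C-B_x$ together with finitely many bond-edges would otherwise finitely separate $B_x$ from $\{u,v\}$, there must be an edge of $G$ from $B_x$ to $\{u,v\}$, i.e. to $B_t$.

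The main obstacle is disjointness combined with item (b): I need simultaneously that the $B_x$ are pairwise disjoint, each connected, and each sends an edge to $\{u,v\}=B_t$. Connectivity of the nested pieces is handled by the lemmas of Section~\ref{sec:HalfOne}; disjointness is arranged by always choosing the next branch set strictly "below" a fresh bond of $S$ that the finite part already built avoids (the same bookkeeping as in the proof of Lemma~\ref{Taleph0BranchSetConstructionFragment}). For the edge to $\{u,v\}$: fix a node $x$; the finitely many bonds of $\tilde C$ separating $B_x$ from the root inside $S$ lift to a finite edge set $F_x$ of $G$; if $B_x$ sent no edge to $u$ or $v$, then $F_x$ together with the (finitely many, since $G[C+u+v]$ restricted appropriately — careful: $C$ may be infinite, so this needs the finitely separating tree again) edges leaving the relevant side would be a finite edge cut in $G$ isolating a vertex set from the rest, contradicting that $G$ is \iecon{}. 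I would make this last point precise by choosing $B_x$ to be one of the parts $t\in T$, or a finite union of such along a bounded segment of $S$, so that $\partial_C B_x$ is finite by finite separability of $T$; then $\partial_G B_x \subset \partial_C B_x \cup \{u,v\}$, and \iecon{}ness of $G$ forces $\{u,v\}\cap N_G(B_x)\neq\emptyset$, as required. Contracting all branch sets then yields $\TTT$ as a minor of $G[C+u+v]$.
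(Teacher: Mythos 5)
There is a genuine gap at the heart of your construction: you need each branch set $B_x$ to be \emph{both} connected \emph{and} a side of a finite edge-cut of $C$ (the latter is what your argument for the apex edge $B_x$--$\{u,v\}$ rests on), and neither of your two proposed choices of $B_x$ delivers both properties when nodes of $T$ have infinite degree --- which the hypothesis fully allows. If you take $B_x$ to be a single part $t\in T$ or a finite union of parts along a segment of $S$, then $\partial_C B_x$ is finite only when the nodes involved have \emph{finite} degree in $T$ (the edge-boundary is the union of the fundamental cuts of all $T$-edges leaving the segment, and there may be infinitely many such edges); moreover a single part of an infinite-degree node need not even induce a connected subgraph --- that is precisely why Lemma~\ref{Taleph0BranchSetConstructionFragment} exists. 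If instead you take $B_x$ to be the part of an infinite substar as that lemma provides, you get connectivity, but the part of an infinite substar is bounded by the union of \emph{infinitely many} finite bonds, so $E_C(B_x,C\setminus B_x)$ may be infinite and the deduction ``finite cut in $C$ plus $G$ \iecon\ forces an edge to $\{u,v\}$'' collapses. Your correct local observation --- that each side of a single finite bond of $C$ receives infinitely many edges from $\{u,v\}$ --- only works one bond at a time, not for the star-parts you actually use as branch sets.

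The paper escapes this by reversing the order of quantification. It first contracts the subdivided binary tree to obtain a $\fcut(C)$-tree whose underlying tree is $T_{\aleph_0}$ itself, then applies the bond-by-bond observation to every oriented tree edge and a cofinality/pigeonhole argument to pass to a subtree on which a \emph{single} fixed vertex, say $u$, sends an edge to the far side of \emph{every} oriented edge. Only then, at each step, does it pick the $u$-neighbour $w_n$ in a fresh far side \emph{first} and grow the connected branch set $W_n$ around $w_n$ \emph{afterwards} via Lemma~\ref{Taleph0BranchSetConstructionFragment}; the apex edge is thus secured before connectivity is arranged, rather than derived from a cut condition on the finished branch set. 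The accompanying invariant (the paper's condition~(iv), reserving infinitely many unused directions at every already-built node for future branch sets) is the disjointness bookkeeping that you gesture at but do not supply; Lemma~\ref{Taleph0BranchSetConstructionFragment} alone does not coordinate infinitely many such choices across the whole tree. Using $\{u,v\}$ plus a connecting path as the apex branch set is not where the difficulty lies; the missing ideas are the pigeonhole reduction to a single apex vertex and the ``neighbour first, branch set second'' ordering.
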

\begin{proof}
Consider any finitely separating spanning tree of $\tilde{C}$ that contains a subdivision of the infinite binary tree.
Then this spanning tree also contains $T_{\aleph_0}$ as a contraction minor which gives rise to a $\fcut(C)$-tree $(T,\alpha)$. 
Next, we fix any root $r\in T$, and for every edge $e\in T$ we fix $\ve$ as its orientation pointing away from the root~$r$ (the orientation $\ve=(x,y)$ of $e=\{x,y\}$ satisfying $x\in rTy$).
Let $O:=\{\,\ve\mid e\in E(T)\,\}$.
Since $G$ is \iecon , $O$ is equal to the union $O_u\cup O_v$ where $\ve\in O_w$ (for $w=u,v$) if and only if $w$ sends an edge in $G$ to $B$ for $\alpha(\ve)=(A,B)$.
Now $O_u$ is cofinal\footnote{A subset $X$ of a poset $P=(P,{\le})$ is \emph{cofinal} in $P$, and ${\le}$, if for every $x\in X$ there is a $p\in P$ with $p\ge x$.} in $O\subset\vE(T)$ or there is an oriented edge $\ve\in O$ with $O_v$ cofinal in $\uc{\ve}_O:=\{\,\vf\in O\mid\ve\le\vf\,\}$.
In either case, there is $\ve\in O$ with $O_u$ or $O_v$ cofinal in~$\uc{\ve}_O$.
Without loss of generality $O_u$ is cofinal in $\uc{\ve}_O$ for some $\ve\in O$.
By replacing $T$ with one of its subtrees and restricting $\alpha$ accordingly, we may even assume that $O_u$ is cofinal in~$O$.
In fact, then $O_u=O$ follows as $O_u$ is down-closed in~$O$.
We will use this to show $\TTT\minor G[C+u]$.

For this, we enumerate the vertices of $T_{\aleph_0}$ as $x_0,x_1,\ldots$ such that every $x_n$ is neighbour to some earlier $x^k$ ($k<n$).
Inductively, we construct a sequence $W_0,W_1,\ldots$ of disjoint connected vertex sets $W_n\subset V(C)$, a sequence $w_0,w_1,\ldots$ of vertices $w_n\in W_n$, and a sequence $t_0,t_1,\ldots$ of distinct nodes $t_n\in T$ such that, for all $n\in\N$: 
\begin{enumerate}
    \item $uw_n\in G$;
    \item $C$ contains a $W_i$--$W_j$ edge ($i,j\le n$) whenever $x_ix_j\in T_{\aleph_0}$;
    \item $w_n$ is contained in the part of the star $\alpha[\vF_{\! t_n}]$;
    \item for all $k\le n$ there are infinitely many oriented edges $\ve\in O\cap (\vF_{\! t_k})^\ast$ such that, for $\alpha(\ve)=(B,A)$, the vertex set $W_k$ contains $\partial_C B$ while $A$ is avoided by all $W_i$ with $i\le n$.
\end{enumerate}
Once the construction is completed, the sets $W_n$ and $\{u\}$ will give rise to a model of $\TTT$ in $G[C+u]$ by~(i) and~(ii).

At the construction start, we choose any neighbour $w_0$ of $u$ in $C$ (which exists as $O_u=O$ and $T$ is infinite), guaranteeing~(i).
Then $t_0$ is defined by~(iii).
Applying Lemma~\ref{Taleph0BranchSetConstructionFragment} in $C$ to the infinite star $\alpha[\vF_{\! t_0}]$ yields an infinite substar whose connected part $W_0\subset V(C)$ contains $w_0$ and satisfies both (ii) and~(iv) trivially.

At step $n>0$ of the construction, consider the $k<n$ for which $x_k x_n$ is an edge of $T_{\aleph_0}$, and pick an edge $\ve\in O\cap (\vF_{\! t_k})^\ast$ that (iv) provides for $k\le n-1$.
If we write $\alpha(\ve)=(B,A)$, then the vertex set $W_k$ contains $\partial_C B$ while $A$ is avoided by all $W_i$ with $i\le n-1$.
Using $O_u=O$ we find a neighbour $w_n$ of $u$ in~$A$ giving~(i), and $w_n$ defines $t_n$ by~(iii).
Then we apply Lemma~\ref{Taleph0BranchSetConstructionFragment} in $C$ to the infinite star
\begin{align*}
    \{\,(A_i,B_i)\mid i\in I\,\}:=\alpha\big[\,(\vF_{\! t_n}\!\setminus O)\cup\{\ve\}\,\big]
\end{align*}
where we take $i_\ast\in I$ to be the index of the separation~$\alpha(\ve)$.
This yields an infinite substar whose connected part $W_n\subset V(C)$ contains $w_n$ and satisfies~(ii) because $W_n$ contains $\partial_C A$ while $W_k$ contains $\partial_C B$.
Using the infinite substar and the choice of $\ve$ it is straightforward to verify~(iv) for all $k\le n$.
\end{proof}

Our second main lemma, Lemma~\ref{fcutTreeArrowBarrage}, requires some preparation.

\begin{definition}[Arrow]
Suppose that $u$ and $v$ are two distinct vertices.

An \emph{arrow from} $u$ \emph{to} $v$ is a graph $G$ that arises from the two vertices $u$ and $v$ by disjointly adding an \iecon\ graph $H$, adding a $u$--$H$ edge $uh$, and adding infinitely many $v$--$(H-h)$ edges.
Then $H$ is the arrow's \emph{payload}, 
$u$ is its \emph{nock} and $v$ is its \emph{head}.

An \emph{arrow barrage from} $u$ \emph{to} $v$ is a countably infinite union $\bigcup_{n\in\N}A_n$ of arrows $A_n$ from $u$ to $v$ such that $A_n$ and $A_m$ do not meet in any vertices other than $u$ and~$v$ for all $n\neq m$.
Then $u$ and $v$ are the \emph{nock} and \emph{head} of the arrow barrage.

When we say that some graph contains an arrow (barrage) minor \emph{from} $x$ \emph{to} $y$ for two vertices $x$ and $y$, we mean that the graph contains an arrow (barrage) minor such that the branch set corresponding to the arrow (barrage)'s nock contains $x$ while the branch set corresponding to the arrow (barrage)'s head contains $y$.
\end{definition}

The next definition captures the concept of recursive pruning that Diestel describes in his book~\cite{DiestelBook5} as follows:

\begin{definition}[Recursive pruning]
Let $T$ be any tree, equipped with a root and the corresponding tree-order on its vertices.
We recursively label the vertices of $T$ by ordinals, as follows.
Given an ordinal $\alpha$, assume that we have decided for every $\beta<\alpha$ which of the vertices of $T$ to label $\beta$, and let $T_\alpha$ be the subgraph of $T$ induced by the vertices that are still unlabelled. Assign label $\alpha$ to every vertex $t$ of $T_\alpha$ whose up-closure $\uc{t}_{T_\alpha}=\uc{t}_T\cap T_\alpha$ in $T_\alpha$ is a chain.
The recursion terminates at the first $\alpha$ not used to label any vertex; for this $\alpha$ we put $T_\alpha=:T^\ast$.
We call $T$ \emph{recursively prunable} if every vertex of $T$ gets labelled in this way, i.e., if $T^\ast=\emptyset$.
\end{definition}

\begin{proposition}[{\cite{DiestelBook5}*{Proposition~8.5.1}}]\label{RecPrunableBinary}
A rooted tree is recursively prunable if and only if it contains no subdivision of the infinite binary tree.
\end{proposition}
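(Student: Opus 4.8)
The plan is to prove both implications in contrapositive form, writing $T_2$ for the infinite binary tree (rooted, each vertex having exactly two children): we show (A) if $T^\ast\neq\emptyset$ then $T$ contains a subdivision of $T_2$, and (B) if $T$ contains a subdivision of $T_2$ then $T^\ast\neq\emptyset$. Two elementary facts do most of the work. First, a transfinite induction on $\alpha$ shows that the set of vertices carrying a label $<\alpha$ is up-closed in the tree order of $T$: if $t$ gets label $\alpha$ because $\uc{t}_{T_\alpha}$ is a chain, and $t'>t$ lies in $T_\alpha$, then $\uc{t'}_{T_\alpha}\subseteq\uc{t}_{T_\alpha}$ is a chain as well, so $t'$ also gets label $\alpha$; hence every $T_\alpha$, and $T^\ast$ in particular, is a down-closed---thus \emph{connected}---subtree of $T$. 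Second, any connected subgraph $S$ of a tree is convex, so $V(S)$ has a $\le_T$-least element $v$, and on $V(S)$ the order $\le_T$ agrees with the tree order of $S$ rooted at $v$ (for $x\in V(S)$ the $T$-path from the root of $T$ to $x$ is the path to $v$ followed by the wholly-in-$S$ path from $v$ to $x$); in particular $\uc{t}_S:=\uc{t}_T\cap V(S)$ is the up-closure of $t$ in $S$ rooted at $v$.

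For (A): since the pruning halts at the terminating stage, no $t\in T^\ast$ has $\uc{t}_{T^\ast}$ a chain; in particular $T^\ast$ itself is not a chain. I would then build a subdivision of $T_2$ inside $T^\ast$ by recursion over finite binary strings $\sigma$, maintaining a vertex $b_\sigma\in T^\ast$ together with two distinct children $c_\sigma^0,c_\sigma^1\in T^\ast$ of $b_\sigma$. To begin, incomparable $x,y\in T^\ast$ yield $b_\varnothing:=x\wedge y\in T^\ast$ (down-closedness) with the two children $c_\varnothing^0,c_\varnothing^1$ on the paths to $x$ and~$y$. At a step, since $\uc{c_\sigma^i}_{T^\ast}$ is not a chain we pick incomparable $x,y$ there, set $b_{\sigma i}:=x\wedge y\ (\ge c_\sigma^i)$ and let $c_{\sigma i}^0,c_{\sigma i}^1$ be its children towards $x$ and $y$; then we join $b_\sigma$ to $b_{\sigma i}$ along the $T^\ast$-path through $c_\sigma^i$. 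As $b_{\sigma i}\wedge b_{\sigma i'}=b_\sigma$ for $i\neq i'$---and inductively $b_\sigma\wedge b_\tau=b_\rho$ for $\rho$ the longest common prefix of $\sigma$ and $\tau$---these paths are pairwise internally disjoint and contain no $b_\tau$ internally, so their union is the desired subdivision of $T_2$.

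For (B): let $T'\subseteq T$ be a subdivision of $T_2$ and let $v$ be its $\le_T$-least vertex, so by the second fact $\le_T$ on $V(T')$ is the tree order of $T'$ rooted at $v$ and $\uc{t}_{T'}=\uc{t}_T\cap V(T')$ is the up-closure of $t$ in that rooted tree. The crux is that $\uc{t}_{T'}$ is never a chain. If $t=v$ this is clear, since $\uc{v}_{T'}=V(T')$ is not a chain. If $t\neq v$, then $\uc{t}_{T'}$ is infinite ($T'$ has no leaves), so were it a chain it would be a ray; but any ray in a subdivision of $T_2$ meets a vertex $b$ of degree $3$ in $T'$ (it traverses infinitely many edges while each subdivided segment is finite, so it meets infinitely many of the original, degree-$3$, vertices), and $b$ lies on the ray, hence in $\uc{t}_{T'}$, so $b\neq v$ and $b$ has two children in $T'$ rooted at $v$; thus $\uc{b}_{T'}\subseteq\uc{t}_{T'}$ is not a chain, a contradiction. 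Finally, a minimal-stage argument: if some vertex of $T'$ ever got a label, let $\alpha$ be the least stage at which this happens; then no vertex of $T'$ is labelled before stage $\alpha$, so $V(T')\subseteq V(T_\alpha)$, and a vertex $t\in V(T')$ labelled at stage $\alpha$ would have $\uc{t}_{T_\alpha}\supseteq\uc{t}_{T'}$ a chain---contradicting the crux. Hence $V(T')\subseteq V(T^\ast)$, so $T^\ast\neq\emptyset$.

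The step I expect to be the main obstacle is the bookkeeping with the various tree orders: the pruning process refers to up-closures in the ambient $T$, whereas the structural fact ``all up-closures branch'' concerns a subdivision of $T_2$ with respect to \emph{its own} root, so one must verify these coincide---which is exactly what the least-element/convexity remark provides on $V(T')$ (and, symmetrically, what lets us use $T^\ast$'s own tree order in (A)). With that bridge in place, both directions come down to the elementary equivalence that a tree has all up-closures branching, rooted anywhere, precisely when it is ``binary-tree-like''.
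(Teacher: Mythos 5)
Your proof is correct; note that the paper itself gives no proof of this proposition but simply cites Diestel's Proposition~8.5.1, and your argument is essentially the standard one used there: $T^\ast$ is down-closed and all up-closures in it branch, which yields a $TT_2$ greedily, while conversely the branch vertices of a $TT_2$ can never be pruned. Your explicit bridge between the ambient tree-order and the induced order on a down-closed (resp.\ convex) subtree is exactly the point that needs care, and you handle it correctly.
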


The next lemma is an observation that we will use often:

\begin{lemma}\label{basicFact}
Suppose that $G$ is an \iecon\ graph, that $u,v$ are two distinct vertices of $G$, and that $C$ is a component of $G-u-v$.
If $T$ is a finitely separating spanning tree of $\tilde{C}$ and $t\in T$ has finite degree in $T$, then $C[t]$ is \iecon\ and either $u$ or $v$ sends infinitely many edges in $G$ to the part $t\subset V(C)$.
\end{lemma}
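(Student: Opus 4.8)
The plan is to exploit the structure of a finitely separating spanning tree $T$ of $\tilde C$ around a node $t$ of finite degree. Write $\vF_{\!t}=\{(x_1,t),\dots,(x_d,t)\}$ for the finitely many oriented edges of $T$ at $t$, and let $(B_i,A_i):=\alpha(x_i,t)$ for the $\fcut(C)$-tree $(T,\alpha)$ attached to $T$; thus the part $t\subset V(C)$ equals $\bigcap_{i\le d}A_i$, and the sets $A_i$ are the vertex sets of the finitely many components of $\tilde C-t$ (pulled back to $C$) together with $t$ itself. Each separation $(B_i,A_i)$ comes from a \emph{finite} bond $F_i=E(B_i,A_i)$ of $C$, because $T$ is finitely separating. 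Set $V_0:=V(G)\setminus C=\{u,v\}$ and note that the only edges of $G$ leaving $C[t]$ are the finitely many edges of $\bigcup_{i\le d}F_i$ together with the edges from $C[t]$ to $u$ and to $v$.

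First I would show that \emph{infinitely many} edges of $G$ leave $C[t]$ to $\{u,v\}$. Indeed, $C[t]$ is a finite union of parts-minus-$t$'s complements... more directly: every finite edge cut of $G$ separating a vertex $w\in t$ from the rest of $G$ would, after removing the finitely many edges in $\bigcup_i F_i$ and the at most finitely many $w$--$\{u,v\}$ edges it could use, still have to disconnect $w$ inside $C[t]$; but if only finitely many edges ran from $C[t]$ to $\{u,v\}$, then $C[t]$ together with these finitely many edges and the finitely many $F_i$ would be separated from $G-t$ by a finite cut, contradicting that $G$ is \iecon\ (as $t$ is non-empty and $G-t$ is non-empty: $G$ is infinite since it is \iecon). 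Hence infinitely many $G$-edges join $C[t]$ to $\{u,v\}$, and since there are only two vertices $u,v$, one of them — say $u$ — sends infinitely many edges in $G$ to $t$, which is the second assertion.

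Next I would prove $C[t]$ is \iecon. Suppose not: then some finite edge set $S\subset E(C[t])$ separates two vertices $a,b\in t$ within $C[t]$, splitting $V(t)$ into $P\ni a$ and $Q\ni b$. Now consider the same partition inside $G$: the edges of $G$ crossing between $P$ and $Q$ are exactly $S$ (no edge via $B_i$ can re-enter $t$ without crossing the finite bond $F_i$, but such a detour through $B_i$ would be a path in $G$ from $P$ to $Q$ staying outside $t$ except at its endpoints — I must rule these out). The key point is that $C[t]=\tilde C[t]$ is \emph{finitely separable} as a subgraph of the finitely separable graph $\tilde C$, so by Theorem~\ref{finSepSTsExist} applied to $\tilde C$... better: I would instead argue that a detour through some $B_i$ forces a crossing of $F_i$, so using $S\cup\bigcup_i F_i$ (still finite) plus controlling the $\{u,v\}$-edges: actually the clean route is that $P$ is separated from $Q$ in $C$ by $S\cup\bigcup_{i\le d}F_i\cup\{\text{edges between }B_i\text{'s}\}$ — but the $B_i$'s are pairwise non-adjacent in $C$ since $t$ separates them in $\tilde C$. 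So $P$ is separated from $Q$ in $C$, hence in $G$ after adding the finitely many edges from $t$ to $\{u,v\}$ that touch $P$; that is a finite cut of $G$ separating $a$ from $b$, contradicting \iecon ness of $G$.

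The main obstacle I expect is the bookkeeping in the last paragraph: carefully confirming that the components $A_i\setminus t$ of $\tilde C-t$ are pairwise non-adjacent in $C$ and cannot be used to build a $P$--$Q$ connection in $C$ avoiding $S$, so that a finite cut inside $C[t]$ genuinely lifts to a finite cut of $G$. This is where the precise definition of $\tilde C$ (edges between $\sim$-classes pulled back to finitely many edges in $C$, as established right after Theorem~\ref{finSepSTsExist}) and the finiteness of all fundamental cuts of $T$ must be combined; everything else is routine.
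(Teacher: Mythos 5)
Your overall route is the paper's: the union $\bigcup_{i}F_i$ of the finitely many finite fundamental cuts at $t$ separates the part $t$ from $C-t$; hence (a) if only finitely many edges ran from $t$ to $\{u,v\}$, the part would be cut off from the rest of $G$ by a finite cut, contradicting that $G$ is \iecon , and (b) a finite edge cut $S$ of $C[t]$ separating $a,b\in t$ extends by $\bigcup_{i}F_i$ to a finite edge cut of $C$ separating $a$ from $b$. Part (a) and the first half of (b) are correct as you state them.

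The genuine flaw is the last step of (b). Having separated $P$ from $Q$ in $C$ by the finite set $S\cup\bigcup_i F_i$, you pass to $G$ ``after adding the finitely many edges from $t$ to $\{u,v\}$ that touch $P$'' --- but there is no reason for these to be finitely many: $P$ may be infinite and every vertex of $P$ may be adjacent to $u$. Indeed, the second assertion of the very lemma you are proving guarantees that one of $u,v$ sends infinitely many edges into $t$, so the lift to a finite cut of $G$ fails precisely in the situation at hand. The repair is immediate and is what the paper does: you never need to return to $G$. The vertices $a$ and $b$ lie in the same part $t$ of $\tilde C$, that is, in the same ${\sim}_C$-class, so by definition they are \emph{not} finitely separable in $C$; the finite cut $S\cup\bigcup_i F_i$ of $C$ already yields the contradiction. (Two smaller remarks: your claim that the sets $B_i$ are pairwise non-adjacent in $C$ is neither needed nor true in general --- any $P$--$Q$ walk leaving $t$ must cross some $F_i$ both on leaving and on re-entering $t$, which is all the argument requires; and the paper additionally records that $t$ is infinite, since each of its vertices has infinite degree in $G$ but sends only finitely many edges outside $t$, a fact worth noting for the later uses of $C[t]$.)
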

\begin{proof}
As $t$ has finite degree in $T$, the finite fundamental cuts of the edges of $T$ incident with $t$ together give rise to a finite cut of $C$ with the part $t$ as one of its sides.
Thus, in the graph $G$ every vertex in the part $t$ can send only finitely many edges to $C-t$, at most one edge to each of $u$ and $v$, and some edges to the rest of the part~$t$.
As every vertex of the \iecon\ graph $G$ has infinite degree, it follows that the part $t$ must be infinite.
And since no two vertices in $t$ are finitely separable in $C$ while $t$ is separated from the rest of $C$ by a single finite cut, it follows that $C[t]$ is \iecon .
Finally, at least one of $u$ and $v$ sends infinitely many edges to the part $t$, because otherwise $t$ is separated from the rest of $G$ by a finite cut, contradicting that $G$ is \iecon .
\end{proof}

Here is the second main lemma of this section:

\begin{lemma}\label{fcutTreeArrowBarrage}
Suppose that $G$ is an \iecon\ graph, that $u,v$ are two distinct vertices of $G$, and that $C$ is a component of $G-u-v$ such that $u$ sends at least one edge to $C$.
If $\tilde{C}$ has a recursively prunable finitely separating rooted spanning tree $T$ such that $u$ sends no edges to parts $t\in T$ that are finite-degree nodes of $T$, then $G[C+u+v]$ contains an arrow barrage minor from $u$ to $v$.
\end{lemma}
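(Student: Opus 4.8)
The plan is to build the arrow barrage directly, taking its payloads to be the subgraphs $C[t]$ induced by suitable finite-degree parts $t$ of $T$. The point of the hypotheses is exactly that such $C[t]$ are good payloads: by Lemma~\ref{basicFact} each is \iecon\ and receives infinitely many edges of $G$ from $u$ or from $v$, and since $u$ sends no edges to finite-degree parts it must be $v$ that sends infinitely many. The head branch set will simply be $\{v\}$, and the nock branch set will be $\{u\}$ together with a connected subgraph $Q$ of $C$ that links the neighbour $h$ of $u$ in $C$ to a single part near which infinitely many of these payloads sit, while staying clear of the payloads themselves.

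First I would fix $h\in V(C)$ with $uh\in E(G)$ and let $\tau$ be the part of $T$ with $h\in\tau$; by hypothesis $\tau$ has infinite degree in $T$, and by Proposition~\ref{RecPrunableBinary} the tree $T$ contains no subdivision of the infinite binary tree. Let $W$ be the component of $\tau$ in the subgraph of $T$ induced by its infinite-degree nodes, so $W$ is a subtree of $T$ each of whose nodes has infinite degree in $T$. If every node of $W$ had only finitely many neighbours outside $W$, then every node of $W$ would have infinitely many neighbours inside $W$, forcing $W$ --- hence $T$ --- to contain a subdivision of the infinite binary tree, a contradiction. So some node $w_0\in W$ has infinitely many neighbours outside $W$, and each of those has finite degree in $T$ (an infinite-degree neighbour of $w_0$ would itself lie in $W$). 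Fixing infinitely many such finite-degree neighbours $\tau_1,\tau_2,\dots$ of $w_0$, the payloads will be the pairwise disjoint \iecon\ graphs $C[\tau_n]$, each disjoint from $\tau$ and from $w_0$ and each receiving infinitely many edges from $v$ as above.

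Next I would construct the backbone using Lemma~\ref{Taleph0BranchSetConstructionFragment}. Recall that the finitely separating spanning tree $T$ of $\tilde C$ induces a $\fcut(C)$-tree $(T,\alpha)$. Orienting each edge $\tau_n w_0$ of $T$ towards $w_0$ gives separations $\alpha(\tau_n,w_0)=(A_n,B_n)\in\vfcut(C)$ with $\tau_n\subseteq A_n$, with $w_0\subseteq B_n$, and with $\tau\subseteq B_n$ as well (the $T$-path from $\tau$ to $w_0$ stays in $W$ and so avoids the edge $\tau_nw_0$). The set $\{\,(A_n,B_n)\mid n\ge 1\,\}$ is an infinite star in $\vfcut(C)$, being a subset of the star $\alpha[\vF_{\! w_0}]$, so Lemma~\ref{Taleph0BranchSetConstructionFragment}, applied to $C$, this star, and $i_\ast=1$, yields an infinite index set $J\ni 1$ for which $Q:=C\bigl[\,\bigcap_{j\in J}B_j\,\bigr]$ is connected. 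Since $\tau\subseteq\bigcap_{j\in J}B_j$ we get $h\in Q$; since $w_0\subseteq\bigcap_{j\in J}B_j$, for each $j\in J$ the subgraph $Q$ contains the endvertex $a_j\in w_0$ of some edge $a_jb_j\in E(C)$ with $b_j\in\tau_j$; and $Q$ avoids every $C[\tau_j]$, $j\in J$, because $\tau_j\subseteq A_j$ is disjoint from $B_j$. Restricting from now on to the indices in $J$, I would relabel these payloads as $H_1,H_2,\dots$.

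Finally I would assemble the minor: take $\{u\}\cup V(Q)$ (connected via the edge $uh$) as the nock branch set, $\{v\}$ as the head branch set, and the vertices of the $H_n=C[\tau_n]$ as singleton branch sets --- all pairwise disjoint and connected. For each $n$, the edge $a_nb_n$ is the single nock--payload edge of the $n$-th arrow, with $b_n$ playing the role of the vertex $h$ in the definition of an arrow, and after discarding at most the edge $vb_n$ there remain infinitely many edges from $v$ into $H_n-b_n$; so these branch sets and edges exhibit inside $G[C+u+v]$ the union of the arrows with payloads $H_n$, that is, an arrow barrage from $u$ to $v$. I expect the only genuinely delicate point to be the extraction in the second step --- turning the absence of a binary-tree subdivision into a single infinite-degree part $w_0$ with infinitely many finite-degree neighbours --- together with keeping the backbone $Q$ disjoint from the chosen payloads, for which Lemma~\ref{Taleph0BranchSetConstructionFragment} is precisely the tool needed; everything after that is routine assembly and bookkeeping.
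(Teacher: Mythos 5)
Your proof is correct, and its overall skeleton matches the paper's: both arguments produce an infinite family of finite-degree parts to serve as payloads (each \iecon\ and receiving infinitely many edges from $v$, by Lemma~\ref{basicFact} together with the hypothesis that $u$ sends no edges to finite-degree parts), both pass to an infinite substar via Lemma~\ref{Taleph0BranchSetConstructionFragment} to make the nock connected, and both finish with the same assembly. Where you genuinely diverge is in the combinatorial step that locates those payloads. The paper uses the recursive pruning labels directly: it takes $X$ to be the set of minimal $0$-labelled nodes of $T$, notes that $X$ is a maximal antichain with every node of $\uc{X}$ of degree at most two in $T$ (so $u$ sends no edges into $\uc{X}$, whence $X$ must be infinite because $u$ does send an edge to $C$), and works with the star of separations $\alpha(x,p_x)$, $x\in X$ --- a star not centred at any single node of $T$. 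You instead invoke recursive prunability only through Proposition~\ref{RecPrunableBinary}: inside the component $W$ of infinite-degree nodes containing the part of $u$'s neighbour, the absence of a binary-tree subdivision forces a hub $w_0$ with infinitely many finite-degree neighbours $\tau_1,\tau_2,\dots$, and your star is the corresponding substar of $\alpha[\vF_{\! w_0}]$. Both routes are sound; yours has the small advantage that the backbone $Q$ lies in $\bigcap_{j\in J}B_j\supseteq \tau\cup w_0$, so its disjointness from the payloads and the existence of the nock--payload edges $a_jb_j$ are immediate, whereas the paper's antichain argument spreads the payloads over the whole tree and gets the corresponding facts from the antichain property of $X$.
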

\begin{proof}
Given $T$, we let $X\subset V(T)$ consist of the $0$-labelled nodes of $T$ that are minimal in the tree-order.
Then the nodes in $X$ form a maximal antichain in the tree-order, giving $T=\uc{X}\cup\dc{X}$, as $T$ is recursively prunable.
Note that all the nodes in $\uc{X}$ have degree at most two in~$T$.
We claim that $X$ must be infinite.
Indeed, if $X$ is finite, then so is $\dc{X}$, and in particular $T$ is locally finite.
But then $u$ may send 
no edges to $C$  
by assumption, contradicting our other assumption that $u$ does send an edge to~$C$.
Therefore, $X$ must be infinite.

Recall that the finitely separating spanning tree $T\subset\tilde{C}$ gives rise to a $\fcut(C)$-tree $(T,\alpha)$.
For every $x\in X$ let us write $(A_x,B_x):=\alpha(x,p_x)$ for the predecessor $p_x$ of $x$ in $T$.
As $u$ sends some edges to $C$, but 
none to the parts in $\uc{X}$, there is a neighbour $w$ of $u$ in the part $\bigcap_{x\in X}B_x$ of the star $\sigma:=\{\,(A_x,B_x)\mid x\in X\,\}$.
By Lemma~\ref{Taleph0BranchSetConstructionFragment} we find an infinite subset $Y\subset X$ such that the part of the infinite substar $\sigma':=\{\,(A_y,B_y)\mid y\in Y\,\}\subset\sigma$ is connected.
Note that $w$ is contained in the part of $\sigma'$ because the part of $\sigma$ is included in the part of $\sigma'$.
We now find an arrow barrage minor from $u$ to $v$ in $G[C+u+v]$ as follows.
For the branch set of the nock we take the part of $\sigma'$ plus the vertex~$u$.
For the branch set of the head we take~$\{v\}$.
The payloads we let be modelled by the subgraphs~$C[y]$, one for every~$y\in Y$ (here, each $C[y]$ is \iecon\ and sends infinitely many edges in $G$ to $v$ by Lemma~\ref{basicFact} and $Y\subset X$).
\end{proof}

\subsection{Football minors}

We are almost ready now to prove Theorem~\ref{PowerHorse}.
But first, we prove an intermediate proposition, which requires the following lemma and definition:

\begin{lemma}\label{finiteEdgeContractionOK}
If $G$ is an \iecon\ graph and $G'$ is 
obtained from $G$ by contracting disjoint finite vertex sets that are possibly disconnected, then $G'$ is \iecon\ as well.
\end{lemma}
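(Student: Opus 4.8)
The plan is to show that contracting a single disjoint finite vertex set preserves infinite edge-connectivity, and then iterate (or rather handle all of them simultaneously, since they are disjoint and the argument is local). So suppose $G$ is \iecon{} and $G'$ is obtained from $G$ by contracting a family $(W_i)_{i\in I}$ of pairwise disjoint finite vertex sets, where each $W_i$ need not be connected; write $w_i$ for the branch vertex of $G'$ corresponding to $W_i$, and note that each vertex of $G'$ is either such a $w_i$ or an original vertex of $G$ outside $\bigcup_i W_i$. It suffices to prove that for any two distinct vertices $x,y$ of $G'$ and any $k\in\N$ there are $k$ edge-disjoint $x$--$y$ paths in $G'$; by Menger's theorem this is equivalent to showing that no finite edge cut of $G'$ separates $x$ from $y$.

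First I would fix a finite set $F$ of edges of $G'$ and an $x$--$y$ pair, and argue that $F$ does not separate $x$ from $y$ in $G'$. The edges of $G'$ correspond bijectively to the edges of $G$ that are not internal to any single $W_i$, so $F$ corresponds to a finite edge set $F_0\subset E(G)$. Now pick a representative vertex in $G$ for each of $x$ and $y$: if $x=w_i$ choose any $x_0\in W_i$, otherwise $x_0:=x$, and similarly $y_0$. Since $G$ is \iecon{}, in particular it is $(|F_0|+1)$-edge-connected, so there is an $x_0$--$y_0$ path $P$ in $G$ avoiding $F_0$. The image of $P$ under the contraction map is a walk in $G'$ from $x$ to $y$ (the map sends $V(W_i)$ to $w_i$ and is the identity elsewhere, so edges of $P$ either disappear — those internal to some $W_i$ — or map to edges of $G'$, none of which lie in $F$ since $P$ avoids $F_0$). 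A walk from $x$ to $y$ in $G'$ avoiding $F$ contains an $x$--$y$ path avoiding $F$; hence $F$ does not separate $x$ and $y$. As $F$ was an arbitrary finite edge set, $x$ and $y$ are not finitely separated in $G'$, and since they were arbitrary, $G'$ is \iecon{}.

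The one point that needs a little care — and the only place I would slow down — is checking that the contraction map really does carry paths of $G$ to walks of $G'$ in a way that cannot introduce a cut edge of $F$: an edge $uv\in E(P)$ with $u\in W_i$ and $v\in W_j$ for $i\neq j$ maps to the edge $w_iw_j$, which is a genuine edge of $G'$ and, since $uv\notin F_0$, does not lie in $F$; an edge with exactly one endpoint in some $W_i$ maps similarly to an edge incident with $w_i$; and an edge with both endpoints in the same $W_i$ simply becomes a loop that we discard, shortening the walk. Finiteness of each $W_i$ is what guarantees $F_0$ is finite relative to $F$ (actually $|F_0|=|F|$), and possible disconnectedness of the $W_i$ plays no role in this argument since we only ever need a path in $G$ between chosen representatives, not within a branch set. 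This is the main (and essentially only) obstacle, and it is routine; everything else is bookkeeping.
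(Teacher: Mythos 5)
Your argument is sound in substance and reaches the conclusion by a slightly different route than the paper. The paper works entirely inside $G$: it greedily builds infinitely many edge-disjoint $\check{x}$--$\check{y}$ paths, where at step $m$ the new path is required to avoid \emph{all} edges of $G$ running inside the union $X_m$ of the finitely many branch sets met by the earlier paths; this is exactly what guarantees that the projected paths in $G'$ are edge-disjoint even though several $G$-edges may project to the same $G'$-edge. You instead argue dually, via the cut characterisation: no finite edge set $F\subset E(G')$ separates two vertices, because $F$ pulls back to a finite edge set $F_0$ of $G$ which the $(|F_0|+1)$-edge-connectedness of $G$ lets you route around. Both proofs hinge on the same finiteness fact (finitely many finite branch sets carry only finitely many edges); yours is arguably more modular, at the cost of invoking the equivalence between ``no finite edge cut separates $x$ and $y$'' and ``there are infinitely many edge-disjoint $x$--$y$ paths'', which for edge-connectivity is itself the same easy greedy argument.

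One claim needs correcting, though the proof survives. The edges of $G'$ do \emph{not} correspond bijectively to the edges of $G$ that are not internal to a single $W_i$: if two vertices of $W_i$ send edges to the same vertex outside $W_i$ (or to the same $W_j$), these parallel preimages are identified in $G'$, so the projection from the non-internal edges of $G$ onto $E(G')$ is surjective but not injective, and $|F_0|=|F|$ is false in general. This matters: if $F_0$ contained only one preimage per edge of $F$, a path $P$ avoiding $F_0$ could still use a parallel copy of some $f\in F$ and project onto $f$, breaking the avoidance step. The fix is the observation you yourself make at the end: take $F_0$ to be the \emph{full} preimage of $F$, which is still finite precisely because each $W_i$ is finite (an edge of $G'$ incident with $w_i$ has at most $|W_i|$, respectively $|W_i|\cdot|W_j|$, preimages). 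With that reading, ``$uv\notin F_0$, hence its image does not lie in $F$'' is valid and the argument goes through.
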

\begin{proof}
To show that $G'$ is \iecon , consider any two distinct vertices $x$ and $y$ of $G'$, and choose vertices $\check{x}\in x$ and $\check{y}\in y$ of $G$.
Now, in the \iecon\ graph $G$ we choose infinitely many pairwise edge-disjoint $\check{x}$--$\check{y}$ paths $P_0,P_1,\ldots$ as follows.
To get started, choose $P_0$ arbitrarily.
At step $n>0$, consider all the branch sets that are met by some $P_k$ with $k<n$, and let $X_n$ be their union.
Then $X_n$ is finite, and we let $P_n$ be an $\check{x}$--$\check{y}$ path in $G$ that avoids all the finitely many edges of $G$ running inside~$X_n$.

Now every $\check{x}$--$\check{y}$ path $P_n\subset G$ gives rise to some $x$--$y$ path $P_n'\subset G'$ satisfying $E(P_n')\subset E(P_n)$ by a slight abuse of notation.
We claim that the paths $P_0',P_1',\ldots$ are all edge-disjoint.
For this, consider any two paths $P_n'$ and $P_m'$ with $n<m$, and let $e$ be any edge of~$P_n'$.
Then~$e$, viewed as an edge of $G$, runs between two branch sets that $P_n$ meets because it uses~$e$.
Hence these two branch sets are both included in $X_m$, and so $P_m$ does not use any of the edges running between them.
In particular, $P_m'$ does not use $e$.
\end{proof}

\begin{definition}[Football, Muscle]
Suppose that $u$ and $v$ are two distinct vertices.

A \emph{football} with \emph{endvertices} $u$ and $v$ is an \iecon\ graph $G$ containing $u$ and $v$ such that $G-u-v$ is again \iecon .

When we say that some graph contains a football minor \emph{connecting} two vertices $x$ and $y$ we mean that the graph contains a football minor with some endvertices $u$ and $v$ such that the branch set corresponding to $u$ contains $x$ and the branch set corresponding to $v$ contains $y$ (or vice versa).

A \emph{muscle} with \emph{endvertices} $u$ and $v$ is a graph $G$ that is obtained from the vertices $u$ and $v$ by disjointly adding an \iecon\ graph $H$ and adding one $u$--$H$ edge $ux$ and one $v$--$H$ edge $vy$ such that $x\neq y$.

A \emph{muscle barrage} with \emph{endvertices} $u$ and $v$ is a countably infinite union $\bigcup_{n\in\N}G_n$ of muscles $G_n$ with endvertices $u$ and $v$ such that $G_n$ and $G_m$ do not meet in any vertices other than $u$ and $v$ for all $n\neq m$.

Muscle (barrage) minors \emph{connecting} two vertices are defined like for footballs.
\end{definition}

\begin{proposition}\label{FindingAnArrowBarrage}
Suppose that $G$ is an \iecon\ graph, that $u,v$ are two distinct vertices of $G$, and that $C$ is a component of $G-u-v$ to which both $u$ and $v$ do send some edges.
Then at least one of the following assertions holds:
\begin{enumerate}
\item $G[C+u+v]$ contains a $\TTT$ minor;
\item $G[C+u+v]$ contains a football minor connecting $u$ and $v$;
\item $G[C+u+v]$ contains an arrow barrage minor either from $u$ to $v$ or from $v$~to~$u$; in particular, $G[C+u+v]$ contains a muscle barrage minor connecting $u$ and $v$.
\end{enumerate}
\end{proposition}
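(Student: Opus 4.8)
The plan is to apply Theorem~\ref{finSepSTsExist} to $\tilde C$ to obtain a finitely separating spanning tree $T$, root it arbitrarily, and then split into two cases according to Proposition~\ref{RecPrunableBinary}: either $T$ contains a subdivision of the infinite binary tree, or $T$ is recursively prunable. In the first case, Lemma~\ref{fcutTreeContainsBinaryGivesTTT} immediately delivers a $\TTT$ minor in $G[C+u+v]$, which is outcome~(i). So the real work is the recursively prunable case, where I must produce either a football minor connecting $u$ and $v$ (outcome~(ii)) or an arrow barrage minor from one of $u,v$ to the other (outcome~(iii)); the final clause of~(iii), that an arrow barrage minor yields a muscle barrage minor connecting $u$ and $v$, is routine (each arrow contains a muscle: use the payload $H$ together with the $u$--$h$ edge and one of the $v$--$(H-h)$ edges, and $h\neq$ the chosen neighbour of $v$ since there are infinitely many such).

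**The recursively prunable case.** Here I would run recursive pruning on $T$ and look at the finite-degree nodes, i.e. the nodes of $\uc X$ in the notation of the proof of Lemma~\ref{fcutTreeArrowBarrage}. The dichotomy I want is: either at least one of $u,v$ sends no edges to any finite-degree part of $T$, or both of them do send an edge to some finite-degree part. In the former subcase, say $u$ sends no edges to finite-degree parts; since by hypothesis $u$ does send an edge to $C$, Lemma~\ref{fcutTreeArrowBarrage} applies verbatim (with this $T$) and gives an arrow barrage minor from $u$ to $v$, which is outcome~(iii). In the latter subcase I aim for a football minor: pick a finite-degree node $t_u$ to which $u$ sends an edge and a finite-degree node $t_v$ to which $v$ sends an edge. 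By Lemma~\ref{basicFact} each part $C[t_u], C[t_v]$ is \iecon. If $t_u\neq t_v$ I can take disjoint branch sets $C[t_u]\cup\{u\}$ and $C[t_v]\cup\{v\}$ — but these need not have an \iecon\ complement touching both, so this does not directly give a football. The cleaner route is to absorb everything into one football: take a finite-degree part $C[t]$ that some $w\in\{u,v\}$, say $u$, feeds, enlarge it by connecting it through $T$ to a finite-degree part fed by $v$ and adding $u$ and $v$; since finite-degree parts are \iecon\ and the connecting structure is along a finite path in $T$ (hence attaches via finite cuts on each side), the whole thing contracts to an \iecon\ graph $B$ containing $u,v$ whose deletion of $u,v$ is still \iecon\ (the two \iecon\ parts survive, joined through the tree-path, using Lemma~\ref{finiteEdgeContractionOK} to handle the finitely many contractions). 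That is a football minor connecting $u$ and $v$, outcome~(ii).

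**The main obstacle** I expect is making the football construction in the last subcase actually rigorous: one has to be careful that the two finite-degree parts fed by $u$ and by $v$ can be linked inside $C$ by a structure that remains \iecon\ after deleting $u$ and $v$, and that only finitely many finite contractions are used so that Lemma~\ref{finiteEdgeContractionOK} applies. The likely fix is to route the link through the tree $T$: the unique $t_u$--$t_v$ path in $T$ picks out finitely many parts, and along it one greedily builds, as in the proof of Lemma~\ref{Taleph0BranchSetConstructionFragment}, a connected subgraph of $C$ meeting the relevant boundaries; because every part on this path is either $C[t_u]$, $C[t_v]$, or has finite degree (hence \iecon\ by Lemma~\ref{basicFact}) or is separated off by finite cuts, contracting the finite connector pieces and invoking Lemma~\ref{finiteEdgeContractionOK} keeps \iecon ness. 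One also needs the edges $ux$ and $vy$ landing in distinct branch sets, which is arranged by choosing $t_u\neq t_v$ when possible and, when $t_u=t_v$ is forced, by noting $C[t_u]$ is \iecon\ so $u$ and $v$ have distinct neighbours there and the branch set $C[t_u]$ alone, with $u$ and $v$ attached, is already a football.
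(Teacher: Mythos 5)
Your skeleton matches the paper's up to the binary-tree/recursively-prunable split, and your first subcase (one of $u,v$ sends no edges to finite-degree parts, so Lemma~\ref{fcutTreeArrowBarrage} applies) is exactly right. But the dichotomy you use in the recursively prunable case has a genuine gap on its other branch. A football with endvertices $u$ and $v$ must \emph{itself} be infinitely edge-connected, so $u$ and $v$ must each send infinitely many edges into it; the hypothesis ``both $u$ and $v$ send \emph{an} edge to some finite-degree part'' is far too weak to deliver that. If $u$ sends exactly one edge to a finite-degree part $C[t_u]$, then no amount of linking $C[t_u]$ to $C[t_v]$ through $T$ will prevent $u$ from being cut off by a finite edge cut. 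The obstacle you flag (keeping the deleted graph \iecon\ after contracting the connector) is the easy part --- the paper handles it by contracting a single $t_u$--$t_v$ path. The condition the paper actually uses for the football outcome is that there are finite-degree parts $t_u$ and $t_v$ receiving \emph{infinitely many} edges from $u$ and from $v$ respectively; the same issue sinks your $t_u=t_v$ remark at the end.

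Once the football case is restricted to that stronger hypothesis, several cases are left uncovered by your argument, and they are precisely where the paper does most of its work. If $u$ sends edges to infinitely many finite-degree parts but never infinitely many to any single one, the paper builds an arrow barrage directly (each such part is an arrow's payload, with $v$ supplying the infinitely many head-edges via Lemma~\ref{basicFact}). If $u$ sends a nonzero but finite total number of edges to finite-degree parts, the paper deletes those finitely many edges and then invokes Lemma~\ref{fcutTreeArrowBarrage} --- but this step needs $u$ to still send infinitely many edges to $C$, which the proposition's hypothesis (``some edges'') does not guarantee. That is why the paper opens with a full paragraph reducing to the case that both $u$ and $v$ send infinitely many edges to $C$: it passes to a neighbour $u'$ of $u$ in $C$ and either replaces $(G,u,v,C)$ by $(G[C+v],u',v,C')$ or finds an arrow barrage spread across infinitely many components of $G[C+v]-u'-v$. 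Your proposal omits this reduction entirely, and without it the argument cannot be completed along the lines you sketch.
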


\begin{proof}
We may assume that both $u$ and $v$ send infinitely many edges to $C$.
Indeed, if---say---$u$ sends only finitely many edges to $C$, 
then consider the \iecon\ graph $G':=G[C+v]$ and let $u'$ be one of the neighbours of $u$ in~$C$.
If there is a component $C'$ of $G'-u'-v$ to which both $u'$ and $v$ send infinitely many edges, then we may replace $G,u,v,C$ with $G',u',v,C'$.
Hence we may assume that there are infinitely many components $C'_0,C'_1,\ldots$ of $G'-u'-v$ such that, without loss of generality, $u'$ sends only finitely many but at least one edge to each $C'_n$ while $v$ sends infinitely many edges to each~$C'_n$.

By Theorem~\ref{finSepSTsExist}, all $\tilde{C}'_n$ have finitely separating spanning trees.
If one $\tilde{C}'_n$ has a finitely separating spanning tree that contains a subdivision of the infinite binary tree, then Lemma~\ref{fcutTreeContainsBinaryGivesTTT} provides a $\TTT$ minor witnessing~(i).
Otherwise, by Proposition~\ref{RecPrunableBinary}, every $\tilde{C}'_n$ has a rooted finitely separating spanning tree $T_n$ that is recursively prunable.
Then we pick for every $n$ a finite-degree node $t_n\in T_n$, and we let $P_n$ be a path in $C'_n$ that links a neighbour of $u'$ to the subgraph $C'_n[t_n]$ such that $P_n$ has only its endvertex $x_n$ in $C'_n[t_n]$.
Now we obtain an arrow barrage minor in $G[C+u+v]$ from $u$ to $v$ that is sought in~(iii), as follows.
For the branch set of the arrow barrage's nock we take $\{u,u'\}\cup \bigcup_{n\in\N}V(P_n\mathring{x}_n)$.
The arrows' payloads we let be modelled by the \iecon\ subgraphs~$C'_n[t_n]$ (see Lemma~\ref{basicFact}).
And for the branch set of the arrow barrage's head we take~$\{v\}$ (that $v$ sends infinitely many edges to each part $t_n$ is ensured by Lemma~\ref{basicFact} and the assumption that $u'$ sends only finitely many edges to each $C_n'$).

Therefore, we may assume that both $u$ and $v$ send infinitely many edges to $C$.
By Theorem~\ref{finSepSTsExist} we may let $T$ be a finitely separating spanning tree of $\tilde{C}$, rooted arbitrarily.
We make the following two observations.

If $T$ contains a subdivision of the infinite binary tree, then Lemma~\ref{fcutTreeContainsBinaryGivesTTT} yields a $\TTT$ minor giving~(i).

If $T$ has finite-degree nodes $t_u$ and $t_v$ (possibly $t_u=t_v$) such that $u$ sends infinitely many edges to the part $t_u\subset V(C)$ in $G$ and $v$ sends infinitely many edges to the part $t_v\subset V(C)$ in $G$, then we deduce~(ii), as follows.
By Lemma~\ref{basicFact} both $C[t_u]$ and $C[t_v]$ are \iecon .
If $t_u=t_v$, then $G[t_u+u]\cup G[t_v+v]$ is a football subgraph connecting $u$ and $v$.
Otherwise $t_u$ and $t_v$ are distinct.
Then we let $P$ be any $t_u$--$t_v$ path in~$C$, and $(G[t_u+u]\cup G[t_v+v]\cup P)/V(P)$ is a football minor connecting $u$ and $v$.

By these two observations and Proposition~\ref{RecPrunableBinary}, we may assume that $T$ is recursively prunable and that, without loss of generality, whenever $t\in T$ has finite degree then $v$ does send infinitely many edges to the part $t\subset V(C)$ in $G$ while $u$ may send only finitely many edges to it.

If $u$ sends edges in $G$ to infinitely many parts $t\in T$ that 
have finite degree in~$T$, then we find an arrow barrage minor from $u$ to $v$ giving~(iii), because 
$v$ sends infinitely many edges to all of the \iecon\ subgraphs $C[t]$ (cf.~Lemma~\ref{basicFact}) by our assumption above.
Otherwise $u$ sends, in total, only finitely many edges in $G$ to the parts $t\in T$ 
that have finite degree in~$T$.
Since $u$ sends infinitely many edges in $G$ to $C$, this means that we may assume without loss of generality that $u$ sends no edges to the parts $t\in T$ that have finite degree in~$T$.
Then Lemma~\ref{fcutTreeArrowBarrage} yields an arrow barrage minor from $u$ to $v$ giving~(iii).
\end{proof}

Now we have all we need to prove the main result of the section, Theorem~\ref{PowerHorse}.
In its proof, we will face the construction of a minor in countably many steps.
The following notation and lemma will help us to keep the technical side of this construction to the minimum.

Suppose that $G$ and $H$ are two graphs with $H$ a minor of $G$.
Then there are a vertex set $U\subset V(G)$ and a surjection $f\colon U\to V(H)$ such that the preimages $f^{-1}(x)\subset U$ form the branch sets of a model of $H$ in $G$.
A \emph{minor-map} $\varphi\colon G\rminor H$ formally is such a pair $(U,f)$.
Given $\varphi=(U,f)$ we address $U$ as $V(\varphi)$ and we write $\varphi=f$ by abuse of notation.
Usually, we will abbreviate `minor-map' as `map'.
If we are given two maps $\varphi\colon G\rminor H$ and $\varphi'\colon H\rminor H'$, then these give rise to another map $\psi\colon G\rminor H'$ by letting $V(\psi):=\varphi^{-1}(\varphi'^{\;-1}(V(H'))$ and $\psi:=\varphi'\circ (\varphi\rest V(\psi))$.
On the notational side we write $\varphi'\diamond\varphi=\psi$.

\begin{lemma}\label{limitMinor}
If $G_0,G_1,\ldots$ and $H_0\subset H_1\subset\cdots$ are sequences of graphs \mbox{$H_n\subset G_n$} with maps $\varphi_n\colon G_n\rminor G_{n+1}$ that restrict to the identity on $H_n$, then $G_0\rminor \bigcup_{n\in\N}H_n$.
\end{lemma}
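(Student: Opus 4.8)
\textbf{Proof plan for Lemma~\ref{limitMinor}.}

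The plan is to build a single minor-map $G_0 \rminor \bigcup_{n\in\N} H_n$ by taking a ``limit'' of the composed maps $\varphi_{n}\diamond\cdots\diamond\varphi_0$, using the stabilisation guaranteed by the hypothesis that each $\varphi_n$ restricts to the identity on $H_n$. First I would set $H := \bigcup_{n\in\N} H_n$ and, for each $n$, let $\psi_n := \varphi_{n-1}\diamond\cdots\diamond\varphi_0 \colon G_0 \rminor G_n$ (with $\psi_0 := \id$), so that $\psi_{n+1} = \varphi_n \diamond \psi_n$. The key observation is that since $\varphi_n$ is the identity on $H_n$ and $H_n \subset G_n$, the branch sets of $\psi_{n+1}$ lying over $V(H_n)$ agree with those of $\psi_n$: concretely, for a vertex $x \in V(H_n)$ we have $\psi_{n+1}^{-1}(x) = \psi_n^{-1}(x)$, because composing with $\varphi_n$ does not alter the preimage of a vertex on which $\varphi_n$ acts trivially. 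Thus the branch sets over any fixed vertex $x$ stabilise: once $n$ is large enough that $x \in V(H_n)$, the set $\psi_m^{-1}(x) \subset V(G_0)$ is independent of $m \ge n$.

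Next I would define the limiting map $\varphi \colon G_0 \rminor H$ by declaring, for each $x \in V(H)$, its branch set to be $\varphi^{-1}(x) := \psi_n^{-1}(x)$ for any (equivalently, all sufficiently large) $n$ with $x \in V(H_n)$; formally $V(\varphi) := \bigcup_{x\in V(H)} \varphi^{-1}(x)$ and $\varphi$ sends each vertex of this set to the corresponding $x$. One then checks this really is a minor-map of $H$ in $G_0$: the branch sets are pairwise disjoint (disjointness of $\psi_n^{-1}(x)$ and $\psi_n^{-1}(y)$ holds for each fixed $n$ large enough to contain both $x$ and $y$ in $V(H_n)$, and by stabilisation this persists), each branch set $\varphi^{-1}(x)$ is connected in $G_0$ (it equals a branch set of the honest minor-map $\psi_n$), and for every edge $xy \in E(H)$ there is an edge of $G_0$ between $\varphi^{-1}(x)$ and $\varphi^{-1}(y)$ (choose $n$ with $xy \in E(H_n) \subset E(G_n)$; then $\psi_n$, being a minor-map onto $G_n$, supplies such an edge, and the relevant branch sets are exactly $\varphi^{-1}(x)$, $\varphi^{-1}(y)$ by stabilisation). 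This exhibits $H$ as a minor of $G_0$.

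The only genuinely delicate point is the compatibility bookkeeping: one must be careful that ``for all sufficiently large $n$'' can be chosen uniformly when finitely many vertices or a single edge of $H$ are involved, which is immediate since the $H_n$ are nested and each vertex/edge of $H$ lies in some $H_n$. I expect the main obstacle to be purely notational — spelling out that $\psi_{n+1}^{-1}(x) = \psi_n^{-1}(x)$ for $x \in V(H_n)$ directly from the definition of $\diamond$ (namely $V(\psi_{n+1}) = \psi_n^{-1}(\varphi_n^{-1}(V(G_{n+1})))$ and $\psi_{n+1} = \varphi_n \circ (\psi_n \rest V(\psi_{n+1}))$), together with $\varphi_n(x) = x$, forces the preimage of $x$ to be unchanged. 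Once that identity is in hand, the construction of $\varphi$ and the verification that it is a valid minor-map are routine, and the lemma follows.
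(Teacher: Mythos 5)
Your overall architecture---compose the $\varphi_n$ into maps $\psi_n\colon G_0\rminor G_n$ and read off the branch set of each vertex of $H:=\bigcup_{n\in\N}H_n$ from these compositions---is the same as the paper's, but your key claim is false, and this is a genuine gap rather than bookkeeping. You assert that for $x\in V(H_n)$ one has $\psi_{n+1}^{-1}(x)=\psi_n^{-1}(x)$ ``because composing with $\varphi_n$ does not alter the preimage of a vertex on which $\varphi_n$ acts trivially''. That $\varphi_n$ restricts to the identity on $H_n$ means only that $\varphi_n(x)=x$ for $x\in V(H_n)$; it does \emph{not} mean $\varphi_n^{-1}(x)=\{x\}$. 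The branch set $\varphi_n^{-1}(x)$ may contain further vertices of $G_n$ outside $H_n$, and then $\psi_{n+1}^{-1}(x)=\psi_n^{-1}\bigl(\varphi_n^{-1}(x)\bigr)\supsetneq\psi_n^{-1}(x)$. This is not a pathological case: in both applications of the lemma in this paper the maps $\varphi_n$ absorb new material into the branch sets of old vertices (for instance, in the proof of Theorem~\ref{mainresult} the plow minor found inside $A_{xy}$ has endvertex branch sets containing $x$ and $y$ together with further vertices of $A_{xy}$), so the preimages of a fixed $x$ genuinely grow and never stabilise.

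The repair is exactly what the paper does: once $x\in H_n$, the sets $V_x^n:=\psi_n^{-1}(x)$ form an \emph{ascending} (not eventually constant) chain, and the branch set must be taken as the union $V_x:=\bigcup_n V_x^n$. Your three verifications then survive, but each has to be run in ``ascending union'' form rather than ``equals a single branch set of some $\psi_n$'' form: $V_x$ is connected as a nested union of connected sets; $V_x$ and $V_y$ are disjoint because any common vertex would lie in $V_x^m\cap V_y^m$ for a single sufficiently large $m$, which is empty; and an edge of $G_0$ between $V_x^n$ and $V_y^n$ (which exists as soon as $xy\in H_n$) is in particular a $V_x$--$V_y$ edge. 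With that correction the argument matches the paper's proof.
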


\begin{proof}
Recursively, each map $\varphi_n\colon G_n\rminor G_{n+1}$ gives rise to a map $\hat{\varphi}_n\colon G_0\rminor G_{n+1}$ via $\hat{\varphi}_0:=\varphi_0$ and $\hat{\varphi}_{n+1}:=\varphi_{n+1}\diamond\hat{\varphi}_n$.
For every $n\in\N$ we write $V_x^n=\hat{\varphi}_n^{-1}(x)$ for all vertices $x\in H_{n+1}$.
For every vertex $x\in H:=\bigcup_{n\in\N}H_n$ we denote by $N(x)$ the least number $n$ with $x\in H_n$.
As the maps $\varphi_n$ restrict to the identity on $H_n$, for every vertex $x\in H$ the vertex sets $V_x^n$ form an ascending sequence $V_x^{N(x)}\subset V_x^{N(x)+1}\subset\cdots$ whose overall union we denote by $V_x$.
We claim that the vertex sets $V_x$ form the branch sets of an $H$ minor in~$G_0$.

Indeed, every branch set $V_x$ is non-empty and connected in $G_0$ because all $V_x^n$ are.
If $xy$ is an edge of $H$, then $G_0$ contains a $V_x^n$--$V_y^n$ edge as soon as $xy\in H_n$, and this edge is a $V_x$--$V_y$ edge due to the inclusions $V_x^n\subset V_x$ and $V_y^n\subset V_y$.
It remains to show that $V_x$ and $V_y$ are disjoint for distinct vertices $x,y\in H$.
This follows at once from the vertex sets $V_x^n$ and $V_y^n$ being disjoint for all $n$ and the definition of $V_x$ and $V_y$ as ascending unions of these vertex sets.
\end{proof}

Finally, we prove the main result of the section:

\begin{theorem}\label{PowerHorse}
Suppose that $G$ is any \iecon\ graph, that $u,v$ are two distinct vertices of $G$, and that $C$ is a component of $G-u-v$ to which both $u$ and $v$ do send some edges.
Then at least one of the following assertions holds:
\begin{enumerate}
\item $G[C+u+v]$ contains a $\TTT$ minor;
\item $G[C+u+v]$ contains a football minor connecting $u$ and $v$.
\end{enumerate}
\end{theorem}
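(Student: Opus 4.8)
The plan is to start from the observation that Theorem~\ref{PowerHorse} has exactly the hypotheses of Proposition~\ref{FindingAnArrowBarrage}, whose conclusions~(i) and~(ii) coincide verbatim with ours. So the entire task is to upgrade the remaining case~(iii) of that proposition---an arrow barrage minor from $u$ to $v$ or from $v$ to $u$, and in particular a muscle barrage minor connecting $u$ and $v$---to outcome~(i) or~(ii). Since the hypotheses and both conclusions of Theorem~\ref{PowerHorse} are symmetric in $u$ and $v$, I may assume the arrow barrage runs from $u$ to $v$.

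The difficulty is that one cannot read a football off a barrage directly: in a barrage $\bigcup_{n}A_n$ the payloads $H_n$ are \iecon\ but pairwise meet only in $u$ and $v$, so every connected subgraph avoiding $\{u,v\}$---in particular the core $\Phi-B_p-B_q$ of any football minor $\Phi$ one hopes to extract---lies inside a single payload $H_n$, whereas $u$ sends only one edge into $H_n$, so the branch set $B_p\ni u$ cannot reach that core without fanning out inside $H_n$ from its attachment vertex $h_n$. I would therefore descend into one payload. Fix $H_0$, put $Z_0:=N(v)\cap(H_0-h_0)$ (an infinite set), and let $H_0^{+}$ be $H_0$ with $v$ re-attached to all of $Z_0$; then $H_0^{+}$ is again \iecon\ and, being isomorphic to $A_0-u$, is a minor of $G[C+u+v]$. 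Because $H_0$ is \iecon, every component of $H_0^{+}-h_0-v=H_0-h_0$ receives infinitely many edges from $h_0$; choosing a component $D_0$ that meets $Z_0$, both $h_0$ and $v$ send edges to $D_0$, so Proposition~\ref{FindingAnArrowBarrage} applies to $(H_0^{+},h_0,v,D_0)$.

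If that application yields~(i), there is a $\TTT$ minor inside $H_0^{+}$, hence inside $G[C+u+v]$, and we are done. If it yields~(ii), there is a football minor connecting $h_0$ and $v$ inside $H_0^{+}$; prepending $u$ to the branch set containing $h_0$ along the edge $uh_0$ (legitimate, since that branch set stays connected and $u$ lies in no other branch set) produces a football minor connecting $u$ and $v$ inside $G[C+u+v]$, and again we are done. The troublesome case is that one lands in~(iii) once more: an arrow barrage minor connecting $h_0$ and $v$ inside $H_0^{+}$, with payloads that are \iecon\ minors inside $H_0$. This is the original configuration reproduced one level deeper, with one of the two distinguished branch sets (the one currently playing the barrage's head, i.e.\ the ``rich'' side, reached by infinitely many edges per payload) enlarged by a single vertex; a flip of the barrage's direction between two successive applications is harmless and merely changes which of the two branch sets is being enlarged.

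If~(i) and~(ii) are never reached, iterating this produces an $\omega$-chain of \iecon\ minors carrying the two distinguished branch sets, each growing by one vertex per step; feeding the chain to Lemma~\ref{limitMinor}---whose hypothesis, that the maps along the chain restrict to the identity on the portion already constructed, the construction must be set up to satisfy---produces a minor of $G[C+u+v]$ in the limit, and the entire point is to arrange that this limit is a genuine football minor connecting $u$ and $v$. I expect this to be the main obstacle: at each step one must commit not only the payload to descend into but also which payloads to freeze into the eventual core and how the growing branch sets attach to them, so that the frozen pieces together form an \iecon\ minor while both growing branch sets stay adjacent to infinitely many of them. One must also be ready to stop and output a $\TTT$ minor the moment a payload's finitely separating spanning tree (available by Theorem~\ref{finSepSTsExist}) contains a subdivision of the infinite binary tree, via Lemma~\ref{fcutTreeContainsBinaryGivesTTT}---for it is precisely recursive prunability (Proposition~\ref{RecPrunableBinary}), i.e.\ the absence of such a subdivision, that keeps forcing case~(iii), and it must be stopped from doing so indefinitely without payoff.
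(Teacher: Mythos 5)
Your reduction to Proposition~\ref{FindingAnArrowBarrage} is exactly where the paper starts, and your diagnosis that the whole content of the theorem is the upgrade of case~(iii) is right. But the construction you propose for the recurrent case~(iii) cannot terminate in a football, and the obstacle you flag at the end is not a technical one to be ``set up'' away --- it is fatal to this shape of argument. At each level you descend into a single payload and freeze the others; the frozen payloads of all levels are pairwise disjoint and have \emph{no edges between them}, since distinct muscles of a barrage meet only in the barrage's endvertices, and those endvertices are absorbed into the two growing branch sets $B_p$ and $B_q$. So in the limit, once $B_p$ and $B_q$ are deleted, the putative core falls apart into its individual payloads: it is disconnected, hence not \iecon, and no choice of ``which payloads to freeze'' or ``how the growing branch sets attach'' can repair this. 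Contracting everything, your limit object is again an arrow barrage (each frozen payload sends one edge to $p$ and infinitely many to $q$), i.e.\ you reproduce case~(iii) at the limit rather than escape it.

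The missing idea is that the payoff of perpetual case~(iii) is outcome~(i), not outcome~(ii), and that it comes from branching into \emph{every} muscle of \emph{every} barrage rather than descending along a single thread. The paper assumes both (i) and (ii) fail, so every application of Proposition~\ref{FindingAnArrowBarrage} --- the first one to $(G,u,v,C)$, and then one inside each muscle $M_{xy}^k$ of each barrage obtained so far, applied to the neighbours $x',y'$ of its endvertices --- returns a muscle barrage. Replacing one muscle of each new barrage by an edge and keeping the rest as ``foresight'' for the next round builds, via Lemma~\ref{limitMinor}, a minor $H$ consisting of two disjoint $\aleph_0$-regular trees $T_u$ and $T_v$ joined by a perfect matching between corresponding vertices; the infinite branching at each node is supplied precisely by the infinitely many muscles of each barrage. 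Since $H/T_u\cong\TTT$, this contradicts the failure of~(i). Your closing remark about recursive prunability and the infinite binary tree shows you sense that $\TTT$ must be the terminus of unbounded recursion, but in your scheme the recursion tree is a path, so no $\TTT$ (and no football) can emerge from it.
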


\begin{proof}
Assume for a contradiction that both (i) and (ii) fail.
We will use Proposition~\ref{FindingAnArrowBarrage} to find the following graph $H$ as a minor in $G':=G[C+u+v]$.
Let $T_u$ be an $\aleph_0$-regular tree with root~$r_u$, and let $T_v$ be a copy of $T_u$ that is disjoint from $T_u$.
We write $r_v$ for the root of $T_v$.
The graph $H$ is obtained from the disjoint union of the two trees $T_u$ and $T_v$ by adding the perfect matching between their vertex sets that joins every vertex of $T_u$ to its copy in~$T_v$.
For every number $n\in\N$ we write $H_n$ for the subgraph of $H$ that is induced by the first $n$ levels of $T_u$ together with the first $n$ levels of~$T_v$.
Thus, $H=\bigcup_{n\in\N}H_n$. 
Finding an $H$ minor in $G'$ completes the proof, because $H/T_u$ is isomorphic to~$\TTT$.

A \emph{foresighted} $H_n$ is a graph that is obtained from $H_n$ by adding for every edge $xy\in H_n$ that runs between the two $n$th levels of $T_u$ and $T_v$ a muscle barrage $B_{xy}$ having endvertices $x$ and $y$ such that $B_{xy}$ contains no vertices from $H_n$ other than $x$ and $y$, and all muscle barrages added are pairwise disjoint.

By Lemma~\ref{limitMinor} it suffices to find a sequence $G'\rminor\hat{H}_0\rminor \hat{H}_1\rminor\cdots$ of graphs $\hat{H}_n$ that are foresighted $H_n$ with maps $\varphi_n\colon \hat{H}_n\rminor\hat{H}_{n+1}$ that restrict to the identity on $H_n\subset\hat{H}_n$ in order to find an $H$ minor in $\hat{H}_0\minor G'$.
To get started, we apply Proposition~\ref{FindingAnArrowBarrage} to $G,u,v,C$ to obtain in $G'$ a muscle barrage minor connecting $u$ and $v$.
By turning one of the muscles into an edge we obtain~$\hat{H}_0\minor G'$.

At step $n>0$, consider the muscle barrages $B_{xy}$ that turn $H_n$ into~$\hat{H}_n$.
For every muscle $M_{xy}^k$ of each of these muscle barrages $B_{xy}=\bigcup_{k\in\N}M_{xy}^k$ we apply Proposition~\ref{FindingAnArrowBarrage} in $M:=M_{xy}^k-x-y$ to the neighbours $x'$ and $y'$ of $x$ and $y$ in $M_{xy}^k$ and some component of $M-x'-y'$ to which both $x'$ and $y'$ send some edges to find a muscle barrage minor connecting $x'$ and $y'$.
By turning one muscle of each new barrage into an edge, we find $\varphi_n\colon \hat{H}_n\rminor\hat{H}_{n+1}$.
\end{proof}

\section{Proof of the main result}\label{sec:HalfTwo}

\noindent In this section we employ the main result of the previous section (Theorem~\ref{PowerHorse}) to prove the main result of this paper (Theorem~\ref{mainresult}).

\begin{lemma}\label{ABgiveS}
If $A$ and $B$ are two infinite vertex sets in a graph $G$ that does not contain a subdivision of $K^{\aleph_0}$, then there are vertices $a\in A$ and $b\in B$ plus a finite vertex set $S\subset V(G)\setminus\{a,b\}$ such that $S$ separates $a$ and $b$ in $G-ab$.
\end{lemma}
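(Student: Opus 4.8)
The plan is to argue by contradiction. Suppose that for no choice of $a\in A$, $b\in B$ and finite $S\subseteq V(G)\setminus\{a,b\}$ does $S$ separate $a$ from $b$ in $G-ab$; equivalently, for all \emph{distinct} $a\in A$ and $b\in B$ and every finite $S\subseteq V(G)\setminus\{a,b\}$ there is an $a$--$b$ path in $(G-ab)-S$. I would derive from this a subdivision of $K^{\aleph_0}$ in $G$, contradicting the hypothesis on $G$.

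The first step turns the assumption into a supply of paths: for any two distinct vertices $a\in A$ and $b\in B$, the graph $G-ab$ (hence $G$) contains infinitely many internally disjoint $a$--$b$ paths, each of length at least $2$. This follows by a routine recursion. Given internally disjoint $a$--$b$ paths $P_1,\ldots,P_k$ in $G-ab$, the set $X$ of their inner vertices is a finite subset of $V(G)\setminus\{a,b\}$, so by assumption $(G-ab)-X$ still contains an $a$--$b$ path $P_{k+1}$; since no inner vertex of $P_{k+1}$ lies in $X$, the path $P_{k+1}$ is internally disjoint from every $P_i$, and it has length at least $2$ because $a$ and $b$ are non-adjacent in $G-ab$.

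The second and main step is a recursive construction of a subdivision of $K^{\aleph_0}$ in $G$ whose branch vertices lie in $A$ and whose subdivided edges are spliced together at vertices of $B$. I would build pairwise distinct branch vertices $a_1,a_2,\ldots\in A$ and, for all $1\le i<n$, an $a_n$--$a_i$ path $Q_{n,i}$ in $G$, keeping track of a finite vertex set $Z_n$ (with $Z_0:=\emptyset$) that contains $a_1,\ldots,a_n$ and all the paths $Q_{m,i}$ with $m\le n$. At stage $n$ choose $a_n\in A\setminus Z_{n-1}$; then, for $i=1,\ldots,n-1$ in turn, put $F:=Z_{n-1}\cup\{a_n\}\cup V(Q_{n,1}\cup\cdots\cup Q_{n,i-1})$, a finite set containing $a_1,\ldots,a_{n-1}$, choose $b\in B\setminus F$, and invoke the first step: among the infinitely many internally disjoint $a_n$--$b$ paths only finitely many can meet the finite set $F$ in an inner vertex, so there is one, $P$, with $\mathring{P}\cap F=\emptyset$; likewise there is an $a_i$--$b$ path $P'$ with $\mathring{P'}\cap\bigl(F\cup V(P)\bigr)=\emptyset$. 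Then $P$ and $P'$ meet only in $b$, so $Q_{n,i}:=P\cup P'$ is an $a_n$--$a_i$ path with $\mathring{Q}_{n,i}\cap F=\emptyset$; finally enlarge $Z_{n-1}$ by $\{a_n\}$ and all the $V(Q_{n,i})$ to obtain $Z_n$. The invariants $\mathring{Q}_{n,i}\cap Z_{n-1}=\emptyset$ and $\mathring{Q}_{n,i}\cap V(Q_{n,i'})=\emptyset$ for $i'<i$, together with $a_k\notin Z_{k-1}$, ensure that any two of the paths $Q_{n,i}$ meet only in common endvertices and that no $a_k$ is an inner vertex of any of them; hence the $a_k$ together with the $Q_{n,i}$ form a subdivision of $K^{\aleph_0}$ in $G$ --- the desired contradiction.

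The only delicate point is the bookkeeping in this construction: one must keep all the forbidden sets finite, so that at every substep infinitely many of the available internally disjoint paths survive, and then check that the resulting family of paths is genuinely independent with no branch vertex in any interior. The single conceptual idea, beyond aiming for a $K^{\aleph_0}$-subdivision at all, is to route each subdivided edge through its own, freshly chosen vertex of $B$ --- a common vertex of $B$ would not work, since all edges routed through it would then share it as an inner vertex.
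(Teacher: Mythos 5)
Your proof is correct and follows essentially the same route as the paper's: negate the statement to get, for every distinct pair $a\in A$, $b\in B$, infinitely many internally disjoint $a$--$b$ paths in $G-ab$, and then recursively assemble these into a subdivision of $K^{\aleph_0}$. The only cosmetic difference is that the paper builds a $TK_{\aleph_0,\aleph_0}$ on $A$ and $B$ and then invokes $TK^{\aleph_0}\subset TK_{\aleph_0,\aleph_0}$, whereas you splice the two halves of each subdivided edge together at a fresh vertex of $B$ directly, which amounts to inlining the proof of that containment.
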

\begin{proof}
The absence of such an $S$ for a pair $a\neq b$ means that, inductively, we can find infinitely many independent $a$--$b$ paths in $G$.
So if there is no $S$ for every pair $a\neq b$, then inductively we find a $TK_{\aleph_0,\aleph_0}$ in $G$, and $TK^{\aleph_0}\subset TK_{\aleph_0,\aleph_0}$ (contradiction).
\end{proof}

\begin{lemma}\label{WlogThereIsS}
Suppose that $G$ is a football with endvertices $u$ and $v$.
If~$G$ does not contain a subdivision of $K^{\aleph_0}$, then $G$ contains an \iecon\ graph $H$ as a minor with branch sets $V_h$ $(h\in H)$ such that $u$ and $v$ are contained in distinct branch sets $V_x$ and $V_y$, respectively, and there is a finite vertex set $S\subset V(H)\setminus\{x,y\}$ separating $x$ and $y$ in~$H$.
\end{lemma}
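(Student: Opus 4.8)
The plan is to produce, after deleting finitely many edges from $G$ and possibly passing to an induced subgraph, a finite vertex set $S$ that separates $u$ from $v$, and then to take $H$ to be (a mild minor of) that subgraph with $V_x=\{u\}$, $V_y=\{v\}$ and that $S$. Lemma~\ref{ABgiveS} supplies the raw separators, and the hypothesis that $G$ is a football — that $C:=G-u-v$ is \iecon — is what is used to keep the ambient graph infinitely edge-connected throughout.

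Concretely, I would first reduce to $uv\notin E(G)$: if $uv\in E(G)$, delete it; the result is still a football with endvertices $u,v$, still \iecon, and its minors are minors of $G$. Since $C$ is \iecon, the sets $A:=N_G(u)$ and $B:=N_G(v)$ are infinite subsets of $V(C)$, so applying Lemma~\ref{ABgiveS} to $A$ and $B$ \emph{inside} the $K^{\aleph_0}$-subdivision-free graph $C$ yields $a\in A$, $b\in B$ with $a\neq b$ and a finite set $S_0\subseteq V(C)\setminus\{a,b\}$ separating $a$ from $b$ in $C-ab$; in particular $u,v\notin S_0$, and after deleting $ab$ if necessary we may assume $S_0$ separates $a$ from $b$ in $C$ itself. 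If no component of $C-S_0$ is adjacent in $G$ to both $u$ and $v$, then $S_0$ already separates the ``$u$-part'' of $G$ from the ``$v$-part'', and one reads off $H$ by a careful — not wholesale — contraction that keeps the two parts infinitely edge-connected. In general, the components of $C-S_0$ adjacent in $G$ to both $u$ and $v$ are the obstruction: each such component $K$ spans a $K^{\aleph_0}$-subdivision-free subgraph $C[K]$, in which one reapplies Lemma~\ref{ABgiveS} to $N_G(u)\cap V(K)$ and $N_G(v)\cap V(K)$ — both infinite, because $K$ meets the rest of $C$ only through the finite set $S_0$ and $C$ is \iecon, so otherwise one of $u,v$ could not be adjacent to $K$ — and removes the resulting finite separator to cut $K$ down. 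Iterating this (there may be infinitely many such $K$, but each is handled by removing only a finite set) one expects to reach a situation where a finite vertex set separates $u$ from $v$.

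The hard part is not locating the separator but guaranteeing that the final graph $H$ is \iecon. Deleting or contracting \emph{finite} vertex sets is harmless (Lemma~\ref{finiteEdgeContractionOK} and its obvious variants), but contracting an \emph{infinite} connected set is dangerous: the many parallel edges it creates collapse to single edges in a simple graph, so infinite edge-connectivity can be destroyed — for instance, contracting all of $C$ to a single vertex turns $G$ into a path on three vertices. The surgery must therefore be arranged so that $u$, $v$ and the separator vertices all keep infinite degree and, more delicately, so that every pair of vertices of $H$ still admits infinitely many edge-disjoint paths; this is exactly where the \iecon-ness of $C$ and of the subgraphs $C[K]$ is used, by re-running the edge-disjoint-paths bookkeeping from the proof of Lemma~\ref{finiteEdgeContractionOK}. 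A further fiddly point is deciding precisely which vertices to delete inside each offending $K$ so that the two resulting ``sides'' of $K$ are cleanly attributable to $u$ and to $v$: Lemma~\ref{ABgiveS} only separates a \emph{pair of vertices}, not the two whole neighbourhoods, so some extra argument is needed to pass from the former to the latter.
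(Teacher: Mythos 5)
There is a genuine gap, and it is the one you flag yourself at the end: you are trying to make $V_x=\{u\}$ and $V_y=\{v\}$ and to find a finite vertex set separating $u$ from $v$ outright, but Lemma~\ref{ABgiveS} only separates a single pair $a,b$, and no iteration of it will upgrade this to a separation of the two whole neighbourhoods. Your proposed recursion does not converge: a component $K$ of $C-S_0$ adjacent to both $u$ and $v$ is not ``cut down'' by removing one finite pair-separator inside it (the new separator again leaves components of $K$ adjacent to both $u$ and $v$), and if there are infinitely many offending components the union of the finite sets you remove is infinite. Worse, the goal itself is unattainable in general: take $C$ to be any \iecon\ graph with no $TK^{\aleph_0}$ and join $u$ and $v$ to every vertex of $C$; this is a football, yet every component of $C-S$ for every finite $S$ is adjacent to both $u$ and $v$, so no finite vertex set separates $u$ from $v$. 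This is precisely why the lemma is phrased in terms of branch sets $V_x\ni u$ and $V_y\ni v$ rather than $u$ and $v$ themselves.

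The missing idea is a short one. Apply Lemma~\ref{ABgiveS} in $C$ to $N(u)$ and $N(v)$ exactly as you do, obtaining $a\in N(u)$, $b\in N(v)$ and a finite $S$ separating $a$ from $b$ in $C-ab$. Now in $G-ab$ \emph{discard every edge incident with $u$ or $v$ except $ua$ and $vb$, and contract those two edges}. The resulting $H$ is isomorphic to $C-ab$, hence \iecon\ with no bookkeeping at all (deleting one edge from an \iecon\ graph is harmless; there is no contraction of infinite sets and no appeal to Lemma~\ref{finiteEdgeContractionOK}). Its branch sets are singletons except for $V_x=\{u,a\}$ and $V_y=\{v,b\}$, and since the only $H$-edges at $x$ and $y$ are those of $a$ and $b$ in $C-ab$, the set $S$ separates $x$ from $y$ in $H$. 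So the ``extra argument'' you were looking for is not to strengthen the separator but to weaken the vertices: prune $u$ and $v$ down to single attachment edges and absorb them into $a$ and $b$.
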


\begin{proof}
Write $C$ for the \iecon\ graph $G-u-v$.
We apply Lemma~\ref{ABgiveS} in $C$ to the infinite neighbourhoods $N(u)$ and $N(v)$ of $u$ and $v$ in $G$ to obtain vertices $a\in N(u)$ and $b\in N(v)$ plus a finite vertex set $S\subset V(C)\setminus\{a,b\}$ that separates $a$ and $b$ in $C-ab$.
Then $H$ can be obtained from the \iecon\ graph $G-ab$ as follows.
We discard all the edges that are incident with $u$ or $v$, except for the two edges $ua$ and $vb$ each of which we contract.
Then $H$ is \iecon\ because it is isomorphic to the \iecon\ graph $C-ab$.
And the way we treated the edges at $u$ and $v$ ensures that $S$ separates the two vertices $\{u,a\}$ and $\{v,b\}$ in $H$ as desired.
\end{proof}

\begin{lemma}\label{HuAndHv}
Suppose that $G$ is an \iecon\ graph and that $u,v$ are two distinct vertices of $G$ that are separated in $G$ by some finite vertex set $S\subset V(G)\setminus\{u,v\}$.
Then there exist induced subgraphs $H_u,H_v\subset G$ containing $u$ and $v$ respectively, 
such that the following assertions hold:
\begin{enumerate}
    \item $X:=V(H_u)\cap V(H_v)$ is finite, non-empty and connected in $G$;
    \item both $H_u/X$ and $H_v/X$ are \iecon ;
    \item $X$ avoids $u$ and $v$;
    \item $uX$ is an edge of $H_u/X$ and $vX$ is an edge of $H_v/X$.
\end{enumerate}
\end{lemma}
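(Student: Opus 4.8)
The plan is to produce $X$ as a finite connected ``hub'' that touches both $u$ and $v$, and then to take $H_u$ and $H_v$ to be the two sides of $G$ obtained by splitting along $X$, overlapping only in $X$.

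First I would replace $S$ by a minimal $u$--$v$ separator contained in it; this is still disjoint from $\{u,v\}$ and still separates. Write $C_u$ and $C_v$ for the components of $G-S$ containing $u$ and $v$. Minimality gives that every $s\in S$ has a neighbour in $C_u$ and a neighbour in $C_v$. Since $G$ is \iecon\ and $S$ is finite, I record: $u$ has infinitely many neighbours in $C_u$, $v$ has infinitely many in $C_v$, the edge sets $E(C_u,S)$ and $E(C_v,S)$ are infinite (every $u$--$v$ path uses a $C_u$--$S$ edge and a $C_v$--$S$ edge, and there are infinitely many edge-disjoint $u$--$v$ paths), and neither $u$ nor $v$ is a cut vertex of $G$ (a cut vertex of an \iecon\ graph is, by an easy projection argument, finitely inseparable from every other vertex, which would contradict that $S$ separates $u$ from $v$).

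Next I would build $X$: a finite set, disjoint from $\{u,v\}$, connected in $G$, containing a neighbour of $u$ chosen inside $C_u$ and a neighbour of $v$ chosen inside $C_v$, and meeting $S$ in just enough vertices that, once $X$ is contracted, the two sides below become \iecon\ — roughly, $X$ must absorb those separator vertices through which $G$ reroutes around the finite edge-cuts of $G[C_u]$ and of $G[C_v]$. Such an $X$ can be wired together using paths inside $C_u$ (which automatically avoid $v$), paths inside $C_v$ (which automatically avoid $u$), and detours through the remaining components of $G-S$ (which avoid both); here the minimality of $S$ and the non-cut-vertex property are what make the wiring possible. I then set
\[
H_u:=G[\,C_u\cup X\,],\qquad H_v:=G\bigl[\,(V(G)\setminus C_u)\cup(X\cap C_u)\,\bigr].
\]
A direct set computation gives $V(H_u)\cap V(H_v)=X$, so (i) and (iii) hold by the construction of $X$, and (iv) holds because $u,v\notin X$ while $X$ contains a neighbour of each of $u$ and $v$.

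For (ii) I would use that contracting a connected vertex set cannot decrease edge-connectivity between the surviving vertices: once $X$ is large enough, the contracted vertex $\xi$ becomes a cut vertex of $G/X$ with $C_u\setminus X$ on one side and $(V(G)\setminus C_u)\setminus X$ on the other, so it suffices to take infinitely many edge-disjoint paths of $G$ between two prescribed vertices of $H_u/X$ (resp.\ $H_v/X$), project them in, and shortcut them at $\xi$. The main obstacle is Step~2: one has to bundle a neighbour of $u$, a neighbour of $v$ and just enough of $S$ into a single finite connected set avoiding $u$ and $v$, balancing the need to absorb separator vertices (so that the contracted sides are \iecon) against the fact that $S$ itself need not be connectable away from $u$ and $v$ — so $X$ will in general neither contain all of $S$ nor separate $u$ from $v$.
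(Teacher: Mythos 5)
There is a genuine gap: the construction of $X$ and the verification of (ii) --- which together are the entire content of the lemma --- are described only as a goal (``$X$ must absorb those separator vertices through which $G$ reroutes around the finite edge-cuts\dots''), not carried out. You even flag this yourself as ``the main obstacle''. Nothing in the proposal shows that a finite connected $X$ with the required absorbing property exists, and your choice $H_u:=G[C_u\cup X]$ makes this genuinely hard: $G[C_u\cup S]$ need not be \iecon\ at all --- it may split into several classes of the finite-separability relation $\sim$, with only finitely many edges between any two classes --- so after contracting a finite $X$ one still has to rule out finite edge cuts inside $C_u$ itself, and the proposal offers no argument for this. The supporting ``projection'' argument for (ii) also does not go through as stated: infinitely many edge-disjoint $a$--$b$ paths in $G$ may leave $C_u$ through $S\setminus X$ before meeting $X$, so their truncations need not lie in $H_u/X$. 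Finally, the claim that $u$ and $v$ cannot be cut vertices of $G$ is unjustified (an \iecon\ graph can have cut vertices of infinite degree; the stated ``contradiction'' conflates vertex separators with edge cuts), and this claim is invoked in the wiring step.

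For comparison, the paper avoids the problem you ran into by \emph{not} taking $H_u$ and $H_v$ to be the two full sides of $G$. It considers the finite-separability relation $\sim_w$ on $G[C_w\cup S]$ for $w\in\{u,v\}$, notes that every class meets the finite set $S$ (so there are finitely many classes, each non-singleton class inducing an \iecon\ subgraph), and sets $K_w:=G[\,[w]_{\sim_w}\,]$. The subgraphs $H_u,H_v$ are then just $K_u$ and $K_v$ together with a short connecting path system, $X$ is that path system, and (ii) follows from the easy fact (Lemma~\ref{finiteEdgeContractionOK}) that contracting finite vertex sets preserves infinite edge-connectivity. If you want to salvage your approach, you would need an analogue of this localisation; as written, the lemma's key step remains unproved.
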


\begin{proof}
Given $G,u,v,S$ let us write $C_u$ and $C_v$ for the distinct components of \mbox{$G-S$} that contain $u$ and $v$ respectively.
For both $w\in\{u,v\}$ we abbreviate $\sim_{G[C_w\cup S]}$ as $\sim_w$.
As $G$ is \iecon , we infer that every $\sim_w$-class meets $S$.
In particular, there are only finitely many $\sim_w$-classes in total, which means that each of the non-singleton classes induces an \iecon\ subgraph of $G$.
Let us write $K_u$ and $K_v$ for the \iecon\ subgraphs induced by the classes containing $u$ and $v$ respectively, i.e., $K_u:=G[\,[u]_{\sim_u}\,]$ and $K_v:=G[\,[v]_{\sim_v}\,]$.
To find $H_u$ and $H_v$, we distinguish two cases.

In the first case, $K_u$ and $K_v$ are disjoint.
For both $w\in\{u,v\}$, the finite partition of $V(C_w)\cup S$ induced by $\sim_w$ has only finitely many cross-edges. 
Since $G$ is \iecon , this means that we can find a $(K_u\cap S)$--$(K_v\cap S)$ path $P$ in $G$ avoiding all these finitely many edges.
Then $P$, as it may not use these edges, is a $K_u$--$K_v$ path with endvertices in $S$.
We let $P_w$ be a $w$--$P$ path in $K_w$ for both $w\in\{u,v\}$.
Letting $H_u:=G[K_u\cup P\cup\mathring{v}P_v]$ and $H_v:=G[K_v\cup P\cup\mathring{u}P_u]$ completes this case with $X=V(P_u\cup P\cup P_v)\setminus\{u,v\}$ because the graph $H_w/X$ contains the spanning subgraph $K_w/V(\mathring{w}P_w)$, and $K_w/V(\mathring{w}P_w)$ is \iecon\ by Lemma~\ref{finiteEdgeContractionOK} and because $K_w$ is \iecon .

In the second case, $K_u$ and $K_v$ meet in a vertex $s\in S$.
We write $D_u$ for the component of $K_u-u$ containing $s$.
In $D_u$ we pick a finite tree $T$ that contains the finite intersection $V(D_u)\cap V(K_v)\subset S$ and contains a neighbour of~$u$.
Then $T$ contains $s$ but neither $u$ nor $v$.
We let $P_v$ be any $v$--$s$ path in $K_v$.
Letting $H_u:=G[D_u\cup \mathring{v}P_v+u]$ and $H_v:=G[K_v\cup T]$ completes this case with~$X=V(T\cup \mathring{v}P_v)$:
On the one hand, the graph $H_u/X$ is \iecon\ because it contains the spanning subgraph $G[D_u+u]/V(T)$ which is \iecon\ by Lemma~\ref{finiteEdgeContractionOK} and the fact that $G[D_u+u]$ is an \iecon\ subgraph of~$K_u$.
On the other hand, the graph $H_v/X$ contains the spanning subgraph $K_v/Y$ for $Y:=(V(K_v)\cap V(D_u))\cup V(\mathring{v}P_v)$, and $K_v/Y$ is \iecon\ by Lemma~\ref{finiteEdgeContractionOK} and because $K_v$ is \iecon .
\end{proof}

\begin{definition}[Plows]
Suppose that $u$ and $v$ are two distinct vertices.
A \emph{half-plow} with \emph{endvertices} $u$ and $v$ is an \iecon\ graph containing the edge $uv$.
A \emph{plow} with \emph{endvertices} $u$ and $v$ and \emph{head} $h$ is a union of two half-plows with end-vertices $u,h$ and $h,v$ that do not meet in any vertex other than~$h$.
Plow minors \emph{connecting} some two vertices are defined like for footballs and muscles.
\end{definition}

\begin{theorem}\label{TTTorPlow}
If $G$ is an \iecon\ graph and $u,v$ are two distinct vertices of $G$, then at least one of the following two assertions holds:
\begin{enumerate}
\item $G$ contains a $\TTT$ minor;
\item $G$ contains a plow minor connecting $u$ and $v$.
\end{enumerate}
\end{theorem}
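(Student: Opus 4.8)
The plan is to start from an arbitrary pair $u,v$ of distinct vertices in an \iecon\ graph $G$ and reduce to a situation where the earlier machinery applies. First I would dispose of the trivial case: if $G$ contains a subdivision of $K^{\aleph_0}$, then in particular $G$ contains $K^{\aleph_0}$ as a minor, and since $\TTT$ is a minor of $K^{\aleph_0}$ (it is countable and $K^{\aleph_0}$ contains every countable graph as a minor via the star-comb lemma applied inside branch sets, or simply because $K^{\aleph_0}$ is \iecon\ and one can route the required adjacencies), assertion~(i) holds. So from now on I may assume $G$ contains no $TK^{\aleph_0}$, which is exactly the hypothesis needed to invoke Lemmas~\ref{ABgiveS}, \ref{WlogThereIsS} and, ultimately, \ref{HuAndHv}.

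Next I would apply Theorem~\ref{PowerHorse}. Since $G$ is \iecon\ it is connected, so there is a $u$--$v$ path and hence, after possibly re-choosing the component, a component $C$ of $G-u-v$ to which both $u$ and $v$ send an edge (if no such component exists one has to be slightly careful, but the \iecon\ condition forces $u$ and $v$ each to have infinite degree, and one can absorb the neighbourhoods along the $u$--$v$ path into branch sets — this is the kind of routine reduction I would spell out). Theorem~\ref{PowerHorse} then gives either a $\TTT$ minor in $G[C+u+v]\subset G$, which yields~(i) and we are done, or a football minor in $G[C+u+v]$ connecting $u$ and~$v$: that is, an \iecon\ graph $F$ arising as a minor of $G$ with branch sets $V_h$, with $u$ and $v$ lying in distinct branch sets $V_x,V_y$, and $F-x-y$ again \iecon .

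Now I would feed this football $F$ into Lemma~\ref{WlogThereIsS}. Since $G$ has no $TK^{\aleph_0}$ and $F$ is a minor of $G$, the football $F$ has no $TK^{\aleph_0}$ either (topological minors are closed downwards under the minor relation in the relevant direction — more carefully, a minor of a graph with no $TK^{\aleph_0}$ still has no $TK^{\aleph_0}$ since $K^{\aleph_0}$ has maximum degree $\aleph_0$... actually one should be slightly careful here; what one really uses is that a $TK^{\aleph_0}$ in a minor lifts to a $TK^{\aleph_0}$ in the host, which holds because the branch sets are connected). Lemma~\ref{WlogThereIsS} then produces an \iecon\ graph $H$, a minor of $F$ and hence of $G$, with branch sets placing $u\in V_{x'}$ and $v\in V_{y'}$ in distinct branch sets, together with a finite set $S\subset V(H)\setminus\{x',y'\}$ separating $x'$ and $y'$ in $H$. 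Finally I would apply Lemma~\ref{HuAndHv} to $H$, $x'$, $y'$, $S$, obtaining induced subgraphs $H_{x'},H_{y'}\subset H$ with the four listed properties; since $x'X$ is an edge of $H_{x'}/X$ with $H_{x'}/X$ \iecon, the graph $H_{x'}/X$ is a half-plow with endvertices $x'$ and $X$, and likewise $H_{y'}/X$ is a half-plow with endvertices $X$ and $y'$, and these two half-plows meet only in~$X$ by property~(i) and~(iii). Their union is thus a plow with head $X$, realised as a minor of $H$ and hence of $G$, and pulling the branch sets all the way back to $G$ places $u$ in the branch set of $x'$ and $v$ in that of $y'$, giving a plow minor connecting $u$ and~$v$, which is assertion~(ii).

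\textbf{Main obstacle.} The conceptual content is entirely in the already-proved Theorem~\ref{PowerHorse} and Lemmas~\ref{ABgiveS}--\ref{HuAndHv}; the work left is bookkeeping with minors. The step I expect to need the most care is the very first reduction to a component $C$ to which both $u$ and $v$ send edges, and the chain of ``$G$ has no $TK^{\aleph_0}$ $\Rightarrow$ the football/minor has no $TK^{\aleph_0}$'' implications, since these rely on the precise way topological minors interact with branch sets and with contraction of finite sets — here I would want to state explicitly that a $TK^{\aleph_0}$ in a minor $M$ of $G$ lifts to a $TK^{\aleph_0}$ in $G$ (replace each branch vertex by a vertex of its branch set and each subdivided edge by a path through the branch sets and the connecting edge), which is what licenses carrying the $TK^{\aleph_0}$-freeness hypothesis down the chain. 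The rest — recognising $H_{x'}/X$ and $H_{y'}/X$ as half-plows and their union as a plow, and composing the minor-maps to land back in $G$ with $u,v$ in the right branch sets — is routine.
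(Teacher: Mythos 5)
Your proposal follows the paper's own route exactly: Theorem~\ref{PowerHorse} to obtain a football minor (or a $\TTT$ minor outright), Lemma~\ref{WlogThereIsS} to convert that football into an \iecon\ minor in which the images of $u$ and $v$ are separated by a finite set avoiding them, and Lemma~\ref{HuAndHv} to produce the two half-plows whose union, after contracting $X$, is the desired plow connecting $u$ and $v$. So the approach and all the key steps are the paper's.

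One justification you give is, however, incorrect as stated: a $TK^{\aleph_0}$ in a minor $M$ of $G$ does \emph{not} in general lift to a $TK^{\aleph_0}$ in $G$, because a branch vertex of infinite degree in $M$ corresponds to a connected branch set in $G$ whose infinitely many outgoing paths need not emanate from a single vertex; all one gets is a $K^{\aleph_0}$ \emph{minor} of $G$. The patch is the reduction you already made at the very start: if the football (or any later intermediate minor) contains a $TK^{\aleph_0}$, then $G$ has a $K^{\aleph_0}$ minor and hence a $\TTT$ minor, since $\TTT$ is countable and therefore a subgraph of $K^{\aleph_0}$; so before invoking Lemma~\ref{WlogThereIsS} one may assume the football is $TK^{\aleph_0}$-free. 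Also, your worry about finding a component of $G-u-v$ to which both $u$ and $v$ send edges resolves more cleanly than by ``absorbing neighbourhoods along a path'': if every component received edges from only one of $u,v$, then $u$ together with the components attached only to $u$ would be separated from the rest of $G$ by at most the single edge $uv$, contradicting infinite edge-connectivity.
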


\begin{proof}
Let $G,u,v$ be given, we show $\neg$(i)$\to$(ii).
For this, suppose that $G$ does not contain a $\TTT$ minor.
By Theorem~\ref{PowerHorse} and Lemma~\ref{WlogThereIsS} we may assume that there is a finite vertex set $S\subset V(G)\setminus\{u,v\}$ that separates $u$ and $v$ in~$G$.
Then applying Lemma~\ref{HuAndHv} provides induced subgraphs $H_u,H_v\subset G$ containing $u$ and $v$ respectively, such that the following assertions hold:
\begin{itemize}[label={\textbf{--}}]
    \item $X:=V(H_u)\cap V(H_v)$ is finite, non-empty and connected in $G$;
    \item both $H_u/X$ and $H_v/X$ are \iecon ;
    \item $X$ avoids $u$ and $v$;
    \item $uX$ is an edge of $H_u/X$ and $vX$ is an edge of $H_v/X$.
\end{itemize}
Then $(H_u\cup H_v)/X$ is a plow minor connecting $u$ and~$v$.
\end{proof}

\begin{customthm}{\ref{mainresult}}
Every \iecon\ graph contains either the Farey graph or $\TTT$ as a minor.
\end{customthm}

\begin{proof}
If $G$ contains $\TTT$ as a minor, then we are done.
So let us suppose that $G$ does not contain a $\TTT$ minor.
Our task then is to find a Farey graph minor in~$G$.
By Lemma~\ref{FareyGisMinorOfHalvedFareyG} it suffices to find a halved Farey graph minor.

Call a graph a \emph{foresighted} halved Farey graph of \emph{order} $n\in\N$ if it is the union of $\hFG_n$ with \iecon\ graphs $A_{xy}$, one for every blue edge $xy\in \hFG_n$, such that:
\begin{enumerate}
    \item each $A_{xy}$ meets $\hFG_n$ precisely in $x$ and $y$ but $xy\notin A_{xy}$;
    \item every two distinct $A_e$ and $A_{e'}$ meet precisely in the intersection $e\cap e'$ of their corresponding edges (viewed as vertex sets).
\end{enumerate}

\noindent By Lemma~\ref{limitMinor} it suffices to find a sequence $H_0,H_1,\ldots$ of foresighted halved Farey graphs of orders $0,1,\ldots$ with maps $\varphi_n\colon H_n\rminor H_{n+1}$ that restrict to the identity on $\hFG_n\subset H_n$ to yield a halved Farey graph minor in~$G=:H_0$.

To get started, pick any edge $e$ of $G$, and note that $G=H_0$ is a foresighted halved Farey graph of order $0$ when we rename $e$ to the edge of which $\hFG_0=K^2$ consists.
At step $n+1$, suppose that we have already constructed $H_n\supset\hFG_n$, and consider the \iecon\ graphs $A_{xy}$ that were added to $\hFG_n$ to form~$H_n$.
Theorem~\ref{TTTorPlow} yields in each $A_{xy}$ a plow minor with head $h_{xy}$ that connects $x$ and $y$.
These plow-minors combine with $\hFG_n$ and with each other to give a map~$\varphi_n\colon H_n\rminor H_{n+1}\supset\hFG_{n+1}$ that sends the branch set of every head $h_{xy}$ to the vertex $v_{xy}\in\hFG_{n+1}-\hFG_n$ that arises from the blue edge $xy\in\hFG_n$ in the recursive definition of~$\hFG_{n+1}$.
\end{proof}

\section{Outlook}\label{sec:Outlook}

\noindent Here are two open problems that came to my mind.

\begin{problem}
Can Theorem~\ref{mainresult} be strengthened to always find one of the two minors with finite branch sets?
\end{problem}

Seymour and Thomas~\cite{ExcludingInfiniteTrees}, together with Robertson~\cites{ExcludingInfiniteMinors,ExcludingInfiniteCliques}, have characterised the graphs without $K^\kappa$ or $T_\kappa$ minors in terms of tree-decompositions and, alternatively, in terms of various other structures.
Can their list be extended to include the Farey graph?
Tree-decompositions might not be the right complementary structures for \iecon\ substructures, but there might be other structures (e.g.~$\fcut(G)$-trees):

\begin{problem}
Characterise the graphs without a Farey graph minor in terms of tree-decompositions or in terms of other structures.
\end{problem}

\section{Appendix}\label{sec:appendix}

\noindent The following lemma proves that Theorem~6.3 in~\cite{duality} is equivalent to Theorem~\ref{finSepSTsExist}.
The lemma and its proof are formulated in the terminology of~\cite{duality}.
In particular, $\tilde{G}$ denotes the topological space considered in~\cite{duality}, not the quotient graph that we considered in the previous sections.

\begin{lemma}\label{lem:theEquivalence}
Let $G$ be any finitely separable connected graph.
Then the following assertions are equivalent:
\begin{enumerate}
    \item $G$ has a spanning tree whose closure in $\tilde{G}$ contains no circle;
    \item $G$ has a finitely separating spanning tree.
\end{enumerate}
\end{lemma}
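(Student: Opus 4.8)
The plan is to prove the two implications separately; (ii)$\Rightarrow$(i) is routine, whereas (i)$\Rightarrow$(ii) is the substantial one (a spanning tree with circle-free closure need not itself be finitely separating). Throughout, for a spanning tree $T$ of $G$ and an edge $e\in T$ I write $C_e$ for the fundamental cut of $e$, i.e.\ the set of edges of $G$ joining the two components of $T-e$, so that $T$ is finitely separating exactly when every $C_e$ is finite. I use two standard facts: an end $\omega$ lies in the closure in $\tilde G$ of a connected subgraph $H$ if and only if $H$ contains a ray belonging to $\omega$; and two rays of $G$ belong to the same end if and only if $G$ contains infinitely many pairwise disjoint paths between them.

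For (ii)$\Rightarrow$(i) I would show that a finitely separating spanning tree $T$ has circle-free closure $\overline T$, so that $T$ itself witnesses (i). Suppose $O\subseteq\overline T$ were a circle. Since $T$ is a tree it contains no circle, and the interiors of the edges of $G$ outside $T$ avoid $\overline T$; hence $O$ uses only edges of $T$ and must pass through an end of $G$. Using the first fact together with the standard structure of circles in $\tilde G$, I extract two disjoint rays $R,R'\subseteq T$ belonging to a common end. By the second fact there are infinitely many pairwise disjoint $R$--$R'$ paths in $G$; in particular their first vertices on $R$ are all distinct. Let $P$ be the $T$-path from $R$ to $R'$ that is internally disjoint from $R\cup R'$, with endpoint $r$ on $R$, and pick an edge $e$ of $R$ lying beyond $r$. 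Then in $T-e$ the tail of $R$ past $e$ lies in a different component from $R'$, because the $T$-path from that tail to $R'$ traverses $e$. Hence each of the infinitely many $R$--$R'$ paths whose first vertex on $R$ lies past $e$ crosses $C_e$, and since these paths are edge-disjoint, $C_e$ is infinite --- contradicting that $T$ is finitely separating.

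For (i)$\Rightarrow$(ii) I would start from a spanning tree $T$ with $\overline T$ circle-free and rebuild it into a finitely separating one. The structural input is the contrapositive of the mechanism above: if $e=xy\in T$ has $C_e$ infinite, then the edges of $C_e$ cannot be captured by rays on \emph{both} sides of $T-e$, since two such rays would belong to a common end (apply the second fact along the crossing edges) and, joined through the $T$-path across $e$, would create a circle in $\overline T$. Applying the star-comb lemma inside each of the two subtrees of $T-e$ to the endpoints of the crossing edges rules out the case of a comb on each side, so on at least one side the crossing edges emanate, through pairwise internally disjoint $T$-paths, from a single vertex $v$. I would then reroute $T$ near $v$, re-attaching the opposite side to $v$ through crossing edges and deleting just enough edges to remain a tree, so that afterwards every tree-edge at $v$ separates off only a finitely-separable piece and the fundamental cuts near $v$ become finite. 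Performing this surgery for all bad edges along a suitable well-ordering and passing to the limit --- with the bookkeeping devices (recursive pruning, ascending unions of trees) already used in this paper --- produces a finitely separating spanning tree of $G$.

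I expect the main obstacle to be precisely this last step: organising the rerouting so that the process converges and so that infinite fundamental cuts are destroyed without new infinite ones appearing. The identification of the controlling vertex $v$ via the star-comb lemma, and the correspondence ``infinite fundamental cut $\longleftrightarrow$ long-range configuration in $\tilde G$'', are the standard tools that should make this work.
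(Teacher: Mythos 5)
Your direction (ii)$\Rightarrow$(i) is essentially workable, but note one omission: a circle in $\overline T$ need not yield \emph{two} disjoint rays of $T$ in a common end. In the space $\tilde G$ of~\cite{duality} a dominated end is identified with its dominating vertices, so for a single ray $R\subset T$ that starts at a vertex dominating it, $\overline R$ is already a circle. That case succumbs to the same counting argument (the dominating paths landing beyond an edge $e$ of $R$ all cross $C_e$), so it is patchable; the paper instead dispatches all cases at once via the jumping-arc lemma, using that a finite cut induces a clopen bipartition separating the endpoints of the arc $C-\mathring e$.

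The genuine gap is in (i)$\Rightarrow$(ii). Your premise that the tree must be ``rebuilt'' is mistaken, and the surgery-plus-transfinite-limit construction you defer to the end --- which you yourself flag as the main obstacle and never carry out (no argument that the rerouting terminates, or that it creates no new infinite fundamental cuts) --- is where the proof is missing. The point is that the circle-free spanning tree $T$ is \emph{already} finitely separating. You correctly rule out combs on both sides of $T-e$ (two disjoint equivalent rays of $T$ span a double ray whose closure is a circle), but you then treat the remaining ``star'' case --- infinitely many edges of $C_e$ meeting a single vertex $v$ on one side --- as something to be repaired by surgery. It is in fact also contradictory: apply the star-comb lemma in the other component $T_2$ of $T-e$ to the far endvertices of those edges. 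A star there would give infinitely many independent paths between two fixed vertices, contradicting that $G$ is finitely separable; a comb gives a ray of $T$ dominated by $v$, whose closure (extended to start at $v$) is a circle in $\overline T$ by the identification above, contradicting (i). This is exactly the paper's argument: first establish that no ray of $T$ is dominated and no two disjoint rays of $T$ are equivalent, then use the star-comb lemma on both sides of $T-e$ to exclude every configuration an infinite fundamental cut could produce. No rerouting or limit construction is needed.
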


\begin{proof}
(ii)$\rightarrow$(i)
Every finite cut $F=E(V_1,V_2)$ of $G$ gives rise to a clopen bipartition $\overline{G[V_1]}\oplus\overline{G[V_2]}$ of the space~$\tilde{G}-\mathring{F}$, just like in the jumping arc lemma~\cite{DiestelBook5}*{8.6.3}.
Now suppose for a contradiction that $T\subset G$ is a finitely separating spanning tree and that $C\subset\overline{T}$ is a circle.
Then $C$ contains an edge $e\in T$ as Bruhn and Diestel remark in~\cite{duality}*{§2}.
But the fundamental cut $F_e$ of $e$ with respect to $T$ is finite, and hence its induced clopen bipartition topologically separates the endpoints of the arc $C-\mathring{e}$ in $\tilde{G}-\mathring{F}$, a contradiction.

(i)$\rightarrow$(ii)
Given any spanning tree $T\subset G$ whose closure in $\tilde{G}$ contains no circle, let us assume for a contradiction that some fundamental cut $F_e$ of $T$ is infinite.

We claim that
\begin{enumerate}[label=(\arabic*)]
    \item no ray in $T$ is dominated in~$G$, and that
    \item no two disjoint rays in $T$ are equivalent in~$G$.
\end{enumerate}
Indeed, if $T$ contains a ray that is dominated in $G$ by a vertex~$v$, then that ray is a tail of ray $R\subset T$ that starts in~$v$, so $\overline{R}\subset\overline{T}$ is a circle contradicting the choice of~$T$.
And if $T$ contains two disjoint equivalent rays, then there is a double ray $D\subset T$ that contains both rays, and neither of the two rays is dominated by~(1).
Thus, $\overline{D}\subset\overline{T}$ is a circle contradicting the choice of~$T$.

To complete the proof, we consider the two components $T_1$ and $T_2$ of $T-e$.

If some infinitely many edges in $F_e$ meet in the same vertex $v$ with $v\in T_1$, say, then applying the star-comb lemma in $T_2$ to their other endvertices must yield a comb since $G$ is finitely separable.
But then the spine of that comb is dominated by~$v$, contradicting~(1).

Otherwise we find an infinite independent edge set~$M\subset F_e$.
Applying the star-comb lemma in~$T_1$ to the endvertices of the edges in~$M$ yields either a star or a comb, and by replacing $M$ with an infinite subset we may assume without loss of generality that every edge in~$M$ has an endvertex that is either a leaf of the star or a tooth of the comb.
But now applying the star-comb lemma in $T_2$ to the endvertices of the edges in $M$ yields a contradiction, as follows.
On the one hand we cannot get a star, because this would contradict either that $G$ is finitely separable or~(1).
On the other hand we cannot get a comb, because this would contradict either (1) or~(2).
\end{proof}

\begin{bibdiv}
\begin{biblist}

\bib{duality}{article}{
	Author = {H. Bruhn and R. Diestel},
	Journal = {Comb.,\ Probab. \& Comput.},
	Pages = {75--90},
	Title = {Duality in infinite graphs},
	Volume = {15},
	Year = {2006},
	review={\MR{MR2195576}},
	doi={10.1017/S0963548305007261},
}

\bib{StarComb1StarsAndCombs}{article}{
    title={{Duality theorems for stars and combs I: Arbitrary stars and combs}},
    author={C.~Bürger and J.~Kurkofka},
    year={2020},
    note={Submitted},
    eprint={2004.00594},
}

\bib{StarComb2TheDominatedComb}{article}{
    title={{Duality theorems for stars and combs II: Dominating stars and dominated combs}},
    author={C.~Bürger and J.~Kurkofka},
    year={2020},
    note={Submitted},
    eprint={2004.00593}
}

\bib{StarComb3TheUndominatedComb}{article}{
    title={{Duality theorems for stars and combs III: Undominated combs}},
    author={C.~Bürger and J.~Kurkofka},
    year={2020},
    note={Submitted},
    eprint={2004.00592}
}

\bib{StarComb4TheUndominatingStar}{article}{
    title={{Duality theorems for stars and combs IV: Undominating stars}},
    author={C.~Bürger and J.~Kurkofka},
    year={2020},
    note={Submitted},
    eprint={2004.00591}
}

\bib{OfficeHoursGroupTheory}{book}{
	author = {M.~Clay and D.~Margalit},
	title = {{Office Hours with a Geometric Group Theorist}},
	year = {2017},
	publisher = {Princeton University Press},
	review={\MR{MR3645425}},
	doi={10.23943/princeton/9780691158662.001.0001},
}

\bib{DiestelBook5}{book}{
	author = {R.~Diestel},
	edition = {5th},
	publisher = {Springer},
	title = {{Graph Theory}},
	year = {2016},
	doi = {10.1007/978-3-662-53622-3}
}

\bib{AbstractSepSys}{article}{
    author = {R.~Diestel},
    title = {{Abstract Separation Systems}},
	journal = {Order},
    year = {2018},
	volume = {35},
	number = {1},
	pages = {157--170},
	eprint={1406.3797v6},
	doi={10.1007/s11083-017-9424-5},
	review={\MR{MR3774512}}
}

\bib{RhdTreeSets}{article}{
    author = {R.~Diestel},
    title = {{Tree Sets}},
	journal = {Order},
	eprint={arXiv:1512.03781v3},
	doi={10.1007/s11083-017-9425-4},
    year = {2018},
	volume = {35},
	number = {1},
	pages = {171--192}
}

\bib{GenGridTheorem}{article}{
	author = {J.~Geelen and B.~Joeris},
    title = {{A generalization of the Grid Theorem}},
    note = {Submitted},
    eprint = {1609.09098},
    year = {2016}
}

\bib{GollinHeuerKcon}{article}{
    author = {J.P.~Gollin and K.~Heuer},
    title = {Characterising $k$-connected sets in infinite graphs},
    note = {Submitted},
    eprint = {1811.06411},
    year = {2018}
}

\bib{halin78}{incollection}{
	Author = {R.~Halin},
	Booktitle = {Advances in Graph Theory, Annals of Discrete Mathematics},
	Editor = {B.Bollob\'as},
	Publisher = {North-Holland},
	Title = {Simplicial decompositions of infinite graphs},
	Volume = {3},
	Year = {1978},
	review={\MR{MR0499113}},
	doi={10.1016/S0167-5060(08)70500-4}
}

\bib{hatcher2017topology}{article}{
  title={Topology of numbers},
  author={A.~Hatcher},
  journal={Book in preparation},
  year={2017},
  note={Available \href{https://pi.math.cornell.edu/~hatcher/TN/TNbook.pdf}{online}}
}

\bib{JoerisPhD}{thesis}{
	author={B.~Joeris},
	title={Connectivity, tree-decompositions and unavoidable-minors},
	school={University of Waterloo},
	year={\href{http://hdl.handle.net/10012/9315}{2015}}
}

\bib{FareyGraphChar}{article}{
    title={The Farey graph is uniquely determined by its connectivity},
    author={J.~Kurkofka},
    year={2020},
    note={Submitted},
    eprint={2006.12472}
}

\bib{OrderCompatiblePaths}{article}{
    title={Ubiquity and the Farey graph},
    author={J.~Kurkofka},
    year={2019},
    note={Submitted},
    eprint={1912.02147}
}

\bib{OporowskiOxleyThomas}{article}{
    author = {B. Oporowski and J. Oxley and R. Thomas},
    title = {Typical Subgraphs of 3- and 4-connected Graphs},
    journal = {J.~Combin.\ Theory (Series B)},
    volume = {57},
    number = {2},
    year = {1993},
    pages = {239--257},
    doi = {10.1006/jctb.1993.1019},
    publisher = {Academic Press, Inc.},
    review={\MR{MR1207490}}
} 

\bib{ExcludingInfiniteMinors}{article}{
    title = {Excluding infinite minors},
    journal = {Disc.\ Math.},
    volume = {95},
    number = {1},
    pages = {303--319},
    year = {1991},
    author = {N.~Robertson and P.D.~Seymour and R.~Thomas},
    review={\MR{MR1141945}},
    doi={10.1016/0012-365X(91)90343-Z}
}

\bib{ExcludingInfiniteCliques}{article}{
    title = {Excluding subdivisions of infinite cliques},
    journal = {Trans.\ Amer.\ Math.\ Soc.},
    volume = {332},
    pages = {211--223},
    year = {1992},
    author = {N.~Robertson and P.D.~Seymour and R.~Thomas},
    review={\MR{1079057}},
    doi={10.1090/S0002-9947-1992-1079057-3}
}



\bib{ExcludingInfiniteTrees}{article}{
    title={Excluding infinite trees},
    author = {P.D.~Seymour and R.~Thomas},
    journal = {Trans.\ Amer.\ Math.\ Soc.},
    year = {1993},
    pages = {597--630},
    volume={335},
    doi = {10.1090/S0002-9947-1993-1079058-6},
    review = {\MR{1079058}}
}

\end{biblist}
\end{bibdiv}
\end{document}